\documentclass[11pt, DIV=15]{scrartcl}
\usepackage[T1]{fontenc}
\usepackage[english]{babel}
\usepackage{amsmath, amsfonts, amsthm, amssymb, mathtools, stmaryrd}
\usepackage[pdfusetitle,hidelinks]{hyperref}
\usepackage[capitalise,noabbrev]{cleveref}
\usepackage{orcidlink}
\usepackage{csquotes}
\usepackage[osf,sc]{mathpazo}
\usepackage{relsize}
\usepackage{microtype}
\usepackage{multicol}
\usepackage{pdfpages}

\usepackage{thm-restate}
\linespread{1.06}
\recalctypearea

\usepackage{graphicx}

\usepackage{abstract}
\usepackage[inline]{enumitem}
\usepackage{booktabs}
\usepackage{todonotes}

\usepackage{tikz}
\usetikzlibrary{cd}
\usepackage{microtype}

\theoremstyle{definition}
\newtheorem{definition}{Definition}

\theoremstyle{plain}
\newtheorem{lemma}[definition]{Lemma}
\newtheorem{example}[definition]{Example}
\newtheorem{corollary}[definition]{Corollary}
\newtheorem{theorem}[definition]{Theorem}

\newtheorem{fact}[definition]{Fact}
\Crefname{fact}{Fact}{Facts}

\theoremstyle{remark}
\newtheorem{claim}{Claim}
\Crefname{claim}{Claim}{Claims}
\newenvironment{claimproof}[1][Proof of Claim]{\begin{proof}[#1] }{ \end{proof}}
\setlist[enumerate, 1]{font=\upshape, noitemsep, nolistsep}
\setlist[enumerate, 2]{font=\upshape, noitemsep, nolistsep}
\setlist[itemize, 1]{noitemsep, nolistsep,font=\upshape}
\setlist[itemize, 2]{noitemsep, nolistsep,font=\upshape}

\usepackage{amsmath, amsfonts, amsthm, amssymb, mathtools, stmaryrd}

\usepackage[hidelinks]{hyperref}
\usepackage[capitalise]{cleveref}

\usepackage{thm-restate}
\usepackage{relsize}

\usepackage{csquotes}
\usepackage{tikz}

\DeclareMathOperator{\spn}{span}

\newcommand{\HomInd}{\textup{\textsc{HomInd}}}
\newcommand{\Mem}{\textup{\textsc{NotIn}}}

\renewcommand\phi\varphi

\DeclareMathOperator{\cl}{cl}

\renewcommand\epsilon\varepsilon

\title{Logical~Equivalences, Homomorphism~Indistinguishability, and Forbidden~Minors\footnote{This work was presented at the \textit{48th International Symposium on Mathematical Foundations of Computer Science} (\textsmaller{MFCS}~2023)~\cite{seppelt_logical_2023}.} } 

\author{Tim Seppelt \orcidlink{0000-0002-6447-0568} \\ \small RWTH Aachen University \\\small
	\href{mailto:seppelt@cs.rwth-aachen.de}{\texttt{seppelt@cs.rwth-aachen.de}}}

\bibliographystyle{plainurl}

\begin{document}
	\maketitle
	\begin{abstract}
		Two graphs $G$ and $H$ are \emph{homomorphism indistinguishable} over a class of graphs~$\mathcal{F}$ if for all graphs $F \in \mathcal{F}$ the number of homomorphisms from~$F$ to~$G$ is equal to the number of homomorphisms from~$F$ to~$H$.
		Many natural equivalence relations comparing graphs such as (quantum) isomorphism, spectral, and logical equivalences can be characterised as homomorphism indistinguishability relations over certain graph classes.
		
		Abstracting from the wealth of such instances, we show in this paper that equivalences w.r.t.\@ any \emph{self-complementarity} logic admitting a characterisation as homomorphism indistinguishability relation can be characterised by homomorphism indistinguishability over a minor-closed graph class.
		Self-complementarity is a mild property satisfied by most well-studied logics.
		This result follows from a correspondence between closure properties of a graph class and preservation properties of its homomorphism indistinguishability relation.
		
		Furthermore, we classify all graph classes which are in a sense finite (\emph{essentially profinite}) and satisfy the maximality condition of being \emph{homomorphism distinguishing closed}, i.e.\@ adding any graph to the class strictly refines its homomorphism indistinguishability relation.
		Thereby, we answer various questions raised by Roberson~(2022) on general properties of the homomorphism distinguishing closure.
\end{abstract}

	\section{Introduction}
	\label{sec:intro}

	In 1967, Lovász~\cite{lovasz_operations_1967} proved that two graphs $G$ and $H$ are isomorphic if and only if they are \emph{homomorphism indistinguishable} over all graphs, i.e.\@ for every graph $F$, the number of homomorphisms from~$F$ to~$G$ is equal to the number of homomorphisms from~$F$ to~$H$.
	Since then, homomorphism indistinguishability over restricted graph classes has emerged as a powerful framework for capturing a wide range of equivalence relations comparing graphs.
	For example, two graphs have cospectral adjacency matrices iff they are homomorphism indistinguishable over all cycles, cf.\@ \cite{dell_lovasz_2018}.
	They are quantum isomorphic iff they are homomorphism indistinguishable over all planar graphs \cite{mancinska_quantum_2019}.

	Most notably, equivalences with respect to many logic fragments can be characterised as homomorphism indistinguishability relations over certain graph classes \cite{grohe_counting_2020,dawar_lovasz-type_2021,montacute_pebble_2022,rattan_weisfeiler_2023}. 
	For example, two graphs satisfy the same sentences of $k$-variable counting logic iff they are homomorphism indistinguishable over graphs of treewidth less than $k$ \cite{dvorak_recognizing_2010}. 
	All graph classes featured in such characterisations are minor-closed and hence of a particularly enjoyable structure.
	The main result of this paper asserts that this is not a mere coincidence:
	In fact, logical equivalences and homomorphism indistinguishability over minor-closed graph classes are intimately related. 
	
	To make this statement precise, the term `logic' has to be formalised.	
	Following~\cite{barwise_extended_2017}, a \emph{logic on graphs} is a pair $(\mathsf{L}, \models)$ of a class $\mathsf{L}$ of \emph{sentences} and an isomorphism-invariant\footnote{For all $\phi \in \mathsf{L}$ and graphs $G$ and $H$ such that $G \cong H$, it holds that $G \models \phi$ iff $H \models \phi$.} \emph{model relation} $\models$ between graphs and sentences. 
	Two graphs $G$ and $H$ are \emph{$\mathsf{L}$-equivalent} if $G \models \phi$ iff $ H \models \phi$ for all $\phi \in \mathsf{L}$.
	One may think of a logic on graphs as a collection of isomorphism-invariant graph properties.
	We call a logic \emph{self-complementary} if for every $\phi \in \mathsf{L}$ there is an element $\overline{\phi} \in \mathsf{L}$ such that $G \models \phi$ if and only if $\overline{G} \models \overline{\phi}$.
	Here, $\overline{G}$ denotes the complement graph of $G$.
	Roughly speaking, a fragment/extension $\mathsf{L}$ of first-order logic is self-complementary if expressions of the form $Exy$ can be replaced by $\neg Exy \land (x \neq y)$ in every formula while remaining in $\mathsf{L}$.
	This lax requirement is satisfied by many logics including first-order logic, counting logic, second-order logic, fixed-point logics, and bounded variable, quantifier depth, or quantifier prefix fragments of these.
	All these examples are subject to the following result:
	
	\begin{theorem}[restate=thmLogic,name=]
		\label{thm:main1}
		Let $(\mathsf{L}, \models)$ be a self-complementary logic on graphs for which there exists a graph class
		$\mathcal{F}$ such that two graphs $G$ and $H$ are homomorphism indistinguishable over $\mathcal{F}$ if and only if they are $\mathsf{L}$-equivalent.
		Then there exists a minor-closed graph class $\mathcal{F}'$ whose homomorphism indistinguishability relation coincides with $\mathsf{L}$-equivalence.
	\end{theorem}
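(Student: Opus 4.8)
The plan is to take $\mathcal F'$ to be the \emph{homomorphism distinguishing closure} of $\mathcal F$, that is,
\[
  \mathcal F' \;:=\; \cl(\mathcal F) \;:=\; \bigl\{\, F \;:\; \hom(F,G)=\hom(F,H)\ \text{whenever $G$ and $H$ are homomorphism indistinguishable over $\mathcal F$} \,\bigr\}.
\]
Since $\mathcal F\subseteq\cl(\mathcal F)$ and, by construction, homomorphism indistinguishability over $\cl(\mathcal F)$ is the same relation $\equiv_{\mathcal F}$ as over $\mathcal F$, it suffices to show that $\cl(\mathcal F)$ is minor-closed. The argument then has two halves: first turn self-complementarity of $\mathsf L$ into a preservation property of $\equiv_{\mathcal F}$, then turn that into a closure property of $\cl(\mathcal F)$ --- this is the ``correspondence between closure properties and preservation properties'' alluded to in the abstract.

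For the first half: applying the clause defining self-complementarity to $\phi$ and then to $\overline\phi$ shows that $\overline{\overline\phi}$ holds in exactly the same graphs as $\phi$, so $\{\overline\phi : \phi\in\mathsf L\}$ exhausts $\mathsf L$ up to logical equivalence; together with $G\models\phi\iff\overline G\models\overline\phi$ this gives that $G$ and $H$ are $\mathsf L$-equivalent iff $\overline G$ and $\overline H$ are. Via the hypothesised characterisation, $\equiv_{\mathcal F}$ is therefore \emph{complementation-invariant}: $G\equiv_{\mathcal F}H$ iff $\overline G\equiv_{\mathcal F}\overline H$.

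For the second half the main tool is the inclusion--exclusion expansion of $\hom(F,\overline G)$: writing, for $f\colon V(F)\to V(G)$ and an edge $uv$ of $F$, the indicator of ``$f(u)f(v)\in E(\overline G)$'' as $1-\mathbb 1[f(u)f(v)\in E(G)]-\mathbb 1[f(u)=f(v)]$ and multiplying out over all edges yields
\[
  \hom(F,\overline G)\;=\;\sum_{A,\,B}(-1)^{|A|+|B|}\,\hom(M_{A,B},G),
\]
the sum over disjoint $A,B\subseteq E(F)$, where $M_{A,B}$ is the minor of $F$ obtained by deleting the edges outside $A\cup B$ and contracting those in $B$. The term $A=E(F)$, $B=\varnothing$ contributes $(-1)^{|E(F)|}\hom(F,G)$, every other $M_{A,B}$ has strictly fewer edges, and the family $\{M_{A,B}\}$ contains $F-e$ and $F/e$ for every edge $e$. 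Thus if $F\in\cl(\mathcal F)$ then $G\mapsto\hom(F,\overline G)$ is $\equiv_{\mathcal F}$-invariant by complementation-invariance, and hence so is the signed sum over the non-leading terms (the leading term being $\equiv_{\mathcal F}$-invariant already). By induction on the number of edges, together with the classical fact that the functions $\hom(F',\cdot)$ for pairwise non-isomorphic $F'$ are linearly independent as graph parameters, one extracts that each $\hom(M_{A,B},\cdot)$ is $\equiv_{\mathcal F}$-invariant; in particular $\cl(\mathcal F)$ is closed under edge deletion and edge contraction. Finally $\cl(\mathcal F)$ is always closed under disjoint union, because $\hom(F_1\sqcup F_2,\cdot)=\hom(F_1,\cdot)\hom(F_2,\cdot)$ and products of $\equiv_{\mathcal F}$-invariant parameters are again such; deleting all edges from a member of $\cl(\mathcal F)$ puts some $\overline K_n$ with $n\ge1$ into $\cl(\mathcal F)$, so $|V(\cdot)|=\hom(\overline K_n,\cdot)^{1/n}$ is $\equiv_{\mathcal F}$-invariant and $K_1\in\cl(\mathcal F)$; and for $F=F'\sqcup K_1\in\cl(\mathcal F)$ one gets $\hom(F',\cdot)=\hom(F,\cdot)/|V(\cdot)|\in\cl(\mathcal F)$, so $\cl(\mathcal F)$ is closed under vertex deletion as well. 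Hence $\cl(\mathcal F)$ is minor-closed and $\mathcal F':=\cl(\mathcal F)$ is as required (the degenerate case where $\mathcal F$ contains no graph with a vertex, making $\equiv_{\mathcal F}$ the total relation, is trivial).

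The step I expect to be the main obstacle is the ``extraction'' in the previous paragraph: a priori, a signed sum $\sum_j c_j\hom(M_j,\cdot)$ being $\equiv_{\mathcal F}$-invariant does not force the individual summands to be, so minor-closedness cannot simply be read off the complement expansion. Pushing the induction through requires exploiting more than linear algebra --- concretely, the triangularity of the complementation operator with respect to the edge-count filtration together with the closure of $\cl(\mathcal F)$ under disjoint union, i.e.\ the fact that the $\equiv_{\mathcal F}$-invariant quantum graphs form a subalgebra of the graph algebra that is stable under $\overline{(\,\cdot\,)}$. Carrying out this disentanglement carefully is the technical heart of the theorem.
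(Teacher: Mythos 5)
Your overall route is exactly the paper's: set $\mathcal F' := \cl(\mathcal F)$, use self-complementarity of $\mathsf L$ to show $\equiv_{\mathcal F}$ is preserved under complements, expand $\hom(F,\overline G)$ by inclusion--exclusion into a signed sum over minors of $F$, and conclude that $\cl(\mathcal F)$ is closed under edge deletion and edge contraction (then vertex deletion). Your first half is correct, and your expansion
\[
\hom(F,\overline G) \;=\; \sum_{A,\,B}(-1)^{|A|+|B|}\,\hom(M_{A,B},G)
\]
is the same as the paper's \cref{eq:del-contr} under the change of variables $F' = (V(F), A\cup B)$, $L = B$.

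The gap is precisely where you flag it: the ``extraction.'' Your speculation about what makes it go through --- triangularity of the complement operator in the edge-count filtration, and stability of $\equiv_{\mathcal F}$-invariant parameters under disjoint union --- is not the right mechanism and would not close the gap as stated. Triangularity shows $\overline{(\cdot)}$ is an invertible operator on the graph algebra, but it does not let you conclude that a $\equiv_{\mathcal F}$-determined linear combination has $\equiv_{\mathcal F}$-determined summands; and the subalgebra property under disjoint union is about products $\hom(F_1,-)\hom(F_2,-)$, which is orthogonal to the issue. The tool the paper uses (\cref{lem:lincomb}, going back to Curticapean--Dell--Marx) is different: if $\sum_L \alpha_L\hom(L,-)$ with $\alpha_L\neq 0$ is $\equiv_{\mathcal F}$-determined, then since $G\equiv_{\mathcal F}H$ implies $G\times K\equiv_{\mathcal F}H\times K$ for every $K$, one gets a whole system $\sum_L \hom(L,K)\,\alpha_L\hom(L,G)=\sum_L \hom(L,K)\,\alpha_L\hom(L,H)$ indexed by $K$; the Lovász matrix $(\hom(K,L))_{K,L}$ over graphs on at most $\max_L |V(L)|$ vertices is invertible, so one can solve for each $\alpha_L\hom(L,-)$ individually. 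That uses closure of $\equiv_{\mathcal F}$ under categorical products $-\times K$, not disjoint unions, and it is this --- not an induction on edges --- that disentangles the sum.

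There is a second, smaller point you would still need to settle: after grouping isomorphic $M_{A,B}$, you must check that the coefficient of the particular minor you want (say $F/e$) is actually non-zero, since different $(A,B)$ can yield isomorphic minors and the signs can cancel. For edge deletion this is easy (only $(A,B)=(E(F)\setminus\{e\},\varnothing)$ gives $F-e$). For edge contraction the paper first deletes the edges to common neighbours of the endpoints of $e$ so that $|E(F/e)|=|E(F)|-1$, and then counts edges to show the only contributing indices are $(E(F)\setminus\{e'\},\{e'\})$ for edges $e'$, all contributing the same sign $(-1)^{|E(F)|}$. Without this normalisation the non-vanishing of the coefficient is not automatic. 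With \cref{lem:lincomb} and this coefficient check in hand, the rest of your argument (including the vertex-deletion bootstrap via $nK_1$) is correct.
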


	\Cref{thm:main1} can be used to rule out that a given logic has a homomorphism indistinguishability characterisation (\cref{cor:complement}).
	Furthermore, it allows to use deep results of graph minor theory to study the expressive power of logics on graphs (\cref{cor:quantum}).
	
	\cref{thm:main1} is product of a more fundamental study of the properties of homomorphism indistinguishability relations. 
	In several instances, we show that closure properties of a graph class~$\mathcal{F}$ correspond to preservation properties of its homomorphism indistinguishability relation~$\equiv_{\mathcal{F}}$.
	These efforts yield answers to several open questions from \cite{roberson_oddomorphisms_2022}.
	A prototypical result is \cref{thm:main2}, from which \cref{thm:main1} follows. 
	For further results in the same vein, see \cref{tab:overview}.
	
	\begin{table}
		\centering
		\begin{tabular}{| l l l |}
			\hline
			\textbf{Closure property of $\mathcal{F}$} & \textbf{Preservation property of $\equiv_{\mathcal{F}}$} & \textbf{Theorem}\\ \hline
			taking minors   & complements      & \cref{thm:complement} \\  
			taking summands & disjoint unions   & \cref{thm:taking-summands} \\
			taking subgraphs& full complements & \cref{thm:full-complement} \\
			taking induced subgraphs & left lexicographic products  & \cref{prop:lexprod-indsub} \\ 
			contracting edges & right lexicographic products & \cref{prop:lexprod-contract} \\
			\hline
		\end{tabular}
		
		\caption{Overview of results on equivalent properties of a homomorphism distinguishing closed graph class $\mathcal{F}$ and of its homomorphism indistinguishability relation $\equiv_{\mathcal{F}}$.}
		\label{tab:overview}
	\end{table}
	
	\begin{theorem} \label{thm:main2}
		Let $\mathcal{F}$ be a homomorphism distinguishing closed graph class.
		Then $\mathcal{F}$ is minor-closed if and only if $\equiv_{\mathcal{F}}$ is preserved under taking complements, i.e.\@ for all simple graphs $G$ and $H$ it holds that $G \equiv_{\mathcal{F}} H$ if and only if $\overline{G} \equiv_{\mathcal{F}} \overline{H}$.
	\end{theorem}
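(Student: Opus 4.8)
Throughout, $\hom(\gamma,G)$ denotes homomorphism counting extended linearly in $\gamma$ to the $\mathbb Q$-vector space $\mathcal Q$ with basis the isomorphism types of graphs, and $[J]:=0$ in $\mathcal Q$ whenever $J$ carries a loop. The engine of the proof is an inclusion--exclusion identity for counting homomorphisms into a complement. Substituting $\mathbf 1[\{f(u),f(v)\}\in E(\overline G)]=\mathbf 1[f(u)\neq f(v)]-\mathbf 1[\{f(u),f(v)\}\in E(G)]$ into $\hom(F,\overline G)=\sum_{f\colon V(F)\to V(G)}\prod_{uv\in E(F)}\mathbf 1[\{f(u),f(v)\}\in E(\overline G)]$, expanding the product over $E(F)$, and resolving the resulting disequalities by a further inclusion--exclusion yields
\[
\hom(F,\overline G)=\hom(\widetilde F,G),\qquad
\widetilde F:=\sum_{\substack{S,R\subseteq E(F)\\ S\cap R=\emptyset}}(-1)^{|S|+|R|}\bigl[(V(F),S\cup R)/R\bigr].
\]
The quantum graph $\widetilde F$ is supported on minors of $F$, its coefficient at $F$ itself equals $(-1)^{|E(F)|}$, and (since $\overline{\overline G}=G$ and homomorphism counts into all graphs separate points of $\mathcal Q$) the operator $\widetilde{(\cdot)}$ is a linear involution on $\mathcal Q$. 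This already settles the forward implication, with no appeal to homomorphism distinguishing closedness: if $\mathcal F$ is minor-closed, then for each $F\in\mathcal F$ every graph in $\operatorname{supp}\widetilde F$ lies in $\mathcal F$, so $\hom(F,\overline G)$ is a fixed integer combination of the numbers $\hom(F',G)$ with $F'\in\mathcal F$; hence $G\equiv_{\mathcal F}H$ implies $\overline G\equiv_{\mathcal F}\overline H$, and applying this to $\overline G,\overline H$ gives the equivalence.

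For the converse, let $\mathcal F$ be homomorphism distinguishing closed with $\equiv_{\mathcal F}$ complement-preserving, and fix $F\in\mathcal F$. The functional $G\mapsto\hom(F,\overline G)=\hom(\widetilde F,G)$ is constant on $\equiv_{\mathcal F}$-classes, since $G\equiv_{\mathcal F}H$ forces $\overline G\equiv_{\mathcal F}\overline H$ and hence $\hom(F,\overline G)=\hom(F,\overline H)$. Because $\mathcal F$ is homomorphism distinguishing closed, a quantum graph with this property is supported on $\mathcal F$, so $\widetilde F\in\mathcal Q(\mathcal F):=\operatorname{span}\mathcal F$; as the graphs form a basis of $\mathcal Q$, every graph appearing in $\widetilde F$ already lies in $\mathcal F$. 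Two terms of $\widetilde F$ now do a lot of work. The graph $F\setminus e$ appears in it with coefficient $\pm$ the number of edges $e'$ with $F\setminus e'\cong F\setminus e$ --- only terms with $R=\emptyset$ can yield a graph on $|V(F)|$ vertices --- which is nonzero; so $\mathcal F$ is closed under edge deletion. The edgeless graph on $|V(F)|$ vertices appears with coefficient $1$ (the term $S=R=\emptyset$), so $\equiv_{\mathcal F}$ determines the number of vertices, unless $\mathcal F=\{K_0\}$, which is trivially minor-closed. Combined with $\hom(F,G)=|V(G)|\cdot\hom(F\setminus v,G)$ for an isolated vertex $v$ and with the fact that $\mathcal F$ is homomorphism distinguishing closed, this last point gives closure under deleting isolated vertices.

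It remains to obtain closure under edge contraction, where the signs must be managed. Reindexing by $A=S\cup R$ rewrites the identity as $\widetilde F=\sum_{A\subseteq E(F)}(-1)^{|A|}\Phi\bigl((V(F),A)\bigr)$ with $\Phi(J):=\sum_{R\subseteq E(J)}[J/R]$. The key point is that $\Phi(J)$ is \emph{cancellation-free}: being a sum of graphs it has nonnegative coefficients, and for every edge $e$ the loopless graph $J/e$ occurs in it (take $R=\{e\}$). Now induct on $|E(F)|$. For $A\subsetneq E(F)$ the graph $(V(F),A)$ lies in $\mathcal F$ (iterated edge deletion) and has fewer edges, so the inductive hypothesis --- which, together with the already-established closure under edge and isolated-vertex deletion, yields full minor-closedness for graphs in $\mathcal F$ with fewer edges --- puts all minors of $(V(F),A)$, in particular all graphs in the support of $\Phi((V(F),A))$, into $\mathcal F$; thus $\Phi((V(F),A))\in\mathcal Q(\mathcal F)$. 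Since also $\widetilde F\in\mathcal Q(\mathcal F)$, solving the identity for its top term gives
\[
\Phi(F)=(-1)^{|E(F)|}\Bigl(\widetilde F-\sum_{A\subsetneq E(F)}(-1)^{|A|}\Phi\bigl((V(F),A)\bigr)\Bigr)\in\mathcal Q(\mathcal F),
\]
so by cancellation-freeness every graph occurring in $\Phi(F)$ --- in particular $F/e$ for each $e\in E(F)$ --- lies in $\mathcal F$. Closure under edge deletion, isolated-vertex deletion and edge contraction together amount to minor-closedness, completing the proof.

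I expect the converse direction to be the main obstacle, and two points carry the weight. First, the step in the second paragraph relies on the fact that for a homomorphism distinguishing closed $\mathcal F$ a quantum graph whose homomorphism functional is constant on $\equiv_{\mathcal F}$-classes must be supported on $\mathcal F$ --- that the homomorphism distinguishing closure is stable under the linear structure of $\mathcal Q$; this is precisely what lets the single relation $\widetilde F\in\mathcal Q(\mathcal F)$ be read off coefficient by coefficient, and a purely signed inclusion--exclusion would not give it. Second, the reorganisation $\widetilde F=\sum_A(-1)^{|A|}\Phi((V(F),A))$ together with the cancellation-freeness of $\Phi$ is the real idea behind the contraction case: in the raw alternating sum the coefficient of a given contracted minor could vanish, and it is only after inductively peeling off the proper-subgraph contributions that the remainder becomes a sign-free sum over edge contractions from which contraction-closure can be extracted. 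The residual bookkeeping --- that the three closure properties genuinely yield minor-closedness (a secondary induction on the number of vertices absorbs the isolated-vertex deletions) and the degenerate case $\mathcal F=\{K_0\}$ --- is routine.
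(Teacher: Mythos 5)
Your proposal is correct, and the forward direction, the expansion identity, the edge-deletion case, and the isolated-vertex case all follow essentially the same route as the paper (your $\widetilde F=\sum_{S,R\text{ disjoint}}(-1)^{|S|+|R|}[(V(F),S\cup R)/R]$ is, after reindexing by $A=S\cup R$ and $L=R$, exactly the paper's equation~\eqref{eq:del-contr}, obtained there by composing \cref{eq:complement} with \cref{lem:looping}, and both proofs then invoke the mechanism of \cref{lem:lincomb} to conclude that the support of the resulting quantum graph lies in $\cl(\mathcal F)=\mathcal F$). Where you genuinely diverge is the edge-contraction case, and here your argument is cleaner. The paper works directly with the raw alternating sum and has to pin down the coefficient of a contracted graph $K=F/e$; to do so it first reduces WLOG to the situation where no vertex of $F$ is adjacent to both endpoints of $e$, and then proves a small lemma (\cref{obs:edges-contract}, a triangle count via handshaking) to show that the only contributing indices are $(F,\{e'\})$ with $F/e'\cong K$, each with the same sign $(-1)^{|E(F)|}$. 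You instead peel the expansion by $A=S\cup R$ to isolate $\Phi(F)=\sum_{R\subseteq E(F)}[F/R]$, observe that $\Phi$ is manifestly sign-free so every contracted minor $F/e$ occurs with positive coefficient, and recover $\Phi(F)\in\spn(\mathcal F)$ from $\widetilde F\in\spn(\mathcal F)$ by induction on $|E(F)|$, using edge-deletion closure to push the proper-subgraph terms $\Phi((V(F),A))$, $A\subsetneq E(F)$, into $\spn(\mathcal F)$ by the inductive hypothesis. This replaces a delicate coefficient computation with a structural peeling argument and avoids the WLOG step and the triangle lemma entirely; it buys you a conceptually tidier proof at the (minor) cost of an explicit induction and the somewhat less local conclusion that all minors, rather than single contractions, of small-edge-count members of $\mathcal F$ lie in $\mathcal F$. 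Both arguments hinge on the same two points you flag yourself: that $\cl(\mathcal F)=\mathcal F$ lets \cref{lem:lincomb} be read off coefficient by coefficient, and that after grouping by isomorphism type one must verify nonvanishing of coefficients, which is where the sign bookkeeping lives.
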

	
	Here, the graph class $\mathcal{F}$ is \emph{homomorphism distinguishing closed} \cite{roberson_oddomorphisms_2022} if for every $K \not\in \mathcal{F}$ there exist graphs $G$ and $H$ which are homomorphism indistinguishable over $\mathcal{F}$ but differ in the number of homomorphisms from~$K$. In other words, adding even a single graph to $\mathcal{F}$ would change its homomorphism indistinguishability relation.
	
	Proving that a graph class is homomorphism distinguishing closed is a pathway to separating equivalence relations comparing graphs \cite{roberson_lasserre_2023}.
	However, establishing this property is a notoriously hard task.
	Thus, a general result establishing the homomorphism distinguishing closedness of a wide range of graph classes would be desirable.
	In \cite{roberson_oddomorphisms_2022}, Roberson conjectured that \emph{every graph class closed under taking minors and disjoint unions is homomorphism distinguishing closed}.
	At present, this conjecture has only been  verified for few graph classes~\cite{roberson_oddomorphisms_2022,neuen_homomorphism-distinguishing_2023}.

	Our final result confirms Roberson's conjecture for all graph classes which are in a certain sense finite.
	The expressive power of homomorphism counts from finitely many graphs is of particular importance in practice. Applications include the design of graph kernels \cite{KriegeJM20}, motif counting \cite{AlonDHHS08,milo2002network}, or machine learning on graphs \cite{BeaujeanSY21,NguyenM20,Grohe-x2vec}.
	A theoretical interest stems for example from database theory where homomorphism counts correspond to results of queries under bag-semantics \cite{chaudhuri_optimization_1993,kwiecien_determinacy_2022}, see also \cite{chen_algorithms_2022}.
	
	Since every homomorphism distinguishing closed graph class is closed under taking disjoint unions, infinite graph classes arise inevitably when studying homomorphism indistinguishability over finite graph classes.
	We introduce the notions of \emph{essentially finite} and \emph{essentially profinite} graph classes (\cref{def:profinite}) in order to capture the nevertheless limited behaviour of graph classes arising from the finite.
	Examples for essentially profinite graph classes include the class of all minors of a fixed graph and the class of cluster graphs, i.e.\@ disjoint unions of arbitrarily large cliques.
	In \cref{thm:profinite}, the essentially profinite graph classes which are homomorphism distinguishing closed are fully classified. 
	Thereby, the realm of available examples of homomorphism distinguishing closed graph classes is drastically enlarged.
	This classification has the following readily-stated corollary:

	\begin{theorem} \label{thm:main3}
		Every essentially profinite union-closed graph class $\mathcal{F}$ which is closed under taking summands\footnote{A graph class $\mathcal{F}$ is \emph{closed under taking summands} if for all $F \in \mathcal{F}$ which is the disjoint union of two graphs $F_1 + F_2 = F$ also $F_1, F_2 \in \mathcal{F}$.} is homomorphism distinguishing closed.
		In particular, Roberson's conjecture holds for all essentially profinite graph classes.
	\end{theorem}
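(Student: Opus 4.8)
The plan is to derive \cref{thm:main3} from the classification of essentially profinite homomorphism distinguishing closed graph classes in \cref{thm:profinite}, so that the content of this proof is reducing the stated hypotheses to the conditions appearing there. Since $\mathcal{F}$ is homomorphism distinguishing closed exactly when $\mathcal{F}$ equals its homomorphism distinguishing closure $\cl(\mathcal{F})$, and $\mathcal{F} \subseteq \cl(\mathcal{F})$ always holds, it suffices to prove $\cl(\mathcal{F}) \subseteq \mathcal{F}$. First I would record two closure properties of $\cl(\mathcal{F})$. It is always closed under disjoint unions: if $\hom(K_1, -)$ and $\hom(K_2, -)$ are constant on $\equiv_{\mathcal{F}}$-classes, then so is their product $\hom(K_1 + K_2, -)$. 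Moreover, using the hypothesis that $\mathcal{F}$ is closed under taking summands together with the equivalence ``taking summands $\leftrightarrow$ preservation of disjoint unions'' from \cref{thm:taking-summands}, I would show that $\cl(\mathcal{F})$ is closed under taking summands as well: given $K = C_1 + \dots + C_r \in \cl(\mathcal{F})$ with connected components $C_i$ and a hypothetical pair $G \equiv_{\mathcal{F}} H$ with $\hom(C_1, G) \neq \hom(C_1, H)$, one adds $n$ disjoint copies of a fixed graph $M$ to both sides (which preserves $\equiv_{\mathcal{F}}$ since $\mathcal{F}$ is summand-closed), expands $\hom(C_i, G + nM) = \hom(C_i, G) + n\hom(C_i, M)$, and extracts a contradiction from the resulting polynomial identity in $n$.

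Together with union-closedness of $\mathcal{F}$ — which guarantees that any graph all of whose components lie in $\mathcal{F}$ lies in $\mathcal{F}$ — the two closure properties above reduce the inclusion $\cl(\mathcal{F}) \subseteq \mathcal{F}$ to the assertion that every connected graph $K \notin \mathcal{F}$ satisfies $K \notin \cl(\mathcal{F})$. For connected $K$ I would now invoke \cref{thm:profinite}: since $\mathcal{F}$ is union-closed and summand-closed, $\mathcal{F}$ is precisely the class of all finite disjoint unions of its connected members, and essential profiniteness (\cref{def:profinite}) constrains those connected members to the limited ``profinite'' shape required by the classification — the guiding example being the cliques $K_1, K_2, \dots$ generating the class of cluster graphs. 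Checking that these properties supply the hypotheses of \cref{thm:profinite} — with any further technical condition there subsumed by essential profiniteness — then gives that $\mathcal{F}$ is homomorphism distinguishing closed. For the ``in particular'' clause, it remains to observe that a minor-closed graph class is automatically closed under taking summands, because a connected component of $F$ arises from $F$ by deleting the vertices of the other components, a minor operation; hence every minor-closed, union-closed, essentially profinite class is covered by the statement just proved, which is Roberson's conjecture in this setting.

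The genuinely hard step is not this bookkeeping but the input \cref{thm:profinite} itself: in the essentially finite case one may diagonalize against finitely many forbidden connected graphs, but an essentially profinite class has infinitely many connected members even up to homomorphic equivalence, so producing a witnessing pair $G \equiv_{\mathcal{F}} H$ that separates a forbidden connected $K$ while simultaneously equalizing $\hom(F, G)$ and $\hom(F, H)$ for every $F$ in an infinite profinite family demands a compactness or limiting argument exploiting the tower structure of that family rather than mere finiteness. A secondary point requiring care is the polynomial-identity argument above: the auxiliary graph $M$ must be chosen — for instance as a large clique, so that the homomorphism counts $\hom(C_i, K_m)$ from components of distinct orders have distinct degrees in $m$ — so that the identity in $n$ actually forces $\hom(C_1, G) = \hom(C_1, H)$ and does not merely permute components of equal multiplicity.
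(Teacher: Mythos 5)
Your high-level plan — derive the statement from \cref{thm:profinite} — is the right one and matches the paper, but the execution never actually closes the argument. \cref{thm:profinite} is an equivalence; to conclude that $\mathcal{F}$ is homomorphism distinguishing closed you must \emph{verify} one of its three conditions for $\mathcal{F}$, and you never do. You reduce $\cl(\mathcal{F}) \subseteq \mathcal{F}$ to the claim that no connected $K \notin \mathcal{F}$ lies in $\cl(\mathcal{F})$, and then say you would ``invoke \cref{thm:profinite}'' to obtain the \emph{original} conclusion, which makes the reduction step vacuous and leaves the invocation hand-waved (``with any further technical condition there subsumed by essential profiniteness'' does not identify which of \cref{pf1}--\cref{pf3} is being checked, nor how).

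The paper's argument is a one-liner verifying condition \cref{pf2} directly. Suppose $\vec{K} \in \spn\{\vec{F} \in \mathbb{R}^{\Gamma(\mathcal{F}_K \cup \{K\})} \mid F \in \mathcal{F}_K\}$. Since every $\vec{F}$ with $F \in \mathcal{F}_K$ has support inside $\Gamma(\mathcal{F}_K)$, so does any linear combination; hence $\Gamma(K) \subseteq \Gamma(\mathcal{F}_K) \subseteq \Gamma(\mathcal{F})$. Closure under summands gives $\Gamma(\mathcal{F}) \subseteq \mathcal{F}$, and then union-closure gives $K \in \mathcal{F}$ because $K$ is the disjoint union of its connected components, all lying in $\mathcal{F}$. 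That is all; there is no need to establish that $\cl(\mathcal{F})$ is closed under summands, and in particular no need to re-prove the polynomial-identity lemma — that is \cref{thm:taking-summands}, already in the paper, and even that detour is superfluous here. Your concern about choosing the auxiliary graph $M$ and your remarks about how hard \cref{thm:profinite} itself is to prove are beside the point: the theorem is given, and your task is only the reduction. Your treatment of the ``in particular'' clause (minor-closed implies summand-closed because deleting a component's vertices is a minor operation) is correct and matches the paper.
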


	\section{Preliminaries}
	
	All graphs in this article are finite, undirected, and without multiple edges. 
	A \emph{simple graph} is a graph without loops.
	A \emph{homomorphism} from a graph $F$ to a graph $G$ is a map $h \colon V(F) \to V(G)$ such that $h(u)h(v) \in E(G)$ whenever $uv \in E(F)$ and vertices carrying a loop are mapped to vertices carrying a loop. Write $\hom(F, G)$ for the number of homomorphisms from $F$ to $G$. 
	For a class of graphs $\mathcal{F}$ and graphs $G$ and $H$, write $G \equiv_{\mathcal{F}} H$ if $\hom(F, G) = \hom(F, H)$ for all $F \in \mathcal{F}$, i.e.\@ $G$ and $H$ are \emph{homomorphism indistinguishable over $\mathcal{F}$}.
	With the exception of \cref{sec:full-complements}, the graphs in $\mathcal{F}$ as well as $G$ and $H$ will be simple.
	Following \cite{roberson_oddomorphisms_2022}, the \emph{homomorphism distinguishing closure} of $\mathcal{F}$ is
	\[
		\cl(\mathcal{F}) \coloneqq \{K \text{ simple graph} \mid \forall  \text{ simple graphs } G, H.\quad G \equiv_{\mathcal{F}} H \Rightarrow \hom(K, G) = \hom(K, H) \}.
	\]
	Intuitively, $\cl(\mathcal{F})$ is the \enquote{largest} graph class whose homomorphism indistinguishability relation coincides with the one of $\mathcal{F}$.
	A graph class $\mathcal{F}$ is \emph{homomorphism distinguishing closed} if $\cl(\mathcal{F}) = \mathcal{F}$. Note that $\cl$ is a closure operator in the sense that $\cl(\mathcal{F}) \subseteq \cl(\mathcal{F}')$ if $\mathcal{F} \subseteq \mathcal{F}'$ and $\cl(\cl(\mathcal{F})) = \cl(\mathcal{F})$ for all graph classes $\mathcal{F}$ and $\mathcal{F}'$.
	Furthermore, the following inclusions hold (and cannot be reversed, cf.\@ \cref{ex1}):
	
	\begin{lemma} \label{lem:intersection}
		Let $I$ be an arbitrary index set and let $(\mathcal{F}_i)_{i \in I}$ be a family of graph classes.
		Then 
		\[
			\cl\left(\bigcap_{i\in I} \mathcal{F}_i \right) \subseteq \bigcap_{i \in I} \cl(\mathcal{F}_i) \quad \text{and} \quad
			\bigcup_{i\in I} \cl(\mathcal{F}_i) \subseteq \cl\left( \bigcup_{i\in I} \mathcal{F}_i \right).
		\]
	\end{lemma}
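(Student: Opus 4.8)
The plan is to obtain both inclusions as immediate consequences of the monotonicity of $\cl$ recorded just above the statement, namely that $\mathcal{F} \subseteq \mathcal{F}'$ implies $\cl(\mathcal{F}) \subseteq \cl(\mathcal{F}')$. I do not expect any genuine obstacle here; the only thing to watch is the direction in which $\equiv_{\mathcal{F}}$ refines as $\mathcal{F}$ grows.

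For the first inclusion, I would fix an arbitrary $j \in I$, use $\bigcap_{i \in I} \mathcal{F}_i \subseteq \mathcal{F}_j$ together with monotonicity to get $\cl\left(\bigcap_{i \in I} \mathcal{F}_i\right) \subseteq \cl(\mathcal{F}_j)$, and then intersect over all $j \in I$ to conclude $\cl\left(\bigcap_{i \in I} \mathcal{F}_i\right) \subseteq \bigcap_{j \in I} \cl(\mathcal{F}_j)$.

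Symmetrically, for the second inclusion I would fix $j \in I$, use $\mathcal{F}_j \subseteq \bigcup_{i \in I} \mathcal{F}_i$ and monotonicity to get $\cl(\mathcal{F}_j) \subseteq \cl\left(\bigcup_{i \in I} \mathcal{F}_i\right)$, and then take the union over all $j \in I$ to conclude $\bigcup_{j \in I} \cl(\mathcal{F}_j) \subseteq \cl\left(\bigcup_{i \in I} \mathcal{F}_i\right)$.

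If one wishes to make the argument self-contained rather than citing the closure-operator remark, the same proof works after first noting the elementary fact underlying monotonicity: whenever $\mathcal{F} \subseteq \mathcal{F}'$, the defining conditions of $G \equiv_{\mathcal{F}} H$ form a subset of those of $G \equiv_{\mathcal{F}'} H$, so $G \equiv_{\mathcal{F}'} H$ implies $G \equiv_{\mathcal{F}} H$ for all simple graphs $G, H$; substituting this implication into the definition of membership in $\cl(\cdot)$ reproduces the two chains of inclusions above. The reverse inclusions fail in general, but establishing that is the job of the examples following the lemma, not of this proof.
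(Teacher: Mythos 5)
Your proof is correct, and it differs from the paper's for the first inclusion. You derive $\cl\bigl(\bigcap_{i} \mathcal{F}_i\bigr) \subseteq \bigcap_{j} \cl(\mathcal{F}_j)$ purely from monotonicity: $\bigcap_i \mathcal{F}_i \subseteq \mathcal{F}_j$ for each $j$, hence $\cl\bigl(\bigcap_i \mathcal{F}_i\bigr) \subseteq \cl(\mathcal{F}_j)$, and then intersect over $j$. The paper instead first invokes Roberson's result that an arbitrary intersection of homomorphism distinguishing closed classes is again closed, observes $\bigcap_i \mathcal{F}_i \subseteq \bigcap_i \cl(\mathcal{F}_i)$, and concludes using that the right-hand side is closed. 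Your route is more elementary, needing nothing beyond the monotonicity of $\cl$ already stated in the text; the paper's route additionally imports a nontrivial structural fact (closedness under arbitrary intersections), which gives the stronger byproduct that $\bigcap_i \cl(\mathcal{F}_i)$ is itself homomorphism distinguishing closed — useful information, though not needed for the inclusion itself. For the second inclusion your argument coincides with the paper's.
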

	\begin{proof}
		Any intersection of homomorphism distinguishing closed graph classes is homomorphism distinguishing closed \cite[Lemma~6.1]{roberson_oddomorphisms_2022}. Hence, $\bigcap_{i \in I} \cl(\mathcal{F}_i)$ is closed and it suffices to observe that $\bigcap_{i \in I} \mathcal{F}_i \subseteq \bigcap_{i \in I} \cl(\mathcal{F}_i)$.
		For the second claim, $\mathcal{F}_j \subseteq \bigcup_{i\in I} \mathcal{F}_i$ and hence $\cl(\mathcal{F}_j) \subseteq \cl(\bigcup_{i\in I} \mathcal{F}_i)$ for $j\in I$.
	\end{proof}
	
	For graphs $F$ and $K$, $F$ is said to be \emph{$K$-colourable} if there is a homomorphisms $F \to K$. $F$ and $K$ are \emph{homomorphically equivalent} if there are homomorphisms $F \to K$ and $K \to F$.
	
	For graphs $G$ and $H$, write $G + H$ for their \emph{disjoint union} and $\coprod_{i \in [n]} G_i \coloneqq G_1 + \dots + G_n$ for graphs $G_1, \dots, G_n$.
We write $G \times H$ for the \emph{categorical product} of $G$ and $H$, i.e.\@ $V(G \times H) \coloneqq V(G) \times V(H)$ and $gh$ and $g'h'$ are adjacent in $G \times H$ iff $gg' \in E(G)$ and $hh' \in E(H)$. The \emph{lexicographic product} $G \cdot H$ is defined as the graph with vertex set $V(G) \times V(H)$ and edges between $gh$ and $g'h'$ iff $g = g'$ and $hh' \in E(H)$ or $gg' \in E(G)$. It is well-known, cf.\@ e.g.\@ \cite[(5.28)--(5.30)]{lovasz_large_2012}, that
	for all graphs $F_1, F_2, G_1, G_2$, and all connected graphs $K$, the following equalities hold:
	\begin{align}
		\hom(F_1 + F_2, G) &= \hom(F_1, G) \hom(F_2, G), \label{eq:coproduct} \\
		\hom(F, G_1 \times G_2) &= \hom(F, G_1) \hom(F, G_2), \text{ and }\label{eq:product} \\
		\hom(K, G_1 + G_2) &= \hom(K, G_1) + \hom(K, G_2). \label{eq:disjoint}
	\end{align}
		
	The \emph{complement} of a simple graph $F$ is the simple graph $\overline{F}$ with $V(\overline{F}) = V(F)$ and $E(\overline{F}) = \binom{V(F)}{2} \setminus E(F)$.
	Observe that $\overline{\overline{F}} = F$ for all simple graphs $F$.
	The \emph{full complement} of a  graph $G$ is the graph $\widehat{G}$ obtained from $G$ by replacing every edge with a non-edge and every loop with a non-loop, and vice-versa.
	
	The \emph{quotient} $F/\mathcal{P}$ of a simple graph $F$ by a partition $\mathcal{P}$ of $V(F)$ is the simple graph with vertex set $\mathcal{P}$ and edges $PQ$ for $P \neq Q$ iff there exist vertices $p \in P$ and $q \in Q$ such that $pq \in E(F)$. 
For a set $X$, write $\Pi(X)$ for the set of all partitions of $X$.	
	A graph $F'$ can be obtained from a simple graph $F$ by \emph{contracting edges} if there is a partition $\mathcal{P} \in \Pi(V(F))$ such that $F[P]$ is connected for all $P \in \mathcal{P}$ and $F' \cong F/\mathcal{P}$.
	
	For a graph $F$ and a set $P \subseteq V(F)$, write $F[P]$ for the \emph{subgraph induced by $P$}, i.e.\@ the graph with vertex set $P$ and edges $uv$ if $u,v \in P$ and $uv \in E(F)$.
	A graph $F'$ is a \emph{subgraph} of $F$, in symbols $F' \subseteq F$ if $V(F') \subseteq V(F)$ and $E(F') \subseteq E(F)$.
	A \emph{minor} of a simple graph $F$ is a subgraph of a graph which can be obtained from $F$ by contracting edges. 
	
	\begin{fact}[Inclusion--Exclusion] \label{fact:incl-excl}
		Let $A_1, \dots, A_n$ be a subsets of a finite set $S$. For $A \subseteq S$, write $\overline{A} \coloneqq S \setminus A$. Then
		\[
		\left\lvert \bigcap_{i \in [n]} \overline{A_i} \right\rvert = |S| +  \sum_{\emptyset \neq I \subseteq [n]} (-1)^{|I|} \left\lvert \bigcap_{i \in I} A_i \right\rvert.
		\]
	\end{fact}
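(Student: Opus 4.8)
The plan is to reduce the claim to the standard inclusion--exclusion principle via De Morgan's laws, and to prove the latter by a pointwise indicator argument. First I would observe that $\bigcap_{i \in [n]} \overline{A_i} = S \setminus \bigcup_{i \in [n]} A_i$, so the left-hand side equals $|S| - \lvert \bigcup_{i \in [n]} A_i \rvert$. It then suffices to show
\[
	\left\lvert \bigcup_{i \in [n]} A_i \right\rvert = \sum_{\emptyset \neq I \subseteq [n]} (-1)^{|I|+1} \left\lvert \bigcap_{i \in I} A_i \right\rvert ,
\]
since moving the summands to the other side turns the sign $(-1)^{|I|+1}$ into $(-1)^{|I|}$ and reproduces the displayed identity.

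To prove this I would count contributions of individual elements. Writing $[\,\cdot\,]$ for the Iverson bracket and reading the empty intersection as $S$, every $x \in S$ satisfies
\[
	\prod_{i \in [n]} \bigl( 1 - [x \in A_i] \bigr) = \sum_{I \subseteq [n]} (-1)^{|I|} \prod_{i \in I} [x \in A_i] = \sum_{I \subseteq [n]} (-1)^{|I|} \Bigl[ x \in \textstyle\bigcap_{i \in I} A_i \Bigr] ,
\]
by expanding the product. The left-hand side is the indicator of $\bigcap_{i \in [n]} \overline{A_i}$. Summing over all $x \in S$ and pulling out the $I = \emptyset$ term, which contributes $\sum_{x \in S} 1 = |S|$, yields exactly the asserted formula. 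Equivalently, for a fixed $x$ one may set $J \coloneqq \{ i \in [n] \mid x \in A_i \}$ and use $\sum_{I \subseteq J} (-1)^{|I|} = (1-1)^{|J|}$, which is $1$ if $J = \emptyset$ and $0$ otherwise; this recovers the classical bijective proof of inclusion--exclusion.

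There is essentially no obstacle here: the only point needing a little care is the bookkeeping of the empty-index term $I = \emptyset$, whose value $|S|$ is precisely the additive constant on the right-hand side of the statement and which is easy to drop by accident. Should a fully self-contained argument be preferred, a one-line induction on $n$ works just as well, the inductive step writing $\bigcup_{i \in [n]} A_i = \bigl( \bigcup_{i \in [n-1]} A_i \bigr) \cup A_n$ and applying $|X \cup Y| = |X| + |Y| - |X \cap Y|$ together with the distributive law $\bigl( \bigcup_{i \in [n-1]} A_i \bigr) \cap A_n = \bigcup_{i \in [n-1]} (A_i \cap A_n)$.
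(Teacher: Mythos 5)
Your proof is correct. The paper states this result as an unproved \emph{Fact}, treating it as classical and citing no argument, so there is no in-paper proof to compare against; your indicator-function expansion (together with the observation that the empty-index term contributes $|S|$, and the alternative derivation via De Morgan plus the union form of inclusion--exclusion) is a standard, complete, and accurate proof of the stated identity.
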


	\section{Closure Properties Correspond to Preservation Properties}
	\label{sec:syntax-semantics}

	This section is concerned with the interplay of closure properties of a graph class $\mathcal{F}$ and preservation properties of its homomorphism indistinguishability relation $\equiv_{\mathcal{F}}$. The central results of this section are \cref{thm:main2} and the other results listed in \cref{tab:overview}.  
	
	The relevance of the results is twofold: On the one hand, they yield that if a graph class $\mathcal{F}$ has a certain closure property then so does $\cl(\mathcal{F})$. In the case of minor-closed graph families, this provides evidence for Roberson's conjecture \cite{roberson_oddomorphisms_2022}. On the other hand, they establish that equivalence relations comparing graphs which are preserved under certain operations coincide with the homomorphism indistinguishability relation over a graph class with a certain closure property, if they are homomorphism indistinguishability relations at all. 
	Further consequences are discussed in \cref{sec:applications,sec:logic}.
	Essential to all proofs is the following \cref{lem:lincomb}:
	
	\begin{lemma} \label{lem:lincomb}
		Let $\mathcal{F}$ and $\mathcal{L}$ be classes of simple graphs.
		Suppose $\mathcal{L}$ is finite 
		and that its elements are pairwise non-isomorphic.
		Let $\alpha \colon \mathcal{L} \to \mathbb{R} \setminus \{0\}$. If for all simple graphs $G$ and $H$
		\[
		G \equiv_{\mathcal{F}} H \implies \sum_{L \in \mathcal{L}} \alpha_L \hom(L, G) = \sum_{L \in \mathcal{L}} \alpha_L \hom(L, H)
		\]
		then $\mathcal{L} \subseteq \cl(\mathcal{F})$.
	\end{lemma}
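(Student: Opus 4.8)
The plan is to prove the apparently stronger statement that $\hom(L, G) = \hom(L, H)$ for \emph{every} $L \in \mathcal{L}$ and all simple graphs $G, H$ with $G \equiv_{\mathcal{F}} H$; this is precisely the assertion $\mathcal{L} \subseteq \cl(\mathcal{F})$. Two facts drive the argument: homomorphism counts are multiplicative under the categorical product, which allows me to amplify the hypothesis into infinitely many identities, and the functions $\hom(L, -)$ for pairwise non-isomorphic $L$ are linearly independent over $\mathbb{R}$, which lets me isolate the individual summands.

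I would first record that $\equiv_{\mathcal{F}}$ is preserved under taking categorical products with a fixed simple graph: if $G \equiv_{\mathcal{F}} H$ and $A$ is simple, then $G \times A$ and $H \times A$ are again simple (they have no loops since $G$ and $H$ have none), and \eqref{eq:product} yields $\hom(F, G \times A) = \hom(F, G)\hom(F, A) = \hom(F, H)\hom(F, A) = \hom(F, H \times A)$ for all $F \in \mathcal{F}$, so $G \times A \equiv_{\mathcal{F}} H \times A$. Now fix simple graphs $G \equiv_{\mathcal{F}} H$ and apply the hypothesis to the pair $(G \times A, H \times A)$ for an arbitrary simple graph $A$; using $\hom(L, G \times A) = \hom(L, G)\hom(L, A)$ once more, this gives
\[
	\sum_{L \in \mathcal{L}} \alpha_L \,\hom(L, A)\,\bigl(\hom(L, G) - \hom(L, H)\bigr) = 0
	\qquad \text{for every simple graph } A .
\]
Invoking the linear independence of the functions $A \mapsto \hom(L, A)$ over $\mathbb{R}$ (classical; see, e.g., \cite{lovasz_large_2012}) with the coefficients $\beta_L \coloneqq \alpha_L\bigl(\hom(L, G) - \hom(L, H)\bigr)$, all $\beta_L$ vanish, so $\hom(L, G) = \hom(L, H)$ for every $L$ because $\alpha_L \neq 0$. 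Since $G \equiv_{\mathcal{F}} H$ was arbitrary, $\mathcal{L} \subseteq \cl(\mathcal{F})$.

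The one non-formal ingredient is the linear independence, which I expect to be the point requiring care; I would make it self-contained as follows. Möbius inversion over partitions gives $\hom(L, A) = \sum_{\mathcal{P} \in \Pi(V(L))} \inj(L/\mathcal{P}, A)$ with $\inj$ counting injective homomorphisms, so $\sum_{L \in \mathcal{L}} \beta_L \hom(L, A) = \sum_{Q} \gamma_Q \inj(Q, A)$, the sum ranging over the finite set of isomorphism types $Q$ of quotients of members of $\mathcal{L}$. Ordering these types by number of vertices, then by number of edges, makes the matrix $\bigl(\inj(Q, Q')\bigr)$ triangular with nonzero diagonal entries $\inj(Q, Q) = \lvert \operatorname{Aut}(Q) \rvert$, since an injective homomorphism $Q \to Q'$ forces $\lvert V(Q) \rvert \leq \lvert V(Q') \rvert$, with equality only if $Q$ is a spanning subgraph of $Q'$; evaluating $\sum_{Q} \gamma_Q \inj(Q, A) = 0$ at $A = Q'$ for each type $Q'$ then forces every $\gamma_Q = 0$. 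Finally, an $L \in \mathcal{L}$ arises as a quotient $L'/\mathcal{P}$ of a member $L' \in \mathcal{L}$ only when $L' = L$ with $\mathcal{P}$ discrete, or when $\lvert V(L') \rvert > \lvert V(L) \rvert$, so a downward induction on $\lvert V(L) \rvert$ recovers $\beta_L = 0$ for each $L$. One must only take care that the amplification stays within the class of simple graphs the hypothesis speaks about, which it does, since categorical products and quotients of simple graphs are simple.
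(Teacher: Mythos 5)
Your proof is correct and follows essentially the same route as the paper's: amplify $G \equiv_{\mathcal F} H$ to $G \times A \equiv_{\mathcal F} H \times A$ via \eqref{eq:product}, then conclude from the linear independence of the functions $\hom(L, -)$ for pairwise non-isomorphic $L$ on a bounded number of vertices. The paper invokes this linear independence as the classical invertibility of the matrix $(\hom(K,L))_{K,L}$ indexed by all graphs on at most $n$ vertices (citing Lov\'asz), whereas you re-derive it via the $\hom$--$\inj$ M\"obius transform --- where, to be precise, the sum should range only over partitions of $V(L)$ into independent sets, since the paper's quotient $L/\mathcal{P}$ discards loops --- but this is a self-contained expansion of the same step, not a different argument.
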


	\begin{proof}
		The following argument is due to \cite[Lemma~3.6]{curticapean_homomorphisms_2017}. 
		Let $n$ be an upper bound on the number of vertices of graphs in $\mathcal{L}$ and let $\mathcal{L}'$ denote the class of all graphs on at most $n$~vertices. 
		By classical arguments~\cite{lovasz_operations_1967}, cf.\@ \cite[Proposition~5.44(b)]{lovasz_large_2012}, the matrix $M \coloneqq (\hom(K, L))_{K, L \in \mathcal{L}'}$ is invertible. Extend $\alpha$ to a function $\alpha' \colon \mathcal{L}' \to \mathbb{R}$ by setting $\alpha'(L) \coloneqq \alpha(L)$ for all $L \in \mathcal{L}$ and $\alpha'(L') \coloneqq 0$ for all $L' \in \mathcal{L}' \setminus \mathcal{L}$.
		By \cref{eq:product}, if $G \equiv_{\mathcal{F}} H$ then $G \times K \equiv_{\mathcal{F}} H \times K$ for all graphs~$K$. Hence, by \cref{eq:product},
		\[
		\sum_{L \in \mathcal{L'}} \hom(L, K) \alpha_L \hom(L, G) = \sum_{L \in \mathcal{L'}} \hom(L, K) \alpha_L \hom(L, H).
		\]
		Both sides can be read as the product of the matrix $M^T$ with a vector of the form $(\alpha_L \hom(L, -))_{L \in \mathcal{L'}}$. 
		By multiplying from the left with the inverse of $M^T$,
		it follows that $\alpha_L \hom(L, G) = \alpha_L \hom(L, H)$ for all $L \in \mathcal{L}'$ which in turn implies that $\hom(L, G) = \hom(L, H)$ for all $L \in \mathcal{L}$. Thus, $\mathcal{L} \subseteq \cl(\mathcal{F})$.
	\end{proof}

	In the setting of \cref{lem:lincomb}, we say that the relation $\equiv_{\mathcal{F}}$ \emph{determines} the linear combination $\sum_{L \in \mathcal{L}} \alpha_L \hom(L, -)$. Note that it is essential for the argument to carry through that the elements of $\mathcal{L}$ are pairwise non-isomorphic and that $\alpha_L \neq 0$ for all $L$. Efforts will be undertaken to establish this property for certain linear combinations in the subsequent sections.

	\subsection{Taking Summands and Preservation under Disjoint Unions}
	\label{sec:summands}
	\begin{figure}
		\centering
		\begin{tikzpicture}[node distance=3cm]
			\node (minors) {minors};
			\node [right of=minors, yshift=-.5cm] (edgdel) {edge deletion};
			\node [right of=edgdel] (subgraphs) {subgraphs};
			\node [right of=subgraphs] (indsub) {induced subgraphs};
			\node [right of=indsub, yshift=.5cm] (summands) {summands};
			
			\node [above of=subgraphs, yshift=-2cm] (contr) {edge contractions};
			
			\draw [->] (minors) |- (contr);
			\draw [->, dashed] (contr) -| (summands);
			\draw [->] (minors) |- (edgdel);
			\draw [->, dashed] (edgdel) -- (subgraphs);
			\draw [->] (subgraphs) -- (indsub);
			\draw [->] (indsub) -| (summands);
		\end{tikzpicture}
		\caption{Relationship between closure properties of homomorphism distinguishing closed graph classes. The non-obvious implications are dashed and proven in \cref{lem:minors,lem:contr}.}
		\label{fig:relationship}
	\end{figure}
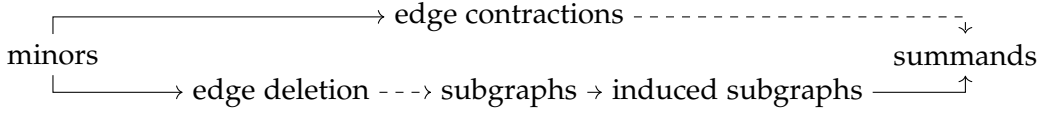

	In this section, the strategy yielding the results in \cref{tab:overview} is presented for the rather simple case of \cref{thm:taking-summands}. This theorem relates the property of a graph class $\mathcal{F}$ to be closed under taking summands to the property of $\equiv_{\mathcal{F}}$ to be preserved under disjoint unions. 
	This closure property is often assumed in the context of homomorphism indistinguishability \cite{abramsky_discrete_2022,roberson_oddomorphisms_2022} and fairly mild. It is the most general property among those studied here, cf.\@ \cref{fig:relationship}.
	\cref{thm:taking-summands} answers a question from~\cite[p.\@~7]{roberson_oddomorphisms_2022} affirmatively: Is it true that if $\equiv_{\mathcal{F}}$ is preserved under disjoint unions then $\cl(\mathcal{F})$ is closed under taking summands?
	
	\begin{theorem}  \label{thm:taking-summands}
		For a graph class $\mathcal{F}$ and the assertions
		\begin{enumerate}
			\item $\mathcal{F}$ is \emph{closed under taking summands}, i.e.\@ if $F_1 + F_2 \in \mathcal{F}$ then $F_1, F_2 \in \mathcal{F}$,\label{ts1}
			\item $\equiv_{\mathcal{F}}$ is \emph{preserved under disjoint unions}, i.e.\@ for all simple graphs $G$, $G'$, $H$, and $H'$, if $G \equiv_{\mathcal{F}} G'$ and $H \equiv_{\mathcal{F}} H'$ then $G + H \equiv_{\mathcal{F}} G' + H'$,\label{ts2}
			\item $\cl(\mathcal{F})$ is closed under taking summands,\label{ts3}
		\end{enumerate}
		the implications \ref{ts1} $\Rightarrow$ \ref{ts2} $\Leftrightarrow$ \ref{ts3} hold.
	\end{theorem}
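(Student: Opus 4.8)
The plan is to prove the three required implications via the cycle \ref{ts1}~$\Rightarrow$~\ref{ts2}~$\Rightarrow$~\ref{ts3}~$\Rightarrow$~\ref{ts2}. Two easy preliminary facts will be used throughout. First, $\equiv_{\mathcal{F}}$ and $\equiv_{\cl(\mathcal{F})}$ are the same relation: one inclusion follows from $\mathcal{F} \subseteq \cl(\mathcal{F})$, and for the other, if $G \equiv_{\mathcal{F}} H$ then $\hom(K, G) = \hom(K, H)$ for every $K \in \cl(\mathcal{F})$ by definition of $\cl$. Second, $\cl(\mathcal{F})$ is always closed under disjoint unions, since $\hom(K_1 + K_2, -) = \hom(K_1, -)\hom(K_2, -)$ by \eqref{eq:coproduct}. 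Given these, \ref{ts1}~$\Rightarrow$~\ref{ts2} is routine: for $G \equiv_{\mathcal{F}} G'$, $H \equiv_{\mathcal{F}} H'$ and $F \in \mathcal{F}$ with connected components $C_1, \dots, C_k$, iterating \ref{ts1} gives $C_i \in \mathcal{F}$ for all $i$, and by \eqref{eq:coproduct} and \eqref{eq:disjoint} the value $\hom(F, G+H) = \prod_{i=1}^{k}(\hom(C_i, G) + \hom(C_i, H))$ is left unchanged when $G, H$ are replaced by $G', H'$. The implication \ref{ts3}~$\Rightarrow$~\ref{ts2} is then purely formal: \ref{ts3} says that $\cl(\mathcal{F})$ satisfies \ref{ts1} (with $\mathcal{F}$ replaced by $\cl(\mathcal{F})$), so applying the already-proven \ref{ts1}~$\Rightarrow$~\ref{ts2} to the class $\cl(\mathcal{F})$ yields that $\equiv_{\cl(\mathcal{F})} = \equiv_{\mathcal{F}}$ is preserved under disjoint unions.

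The heart of the argument is \ref{ts2}~$\Rightarrow$~\ref{ts3}. Because $\cl(\mathcal{F})$ is closed under disjoint unions and $F_1, F_2$ are disjoint unions of connected components of $F_1 + F_2$, it suffices to show that every connected component of a graph $F \in \cl(\mathcal{F})$ again lies in $\cl(\mathcal{F})$. I would write $F = \coprod_{s=1}^{r} m_s D_s$ with $D_1, \dots, D_r$ pairwise non-isomorphic connected graphs and $m_s \ge 1$, fix simple graphs $G \equiv_{\mathcal{F}} H$, and aim to derive $\hom(D_{s_0}, G) = \hom(D_{s_0}, H)$ for every $s_0$. The key move is to enrich the pair: for an arbitrary (simple) test graph $K$, assumption \ref{ts2} gives $G + K \equiv_{\mathcal{F}} H + K$, hence $\hom(F, G + K) = \hom(F, H + K)$ since $F \in \cl(\mathcal{F})$. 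By \eqref{eq:coproduct} and \eqref{eq:disjoint} one has $\hom(F, G + K) = \prod_s (\hom(D_s, G) + \hom(D_s, K))^{m_s}$; expanding the binomials and regrouping each monomial $\prod_s \hom(D_s, K)^{j_s}$ as $\hom(\coprod_s j_s D_s, K)$ via \eqref{eq:coproduct} rewrites both sides as linear combinations $\sum_{\mathbf j} \gamma_{\mathbf j}(\cdot)\,\hom(F_{\mathbf j}, K)$ indexed by multi-indices $\mathbf j = (j_1, \dots, j_r)$ with $0 \le j_s \le m_s$, where $F_{\mathbf j} = \coprod_s j_s D_s$ and $\gamma_{\mathbf j}(G) = \prod_s \binom{m_s}{j_s}\hom(D_s, G)^{m_s - j_s}$. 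As the $D_s$ are pairwise non-isomorphic and connected, the graphs $F_{\mathbf j}$ are pairwise non-isomorphic, so from $\sum_{\mathbf j}\gamma_{\mathbf j}(G)\hom(F_{\mathbf j}, K) = \sum_{\mathbf j}\gamma_{\mathbf j}(H)\hom(F_{\mathbf j}, K)$ holding for all $K$ and the linear independence of homomorphism-count functions — equivalently the invertibility of the homomorphism matrix used in the proof of \cref{lem:lincomb} — one concludes $\gamma_{\mathbf j}(G) = \gamma_{\mathbf j}(H)$ for every $\mathbf j$. Evaluating at the multi-index that equals $m_s$ in every coordinate except $m_{s_0} - 1$ in coordinate $s_0$ gives $m_{s_0}\hom(D_{s_0}, G) = m_{s_0}\hom(D_{s_0}, H)$, which is the claim.

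The main obstacle I anticipate is the bookkeeping in \ref{ts2}~$\Rightarrow$~\ref{ts3}: isomorphic components of $F$ must be grouped into multiplicities so that the auxiliary family $(F_{\mathbf j})_{\mathbf j}$ is genuinely pairwise non-isomorphic and Lovász's invertibility of the homomorphism matrix applies; treating $F_1$ and $F_2$ naively as connected graphs would miss this. The conceptual crux is small but essential — preservation under disjoint unions upgrades the single pair $G \equiv_{\mathcal{F}} H$ to the whole family $G + K \equiv_{\mathcal{F}} H + K$, and letting $K$ vary supplies exactly the degrees of freedom needed to disentangle the components. Everything else — the two preliminary observations and the component computation in \ref{ts1}~$\Rightarrow$~\ref{ts2} — is a direct consequence of \eqref{eq:coproduct} and \eqref{eq:disjoint}.
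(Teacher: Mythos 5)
Your proof is correct, and for the key implication \ref{ts2}~$\Rightarrow$~\ref{ts3} it takes a genuinely different route from the paper's. The paper specialises the disjoint-union expansion~\eqref{eq:disjunion} to the single choice $H = F$: starting from $G + F \equiv_{\mathcal{F}} G' + F$ it obtains that $\equiv_{\mathcal{F}}$ determines the one linear combination $\sum_{I\subseteq[r]} \hom\bigl(\coprod_{i\in I} C_i, -\bigr)\hom\bigl(\coprod_{i\notin I} C_i, F\bigr)$, whose coefficients are all strictly positive because $\hom(C_i, F) > 0$; after grouping isomorphic summands, \cref{lem:lincomb} then delivers $\coprod_{i\in I}C_i \in \cl(\mathcal{F})$ for every $I$ in one stroke. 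You instead let an auxiliary graph $K$ range over all simple graphs, use $G+K\equiv_{\mathcal{F}}H+K$ to obtain an identity of functions of $K$, pre-group the components into multiplicities so that the graphs $F_{\mathbf j} = \coprod_s j_s D_s$ are manifestly pairwise non-isomorphic, invoke Lov\'asz linear independence directly rather than through \cref{lem:lincomb}, and read off $\hom(D_{s_0}, -)$ from a well-chosen coefficient. Both ultimately rest on inverting a homomorphism matrix; the paper's fixed choice $H = F$ is tighter and lets the existing lemma do the heavy lifting (\cref{lem:lincomb} itself introduces the needed degrees of freedom via the categorical product $G \times K$), whereas your free parameter $K$ in the disjoint union supplies those degrees of freedom from the start and makes the non-isomorphism of the auxiliary graphs automatic rather than arranged by post-hoc grouping. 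Your argument for \ref{ts1}~$\Rightarrow$~\ref{ts2} is also slightly more economical than the paper's: once $C_i\in\mathcal{F}$ for all $i$, the factored form $\prod_i\bigl(\hom(C_i,G)+\hom(C_i,H)\bigr)$ already exhibits invariance, with no need to expand into the sum over subsets as the paper does (the paper keeps the expanded form because it is reused verbatim in the second implication).
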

	\begin{proof}
		The central idea is to write, given graphs $F$, $G$, and $H$, the quantity $\hom(F, G + H)$ as expression in $\hom(F', G)$ and $\hom(F', H)$ where the $F'$ range over summands of $F$.
		To this end, write $F = C_1 + \dots + C_r$ as disjoint union of its connected components. Then,
		\begin{align}
			\hom(F, G + H) 
			&\overset{\eqref{eq:coproduct}}{=} \prod_{i=1}^r \hom(C_i, G + H) \notag
			\overset{\eqref{eq:disjoint}}{=} \prod_{i=1}^r \left( \hom(C_i, G) + \hom(C_i, H) \right) \notag \\
			&\overset{\eqref{eq:coproduct}}{=} \sum_{I \subseteq [r]} \hom(\coprod_{i \in I} C_i, G) \hom(\coprod_{i \in [r] \setminus I} C_i, H). \label{eq:disjunion}
		\end{align}
		In particular, if $\mathcal{F}$ is closed under taking summands 
		then $\coprod_{i \in I} C_i \in \mathcal{F}$ for all $I \subseteq [r]$.
		Thus, \ref{ts1} implies \ref{ts2}.
		
		Assume \ref{ts2} and let $F \in \cl(\mathcal{F})$. Write as above $F = C_1 + \dots + C_r$ as disjoint union of its connected components.
		By the assumption that $\equiv_{\mathcal{F}}$ is preserved under disjoint unions, for all graphs $G$ and $G'$, if $G \equiv_{\mathcal{F}} G'$ then $G + F \equiv_{\mathcal{F}} G' + F$ and hence $\hom(F, G+F) = \hom(F, G' + F)$.
		By \cref{eq:disjunion} with $H = F$, 
		the relation $\equiv_{\mathcal{F}}$ determines the linear combination $\sum_{I \subseteq [r]} \hom(\coprod_{i \in I} C_i, -) \hom(\coprod_{i \in [r] \setminus I} C_i, F)$. 
		Note that it might be the case that $\coprod_{i \in I} C_i \cong \coprod_{j \in J} C_j$ for some $I \neq J$.
		Grouping such summands together and adding their coefficients yields a linear combination satisfying the assumptions of \cref{lem:lincomb} since $\hom(C_i, F) > 0$ for all $i \in [r]$. Hence, $\coprod_{i \in I} C_i \in \cl(\mathcal{F})$ for all $I \subseteq [r]$ and \ref{ts3} follows.

		The implication \ref{ts3} $\Rightarrow$ \ref{ts2} follows from \ref{ts1} $\Rightarrow$ \ref{ts2} for $\cl(\mathcal{F})$ since $\equiv_{\mathcal{F}}$ and $\equiv_{\cl(\mathcal{F})}$ coincide.
	\end{proof}

	The proofs of the other results in \cref{tab:overview} are conceptually similar to the just completed proof. 
	The general idea can be briefly described as follows:
	\begin{enumerate}
	\item Derive a linear expression similar to \cref{eq:disjunion} for the number of homomorphisms from $F$ into the graph constructed using the assumed preservation property of $\equiv_{\mathcal{F}}$, e.g.\@ the graph $G+H$ in the case of \cref{thm:taking-summands}. These linear combinations typically involve sums over subsets $U$ of vertices or edges of~$F$, each contributing a summand of the form $\alpha_U\hom(F_U, -)$ where $\alpha_U$ is some coefficient and $F_U$ is a graph constructed from $F$ using~$U$. Hence, if $\mathcal{F}$ is closed under the construction transforming $F$ to $F_U$ then $\equiv_{\mathcal{F}}$ has the desired preservation property.
	\item In general, it can be that $F_U$ and $F_{U'}$ are isomorphic even though $U \neq U'$, e.g.\@ in \cref{eq:disjunion} if $F$ contains two isomorphic connected components. 
	In order to apply \cref{lem:lincomb}, one must group the summands $\alpha_U\hom(F_U, -)$ by the isomorphism type $F'$ of the $F_U$. 
	The coefficient of $\hom(F', -)$ in the new linear combination ranging over pairwise non-isomorphic graphs is the sum of $\alpha_U$ over all $U$ such that $F_U \cong F'$.
	Once it is established that this coefficient is non-zero, it follows that if $\equiv_{\mathcal{F}}$ has the preservation property then $\cl(\mathcal{F})$ has the desired closure property.
	\end{enumerate}

	\subsection{Taking Minors and Preservation under Complements}
	\label{sec:complements}
	
	The strategy outlined in the previous section is now applied to prove \cref{thm:complement}, which implies \cref{thm:main2}.
	This answers a question of Roberson  \cite[Question~8]{roberson_oddomorphisms_2022} affirmatively: Is it true that if $\mathcal{F}$ is such that $\equiv_{\mathcal{F}}$ is preserved under taking complements then there exist a minor-closed $\mathcal{F'}$ such that $\equiv_{\mathcal{F}}$ and $\equiv_{\mathcal{F}'}$ coincide.
	
	\cref{thm:complement} is among the first results substantiating Roberson's conjecture not only for example classes but in full generality.
	In particular, as noted in \cite[p.\@~2]{roberson_oddomorphisms_2022}, there was little reason to believe that minor-closed graph families should play a distinct role in the theory of homomorphism indistinguishability. \cref{thm:complement} indicates that this might be the case.
	Indeed, while Roberson's conjecture asserts that $\cl(\mathcal{F})$ coincides with~$ \mathcal{F}$ for every minor-closed and union-closed graph class~$\mathcal{F}$, \cref{thm:complement} yields unconditionally that $\cl(\mathcal{F})$ is a minor-closed graph class itself.
	
	\begin{theorem}
		\label{thm:complement}
		For a graph class $\mathcal{F}$ and the assertions
		\begin{enumerate}
			\item $\mathcal{F}$ is closed under edge contraction and deletion,\label{rcl1}
			\item $\equiv_{\mathcal{F}}$ is \emph{preserved under taking complements}, i.e.\@ for all simple graphs $G$ and $H$ it holds that $G \equiv_{\mathcal{F}} H$ if and only if $\overline{G} \equiv_{\mathcal{F}} \overline{H}$,\label{rcl2}
\item $\cl(\mathcal{F})$ is minor-closed,\label{rcl4}
		\end{enumerate}
		the implications \ref{rcl1} $\Rightarrow$ \ref{rcl2} $\Leftrightarrow$ \ref{rcl4} hold.
	\end{theorem}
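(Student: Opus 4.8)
The plan is to reuse the two-step machinery behind \cref{thm:taking-summands}, the new ingredient being an inclusion--exclusion identity expressing $\hom(F,\overline G)$ through the numbers $\hom(F',G)$ for graphs $F'$ obtained from $F$ by deleting and contracting edges. For disjoint $D_C,D_H\subseteq E(F)$, let $F_{D_C,D_H}$ denote the simple graph obtained from $F$ by deleting every edge outside $D_C\cup D_H$ and then contracting the edges in $D_C$; I claim
\[
\hom(F,\overline G)=\sum_{\substack{D_C,D_H\subseteq E(F)\\ D_C\cap D_H=\emptyset}}(-1)^{|D_C|+|D_H|}\hom(F_{D_C,D_H},G),
\]
where a term is read as $0$ whenever some edge of $D_H$ lies inside a connected component of $(V(F),D_C)$, so that $F_{D_C,D_H}$ would carry a loop. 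To see this, call a map $h\colon V(F)\to V(G)$ \emph{bad} at an edge $e=uv$ if $h(u)=h(v)$ or $h(u)h(v)\in E(G)$; then $h$ is a homomorphism $F\to\overline G$ precisely when it is bad at no edge. Ordinary inclusion--exclusion over the set of bad edges, followed by splitting that set into its ``collision'' part $D_C$ and its ``lands on an edge of $G$'' part $D_H$ --- which are disjoint because $G$ is loopless --- produces the displayed sum, as the number of maps with $h(u)=h(v)$ for all $uv\in D_C$ and $h(u)h(v)\in E(G)$ for all $uv\in D_H$ is exactly $\hom(F_{D_C,D_H},G)$.

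The implication \ref{rcl1}$\,\Rightarrow\,$\ref{rcl2} is then immediate: if $\mathcal F$ is closed under edge deletion and contraction, then every $F_{D_C,D_H}$ which is a (loopless) simple graph lies in $\mathcal F$ whenever $F\in\mathcal F$, while all other terms vanish for every simple $G$; hence $\hom(F,\overline G)$ depends only on $(\hom(F',G))_{F'\in\mathcal F}$, so $G\equiv_{\mathcal F}H$ forces $\overline G\equiv_{\mathcal F}\overline H$, and the converse follows by replacing $G,H$ with $\overline G,\overline H$ and using $\overline{\overline G}=G$. The implication \ref{rcl4}$\,\Rightarrow\,$\ref{rcl2} is equally short: a minor-closed class is closed under edge deletion and contraction, so \ref{rcl1}$\,\Rightarrow\,$\ref{rcl2} applied to $\cl(\mathcal F)$ gives that $\equiv_{\cl(\mathcal F)}=\equiv_{\mathcal F}$ is preserved under complements.

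The substance is \ref{rcl2}$\,\Rightarrow\,$\ref{rcl4}. Fix $F\in\cl(\mathcal F)$ and an edge $e\in E(F)$. By \ref{rcl2}, $G\equiv_{\mathcal F}H$ implies $\overline G\equiv_{\mathcal F}\overline H$ and hence $\hom(F,\overline G)=\hom(F,\overline H)$, so $\equiv_{\mathcal F}$ determines the right-hand side of the identity. Grouping the summands by the isomorphism type of $F_{D_C,D_H}$ and applying \cref{lem:lincomb}, it suffices to show that the types $[F\setminus e]$ and $[F/e]$ occur with nonzero total coefficient; this yields $F\setminus e,F/e\in\cl(\mathcal F)$, and once $\cl(\mathcal F)$ is closed under edge deletion and contraction it is closed under subgraphs by \cref{lem:minors} (being homomorphism distinguishing closed), hence minor-closed. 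Comparing vertex numbers, a summand contributes to $[F\setminus e]$ only if $D_C=\emptyset$ and $D_H=E(F)\setminus\{e'\}$ with $F\setminus e'\cong F\setminus e$, all carrying the same sign $(-1)^{|E(F)|-1}$, so that coefficient is $\pm\lvert\{e'\in E(F):F\setminus e'\cong F\setminus e\}\rvert\neq0$. A summand contributes to $[F/e]$ only if $D_C=\{e'\}$ for a single edge $e'$; writing $\pi$ for the map $E(F)\setminus\{e'\}\to E(F/e')$ induced by contracting $e'$, one has $F_{\{e'\},D_H}\cong(V(F/e'),\pi(D_H))$, so the relevant $D_H$ are those with $(V(F/e'),\pi(D_H))\cong F/e$.

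The hard part will be controlling the cancellation among different $D_H$ and different $e'$, since --- unlike in \cref{thm:taking-summands} or in the case of $[F\setminus e]$ --- the signs $(-1)^{|D_C|+|D_H|}$ of the contributing terms no longer all agree. The key observation is that, because $\pi$ is surjective and hence all of its fibres are nonempty, $\sum_{D_H:\pi(D_H)=B}(-1)^{|D_H|}=(-1)^{|B|}$ for every $B\subseteq E(F/e')$. Summing over the $B$ (all of size $|E(F/e)|$) with $(V(F/e'),B)\cong F/e$, and then over $e'$, shows that the total coefficient of $[F/e]$ equals $\pm\sum_{e'\in E(F)}\#\{B\subseteq E(F/e'):(V(F/e'),B)\cong F/e\}$, a sum of nonnegative integers whose $e'=e$ term is at least $1$; hence it is nonzero, and the proof goes through. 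A minor secondary point is the step from ``closed under edge deletion and contraction'' to ``minor-closed'', i.e.\ to closure under vertex deletion, which the identity does not deliver on its own and which goes through \cref{lem:minors}.
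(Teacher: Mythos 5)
Your proof is correct and follows the same high-level strategy as the paper: express $\hom(F,\overline{G})$ as a linear combination of $\hom(F',G)$ over minors $F'$ of $F$, apply \cref{lem:lincomb} after grouping by isomorphism type, and deduce vertex deletion from edge deletion via \cref{lem:minors}. Your expansion formula is in fact identical to the paper's (with $L=D_C$ and $E(F')=D_C\cup D_H$, the paper's $F'\oslash L$ is exactly your $F_{D_C,D_H}$), but you derive it in one shot by inclusion--exclusion over ``bad'' edges, split into collision and $G$-edge parts; the paper reaches the same identity by composing two intermediate lemmas, writing $\overline G=\widehat{G^\circ}$ and deriving separate formulas for $\hom(F,\widehat G)$ and $\hom(F,G^\circ)$. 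Your derivation is arguably more self-contained, though it forgoes \cref{lem:looping}, which the paper also reuses in the proof of \cref{thm:full-complement}.

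The genuine divergence is in the edge-contraction case of \ref{rcl2}$\Rightarrow$\ref{rcl4}. The paper first reduces, by deleting edges (already known to stay in $\cl(\mathcal F)$), to the case where $e$ lies in no triangle, so that $|E(F/e)|=|E(F)|-1$; it then shows the only contributing indices are $(F,\{e'\})$, all carrying the same sign. You instead handle the general case directly: you group the contributing $D_H$ by their image $B=\pi(D_H)$ under the contraction map and use the fibre-sum identity $\sum_{\pi(D_H)=B}(-1)^{|D_H|}=(-1)^{|B|}$ (valid because $\pi$ is surjective, so all fibres are nonempty) to show that after regrouping, all surviving contributions carry the sign $(-1)^{1+|E(F/e)|}$ and add up to a strictly positive integer. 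This avoids the paper's WLOG reduction at the cost of a slightly more intricate cancellation argument; both are correct, and your version illustrates nicely why the apparent sign cancellations in the raw expansion are harmless.

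One small expository point: you should make explicit, when invoking \cref{lem:lincomb}, that terms whose total coefficient vanishes are simply omitted from the finite family $\mathcal L$, so that only the classes with nonzero coefficient land in $\cl(\mathcal F)$; since you only need $[F\setminus e]$ and $[F/e]$ to have nonzero coefficient, this is exactly what you verify, but it is worth a sentence.
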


	The strategy is to write $\hom(F, \overline{G})$ as a linear combination of $\hom(F', G)$ for minors $F'$ of $F$.
	This is accomplished in two steps: First, we consider the full complement $\widehat{G}$ of $G$ in which not only every edge of~$G$ is replaced by a non-edge (and vice versa) but also every loop is replaced by a non-loop (and vice versa). Secondly, we consider for a simple graph $G$ the looped graph $G^\circ$ obtained from $G$ by adding a loop to every vertex.
	The fact that $\widehat{G^\circ} \cong \overline{G}$ motivates this two-step approach. 
	
	\begin{lemma}[{\cite[Equation~(5.23)]{lovasz_large_2012}}] \label{eq:complement}
		For every  graph $G$ and every simple graph $F$,
		\[
		\hom(F, \widehat{G}) = \sum_{F' \subseteq F \text{ s.t.\@ }V(F') = V(F)} (-1)^{|E(F')|} \hom(F', G).
		\]
	\end{lemma}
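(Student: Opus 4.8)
The plan is to prove the identity by inclusion--exclusion over the edges of $F$, invoking \cref{fact:incl-excl}. Write $V \coloneqq V(F)$ and let $S$ be the set of all maps $h \colon V \to V(G)$. For each edge $e = uv \in E(F)$---necessarily with $u \neq v$, since $F$ is simple---define the ``bad set'' $A_e \coloneqq \{h \in S \mid h(u)h(v) \in E(G)\}$, where for $h(u) = h(v) = w$ the expression ``$h(u)h(v) \in E(G)$'' is to be read as ``$w$ carries a loop in $G$''.

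First I would verify that a map $h \in S$ is a homomorphism from $F$ to $\widehat{G}$ if and only if $h$ lies in $\bigcap_{e \in E(F)} \overline{A_e}$, i.e.\@ avoids every bad set. This is just unwinding the definition of the full complement: for $uv \in E(F)$ one has $h(u)h(v) \in E(\widehat{G})$ precisely when $h(u)h(v) \notin E(G)$---in the case $h(u) \neq h(v)$ because $\widehat{G}$ interchanges edges and non-edges, and in the case $h(u) = h(v)$ because $\widehat{G}$ interchanges loops and non-loops. Hence $\hom(F, \widehat{G}) = \lvert \bigcap_{e \in E(F)} \overline{A_e} \rvert$.

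Next I would apply \cref{fact:incl-excl} to the family $(A_e)_{e \in E(F)}$ of subsets of $S$ and identify the arising intersections. For $I \subseteq E(F)$, a map $h$ lies in $\bigcap_{e \in I} A_e$ if and only if $h(u)h(v) \in E(G)$ for every $uv \in I$, which---keeping the loop convention in mind, and noting that the spanning subgraph of $F$ with edge set $I$ is itself simple while $G$ may carry loops---is exactly the condition that $h$ be a homomorphism from $F_I$ to $G$, where $F_I$ denotes the subgraph of $F$ with $V(F_I) = V$ and $E(F_I) = I$. Thus $\lvert \bigcap_{e \in I} A_e \rvert = \hom(F_I, G)$; for $I = \emptyset$ this is just $\lvert S \rvert = \lvert V(G) \rvert^{\lvert V \rvert}$. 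Substituting into \cref{fact:incl-excl} and using that $I \mapsto F_I$ is a bijection onto the spanning subgraphs $F' \subseteq F$ gives
\[
\hom(F, \widehat{G}) = \sum_{I \subseteq E(F)} (-1)^{|I|} \hom(F_I, G) = \sum_{F' \subseteq F,\ V(F') = V(F)} (-1)^{|E(F')|} \hom(F', G),
\]
as claimed.

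The one delicate point---and the step I would flag as the main obstacle---is the consistent treatment of non-injective maps $h$ together with loops in $G$. One must fix, once and for all, what ``$h(u)h(v) \in E(G)$'' means when $h(u) = h(v)$, and then check that this single convention simultaneously makes the reformulation of homomorphisms into $\widehat{G}$ and the identification of $\bigcap_{e \in I} A_e$ with the homomorphisms $F_I \to G$ come out correctly. Everything beyond that is the standard inclusion--exclusion bookkeeping.
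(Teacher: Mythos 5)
Your proof is correct and follows essentially the same route as the paper's: both define $A_e$ as the set of maps sending $e$ to an edge or loop of $G$, express $\hom(F,\widehat{G})$ as $\lvert \bigcap_e \overline{A_e}\rvert$, apply inclusion--exclusion, and identify $\lvert \bigcap_{e\in I} A_e\rvert$ with $\hom(F_I, G)$ for the spanning subgraph $F_I$ with edge set $I$. The subtlety you flag about loops is precisely what the paper handles by phrasing $A_e$ as ``the image of $e$ under $h$ is an edge or a loop in $G$''.
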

	\begin{proof}Write $S$ for the set of all maps $V(F) \to V(G)$. For $e \in E(F)$, write $A_e \subseteq S$ for set of all maps $h$ such that the image of $e$ under $h$ is an edge or a loop in $G$.
		It holds that $h \not\in A_e$ if and only if $e$ is mapped under $h$ either to two non-adjacent vertices or to a single vertex without a loop.
		Hence, $\hom(F, \widehat{G}) = \left\lvert \bigcap_{e \in E(F)} \overline{A_e} \right\rvert$.
		The claim follows from the Inclusion--Exclusion Principle (cf.\@ \cref{fact:incl-excl}) noting that $|S| = \hom(|F|K_1, G)$ and $\left\lvert \bigcap_{e \in F'} A_e \right\rvert$ for some $F' \subseteq F$ is the number of homomorphisms from the graph $F'$ to $G$ which is obtained by deleting all edges from $F$ which are not in $F'$.
	\end{proof}

	In light of \cref{eq:complement}, it suffices to write $\hom(F, G^\circ)$ as a linear combination of $\hom(F', G)$ for minors $F'$ of $F$.
	To ease bookkeeping, we consider a particular type of quotient graphs.
	For a simple graph $F$ and a set of edges $L \subseteq E(F)$, define the \emph{contraction relation}~$\sim_L$ on $V(F)$ by declaring $v \sim_L w$ if
	$v$ and $w$ lie in the same connected component of the subgraph of $F$ with vertex set $V(F)$ and edge set $L$.
	Write $[v]_L$ for the classes of $v \in V(F)$ under the equivalence relation $\sim_L$.
	
	The \emph{contraction quotient} $F \oslash L$ is the graph whose vertex set is the set of equivalence classes under $\sim_L$ and with an edge between $[v]_L$ and $[w]_L$ if and only if there is an edge $xy \in E(F) \setminus L$ such that $x \sim_L v$ and $y \sim_L w$. 
	In general, $F \oslash L$ may contain loops, cf.\@ \cref{ex:contr}. However, if it is simple then it is equal to $F/\mathcal{P}$ where $\mathcal{P}$ is the partition of $V(F)$ into equivalence classes under $\sim_L$, i.e.\@ $\mathcal{P} \coloneqq \{[v]_L \mid v \in V(F)\}$. In this case, $F \oslash L$ is a graph obtained from $F$ by edge contractions.
	
	With this notation, the quantity $\hom(F, G^\circ)$ can be succinctly written as linear combination.
	Using \cref{eq:complement,lem:looping}, \cref{thm:complement} can be proven by the strategy described in \cref{sec:summands}.
	
	\begin{theorem} \label{lem:looping}
		Let $F$ and $G$ be simple graphs. Then
		\[
		\hom(F, G^\circ) = \sum_{L \subseteq E(F)} \hom(F \oslash L, G).
		\]
	\end{theorem}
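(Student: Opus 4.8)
The plan is to establish the identity bijectively, partitioning the homomorphisms $F \to G^\circ$ according to which edges of $F$ get collapsed. Concretely, a homomorphism $h \colon F \to G^\circ$ is the same thing as a map $h \colon V(F) \to V(G)$ with the property that for every $uv \in E(F)$ either $h(u) = h(v)$ (the edge is sent to a loop of $G^\circ$) or $h(u)h(v) \in E(G)$. To such an $h$ assign the \emph{collapsed edge set} $L_h \coloneqq \{uv \in E(F) \mid h(u) = h(v)\} \subseteq E(F)$. Since every $h$ determines a unique such subset, this splits $\hom(F, G^\circ)$ as $\sum_{L \subseteq E(F)} \left| \{ h \colon F \to G^\circ \mid L_h = L \} \right|$, and it suffices to prove, for each fixed $L \subseteq E(F)$, that
\[
\left| \{ h \colon F \to G^\circ \mid L_h = L \} \right| = \hom(F \oslash L, G).
\]

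First I would dispose of the degenerate case in which $F \oslash L$ has a loop, say at $[v]_L$. Then, by definition of $\oslash$, there is an edge $xy \in E(F) \setminus L$ with $x \sim_L v \sim_L y$; in particular $x$ and $y$ lie in one connected component of $(V(F), L)$. The right-hand side is $0$ because $G$ is simple. The left-hand side is $0$ as well: any $h$ with $L_h = L$ is constant on each $\sim_L$-class (it identifies the endpoints of every edge of $L$, hence of every $L$-path), so $h(x) = h(y)$, which would put $xy$ into $L_h = L$ --- a contradiction. So assume from now on that $F \oslash L$ is simple, i.e.\@ $[u]_L \neq [w]_L$ for every $uw \in E(F) \setminus L$.

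Next I would set up the bijection. Given $h$ with $L_h = L$, it is constant on $\sim_L$-classes as above, hence factors as $h = \bar h \circ \pi_L$ through the quotient map $\pi_L \colon V(F) \to V(F \oslash L)$; I would check that $\bar h$ is a homomorphism $F \oslash L \to G$ by noting that an edge of $F \oslash L$ between $[u]_L$ and $[w]_L$ arises from some $xy \in E(F) \setminus L$ with $x \sim_L u$, $y \sim_L w$, and since $xy \notin L = L_h$ we have $h(x) \neq h(y)$, forcing $h(x)h(y) \in E(G)$, i.e.\@ $\bar h([u]_L) \bar h([w]_L) \in E(G)$. Conversely, given a homomorphism $g \colon F \oslash L \to G$, I would set $h \coloneqq g \circ \pi_L$; edges in $L$ are sent to loops of $G^\circ$ and edges $uw \in E(F)\setminus L$ satisfy $g([u]_L)g([w]_L) \in E(G)$ (using $[u]_L \neq [w]_L$), so $h$ is a homomorphism $F \to G^\circ$, and moreover $L_h = L$: the inclusion $L \subseteq L_h$ is clear, while for $uw \in E(F) \setminus L$ we have $h(u)h(w) \in E(G)$, hence $h(u) \neq h(w)$ since $G$ has no loops, so $uw \notin L_h$. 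The assignments $h \mapsto \bar h$ and $g \mapsto g \circ \pi_L$ are visibly mutually inverse, which proves the displayed equality. Summing over all $L \subseteq E(F)$ gives the theorem.

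I do not anticipate a serious obstacle; the one place demanding care is the two-sided bookkeeping around loops of $F \oslash L$ --- verifying both that the factored map $\bar h$ genuinely lands in the simple graph $G$ and that $L_h$ recovers exactly $L$ and not some larger edge set, which is precisely where simplicity of $G$ is used.
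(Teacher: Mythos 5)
Your proof is correct and follows essentially the same approach as the paper's: both establish a bijection between homomorphisms $F \to G^\circ$ and pairs $(L, \phi)$ with $\phi \colon F \oslash L \to G$, using the same collapsed-edge set $L$ and the same factorisation through $\pi_L$. The only cosmetic difference is that you explicitly split off the loop-in-$F\oslash L$ degenerate case up front, whereas the paper folds it into the well-definedness check of the forward map; both handle the same subtlety correctly.
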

	\begin{proof}The proof is by establishing a bijection between the set of homomorphisms $h \colon F \to G^\circ$ and the set of pairs $(L, \phi)$ where $L \subseteq E(F)$ and $\phi \colon F \oslash L \to G$ is a homomorphism.	
	
	\begin{claim} \label{cl:looping-map}
		The map which associates a homomorphism $h \colon F \to G^\circ$ with the pair $(L, \phi)$ consisting of the set $L \coloneqq \{e \in E(F) \mid |h(e)| = 1\}$ and the homomorphism $\phi \colon F \oslash L \to G$, $[x]_L \mapsto h(x)$ is well-defined.
	\end{claim}
	\begin{claimproof}
		First observe that if $v,w  \in V(F)$ are such that $v \sim_L w$ then $h(v) = h(w)$. Indeed, if wlog $v \neq w$ then there exists a walk $vu_1, u_1u_2, \dots, u_nw \in L$. By construction, $|h(vu_1)| = |h(u_1u_2)| = \dots = |h(u_nw)| = 1$ and hence $h(v) = h(w)$.
		In particular, $F \oslash L$ is a simple graph without any loops. Indeed, if $vw \in E(F)$ are such that $v \sim_L w$ then $h(v) = h(w)$ and hence $vw \in L$.
		
		The initial observation implies that $\phi$ is a well-defined map $V(F) \to V(G)$. It remains to argue that $\phi$ is a homomorphism $F \to G$. Indeed, if $[v]_L \neq [w]_L$ are adjacent in $F \oslash L$ then there exist $x \sim_L v$ and $y \sim_L w$ such that $xy \in E(F) \setminus L$. Hence, $h$ maps $xy$ to an edge in $G^\circ$, rather than to a loop. In particular, $\phi$ maps $[v]_L$ and $[w]_L$ to an edge in $G$.
	\end{claimproof}
	
	\begin{claim}
		The map $h \mapsto (L, \phi)$ described in \cref{cl:looping-map} is surjective.
	\end{claim}
	\begin{claimproof}
		Let $L \subseteq E(F)$ and $\phi \colon F \oslash L \to G$ be a homomorphism. Observe that this implies that $F \oslash L$ is without loops. 
		Let $\pi \colon V(F) \to V(F \oslash L)$ denote the projection $v \mapsto [v]_L$.
		It is claimed that $h \coloneqq \phi \circ \pi$ is a homomorphism $F \to G^\circ$. Let $xy \in E(F)$. If $x \sim_L y$ then the image of $xy$ under $h$ is a loop since then $\pi(x) = \pi(y)$.
		If $x \not\sim_L y$ then in particular $xy \not\in L$ and there is an edge between $[x]_L$ and $[y]_L$ in $F \oslash L$. Since $\phi$ is a homomorphism, $h(x)$ and $h(y)$ are in both cases adjacent in~$G^\circ$.
		
		It remains to argue that this $h$ is mapped to $(L, \phi)$ under the construction described in \cref{cl:looping-map}.
		Write $L' \coloneqq \{e \in E(F) \mid |h(e)| = 1\}$. Towards concluding that $L = L'$, 
		let $uv \in L$. Then $h(uv) = \phi(\pi(uv))$ is a singleton since in particular $u \sim_L v$. Hence, $L \subseteq L'$.
		Conversely, let $uv \in L'$. By assumption, $h(uv) = \phi(\pi(uv))$ is a singleton and thus it remains to distinguish two cases:
		If $\pi(u) = \pi(v)$ then $uv \in L$ because otherwise there would be a loop at $[u]_L = [v]_L$ in $F \oslash L$.
		If $\pi(u) \neq \pi(v)$ then $\phi([u]_L) = \phi([v]_L)$ and also $uv \in L$ because otherwise there would be an edge between $[u]_L \neq [v]_L$ in $F \oslash L$ and this cannot happen since $\phi$ is a homomorphism into a simple graph.
		
		Finally, write $\phi'$ for the homomorphism $F \oslash L \to G$ constructed from $h$ as described in \cref{cl:looping-map}. Then for every vertex $x \in V(F)$ by definition, $\phi'([x]_L) = h(x) = \phi(\pi(x)) = \phi([x]_L)$ and thus $\phi = \phi'$.
	\end{claimproof}
	It is easy to see that the map devised in \cref{cl:looping-map} is injective.
	The desired equation follows observing that the map in \cref{cl:looping-map} provides a bijection between the sets whose elements are counted on the right and left hand-side respectively.
\end{proof}
		
	The following \cref{ex:contr} illustrates the above construction and \cref{lem:looping}.
	\begin{example}
		\label{ex:contr}
		Let $K_3$ denote the clique with vertex set $\{1,2,3\}$. Then $K_3 \oslash \emptyset \cong K_3$, $K_3 \oslash \{12\} \cong K_2$, $K_3 \oslash \{12,23\} \cong K_1^\circ$, and $K_3 \oslash \{12,23,13\} \cong K_1$.
		For every simple graph $G$,
		$\hom(K_3, G^\circ) = \hom(K_3, G) + 3 \hom(K_2, G) + \hom(K_1, G)$ since $\hom(K_1^\circ, G) = 0$.
	\end{example}
	
	The final ingredient for the proof of \cref{thm:complement} is the following \cref{lem:minors}, which establishes one of the implications in \cref{fig:relationship}.
	
	\begin{lemma} \label{lem:minors}
		If a homomorphism distinguishing closed graph class $\mathcal{F}$ is closed under deleting edges then it is closed under taking subgraphs.
	\end{lemma}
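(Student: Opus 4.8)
The plan is to deduce, from closure under deleting edges, that $\mathcal{F}$ is moreover closed under deleting isolated vertices; since every subgraph of a graph is obtained by first deleting edges and then deleting (now) isolated vertices, this suffices. Concretely, given $F' \subseteq F$ with $F \in \mathcal{F}$, I would first delete from $F$ every edge outside $E(F')$. By hypothesis the resulting graph $F''$ — with $V(F'') = V(F)$ and $E(F'') = E(F')$ — lies in $\mathcal{F}$, and every vertex of $V(F) \setminus V(F')$ is isolated in $F''$ (each of its incident edges lay outside $E(F')$). As $F'$ is obtained from $F''$ by repeatedly removing an isolated vertex, it remains to prove the single claim: \emph{if $F_0 + K_1 \in \mathcal{F}$ then $F_0 \in \mathcal{F}$}.

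Since $\mathcal{F}$ is homomorphism distinguishing closed, $\cl(\mathcal{F}) = \mathcal{F}$, so this amounts to showing $F_0 \in \cl(\mathcal{F})$, i.e.\@ that $\equiv_{\mathcal{F}}$ determines $\hom(F_0, -)$. Here I would invoke the edge-deletion hypothesis a second time: deleting all edges of $F_0 + K_1$ places the edgeless graph $nK_1$ in $\mathcal{F}$, where $n \coloneqq |V(F_0 + K_1)| \geq 1$. Hence, for all simple graphs $G, H$ with $G \equiv_{\mathcal{F}} H$, equation \eqref{eq:coproduct} together with $\hom(K_1, G) = |V(G)|$ gives $|V(G)|^n = \hom(nK_1, G) = \hom(nK_1, H) = |V(H)|^n$, so $|V(G)| = |V(H)|$ since both sides are non-negative integers and $n \geq 1$. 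Using $F_0 + K_1 \in \mathcal{F}$ and \eqref{eq:coproduct} once more, $\hom(F_0, G)\,|V(G)| = \hom(F_0 + K_1, G) = \hom(F_0 + K_1, H) = \hom(F_0, H)\,|V(H)|$. Cancelling the common factor $|V(G)| = |V(H)|$ — or, if it is $0$, noting that $G$ and $H$ are then both the empty graph — yields $\hom(F_0, G) = \hom(F_0, H)$. Thus $F_0 \in \cl(\mathcal{F}) = \mathcal{F}$, and applying this claim once for each vertex of $V(F) \setminus V(F')$ gives $F' \in \mathcal{F}$.

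The argument is short. The one point that needs a small idea is recovering the vertex count: closure under edge deletion on its own never removes a vertex, yet applied to $F_0 + K_1$ it still yields the edgeless graph $nK_1$, and the integrality of $|V(G)|^n = |V(H)|^n$ forces $|V(G)| = |V(H)|$ — which is exactly what lets one cancel the factor $\hom(K_1,-) = |V(-)|$ in the product $\hom(F_0 + K_1, -) = \hom(F_0, -)\,\hom(K_1, -)$ and thereby pin down $\hom(F_0, -)$ from a quantity the relation $\equiv_{\mathcal{F}}$ already determines. The rest — the reduction to deleting isolated vertices and the degenerate empty-graph case — is routine.
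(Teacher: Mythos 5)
Your proposal is correct and follows essentially the same route as the paper's proof: first use edge deletion to obtain $nK_1 \in \mathcal{F}$, deduce $\hom(K_1,G) = \hom(K_1,H)$ from $\hom(nK_1,G) = \hom(nK_1,H)$, and then cancel the factor $\hom(K_1,-)$ in the product $\hom(F_0 + K_1,-) = \hom(F_0,-)\hom(K_1,-)$ to pin down $\hom(F_0,-)$, reducing subgraphs to repeated deletion of isolated vertices. The only (minor) divergence is that you explicitly handle the degenerate case $|V(G)| = |V(H)| = 0$, which the paper leaves implicit.
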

	\begin{proof}Let $F \in \mathcal{F}$ be a graph on $n$ vertices. Since the graph $nK_1$ can be obtained from $F$ by deleting all its edges, it holds that $nK_1 \in \mathcal{F}$.
		Thus, for all graphs $G$ and $H$ such that $G \equiv_{\mathcal{F}} H$, by \cref{eq:coproduct},
		$|V(G)|^n = \hom(nK_1, G) = \hom(nK_1, H)  = |V(H)|^n$.
		Hence, $\hom(K_1, G) = \hom(K_1, H)$.
		Now let $F'$ denote the graph obtained from $F$ by deleting a vertex $v \in V(F)$.
		By deleting all incident edges, it can be assumed that $v$ is isolated in $F$ and that $F' + K_1 \cong F$. 
		Hence, for all $G$ and $H$ as above,
		\[
		\hom(F', G)\hom(K_1, G) = \hom(F, G) = \hom(F, H) = \hom(F', H)\hom(K_1, H).
		\]
		It follows that $\hom(F', G) = \hom(F', H)$ and $F' \in \mathcal{F}$.
	\end{proof}

	\begin{proof}[Proof of \cref{thm:complement}]
		Assuming \ref{rcl1}, let $G$ and $H$ be such that $G \equiv_{\mathcal{F}} H$ and let $F \in \mathcal{F}$. By \cref{lem:looping,eq:complement},
		\begin{equation} \label{eq:del-contr}
			\hom(F, \overline{G}) 
			= \hom(F, \widehat{G^\circ})
			= \sum_{\substack{F' \subseteq F, \\ V(F') = V(F)}} (-1)^{|E(F')|}  \sum_{L \subseteq E(F')} \hom(F' \oslash L, G).
		\end{equation}
		All simple graphs $F' \oslash L$ appearing in this sum are obtained from $F$ by repeated edge contractions or deletions. Hence, $\overline{G} \equiv_{\mathcal{F}} \overline{H}$.
		
		Assuming \ref{rcl2}, it is first shown that $\cl(\mathcal{F})$ is closed under deleting and contracting edges. 
		To that end, let $F \in \cl(\mathcal{F})$. Let $K$ be obtained from $F$ by deleting an edge $e \in E(F)$.
		Since $K$ has the same number of vertices as $F$ and precisely one edge less, the only summation indices $(F', L)$ in \cref{eq:del-contr} such that $F' \oslash L \cong K$
		satisfy 
		$|E(F')| = |E(K)| - 1$ and $L = \emptyset$.
		Hence, the coefficient of $\hom(K, -)$ in the linear combination obtained from \cref{eq:del-contr} by grouping isomorphic summation indices is a non-zero multiple of $(-1)^{|E(K)|}$ and $K \in \cl(\mathcal{F})$ by \cref{lem:lincomb}.
		
		Let now $K$ be obtained from $F$ by contracting a single edge $e$.
		By deleting edges, it can be supposed without loss of generality that there are no vertices in $F$ which are adjacent to both end vertices of $e$.
		By the following \cref{obs:edges-contract}, $|E(K)| = |E(F)| - 1$.
		
			\begin{claim} \label{obs:edges-contract}
			For a simple graph $F$ with $uv \in E(F)$, write $\Delta_F(uv) \coloneqq \{w \in V(F) \mid uv, vw, uw \in E(F)\}$ for the set of vertices inducing a triangle with the edge $uv$.
			Let $F$ be a simple graph and $uv \in E(F)$. Then
			\[
			|\Delta_F(uv)| = |E(F)| - |E(F \oslash \{uv\})| - 1.
			\]
		\end{claim}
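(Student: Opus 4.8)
The plan is to compute $|E(F)|$ and $|E(F \oslash \{uv\})|$ separately by sorting the edges of $F$ according to incidence with $u$ and $v$, and then to compare the two counts; inclusion--exclusion then produces the term $|\Delta_F(uv)|$.

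First I would unpack the definition of the contraction quotient in the special case $L = \{uv\}$. The relation $\sim_L$ merges $u$ and $v$ into a single class, call it $*$, and leaves every other vertex in a singleton class, so $V(F \oslash \{uv\}) = W \cup \{*\}$ where $W \coloneqq V(F) \setminus \{u,v\}$. I would check that $F \oslash \{uv\}$ has no loop at $*$: a loop would require an edge $xy \in E(F) \setminus \{uv\}$ with $x \sim_L u$ and $y \sim_L v$, which forces $\{x,y\} \subseteq \{u,v\}$ and hence $xy = uv$, a contradiction. (This is exactly the instance of the remark, made just before the claim, that $F \oslash L$ is simple precisely when no edge joins two vertices of one class.)

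Next I would set $N_u \coloneqq \{w \in W : uw \in E(F)\}$ and $N_v \coloneqq \{w \in W : vw \in E(F)\}$ for the neighbourhoods of $u$ and $v$ inside $W$, and let $A$ be the set of edges of $F$ with both endpoints in $W$. Since $F$ is simple, every edge of $F$ lies in exactly one of the disjoint cases: it is in $A$, incident to exactly one of $u,v$, or equal to $uv$; this gives $|E(F)| = |A| + |N_u| + |N_v| + 1$. For $F \oslash \{uv\}$, its edges among $W$ are in bijection with $A$, while an edge joins $w \in W$ to $*$ exactly when $uw \in E(F)$ or $vw \in E(F)$, i.e.\@ $w \in N_u \cup N_v$; combined with the no-loop observation this yields $|E(F \oslash \{uv\})| = |A| + |N_u \cup N_v|$.

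Subtracting and applying inclusion--exclusion, $|E(F)| - |E(F \oslash \{uv\})| - 1 = |N_u| + |N_v| - |N_u \cup N_v| = |N_u \cap N_v|$. Finally, since $uv \in E(F)$ by hypothesis, a vertex $w$ lies in $N_u \cap N_v$ if and only if $uv, uw, vw \in E(F)$, i.e.\@ $N_u \cap N_v = \Delta_F(uv)$, which is the desired identity. The argument is pure bookkeeping with no genuine obstacle; the only point that needs care is the verification that contracting $uv$ creates no loop, so that $|E(F \oslash \{uv\})|$ really counts the $|N_u \cup N_v|$ merged edges without repetition.
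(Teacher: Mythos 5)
Your proof is correct, and it takes a genuinely different route from the paper's. The paper computes the degree of every vertex of $K \coloneqq F \oslash \{uv\}$ in terms of degrees in $F$ (splitting cases according to whether the vertex is the contracted vertex, a triangle neighbour in $\Delta_F(uv)$, or neither) and then derives the edge count from the Handshaking Lemma. You instead count edges directly: you partition $E(F)$ by incidence with $\{u,v\}$ into the $uv$-edge, the edges from $u$ or from $v$ into $W = V(F)\setminus\{u,v\}$, and the edges within $W$; you observe that contraction preserves the $W$-edges and replaces the $N_u$- and $N_v$-edges by $N_u \cup N_v$ edges at the contracted vertex (with no loop, by your explicit check); and inclusion--exclusion then produces $|N_u \cap N_v| = |\Delta_F(uv)|$. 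Both arguments are short; yours is a touch more direct since it avoids the detour through degrees, while the paper's has the minor side benefit of recording the per-vertex degree change under contraction. Your loop-check at $*$ is exactly the point that keeps the bijection $A \leftrightarrow$ ($W$-edges of $K$) and the count $|N_u \cup N_v|$ honest, and it is correct.
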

		\begin{claimproof}
			Write $K = F \oslash \{uv\}$.
			By writing $x$ for the vertex of $K$ obtained by contraction and identifying $V(K) \setminus \{x\} = V(F) \setminus \{u, v\}$, for all $w \in V(K)$,
			\[
			\deg_{K} w = \begin{cases}
				\deg_F u + \deg_F v - |\Delta_{F}(uv)| - 2, &\text{if } w = x,\\
				\deg_F w - 1, & \text{if } w \in \Delta_{F}(uv), \\
				\deg_F w, & \text{otherwise}.
			\end{cases}
			\]
			The desired equation now follows readily from the Handshaking Lemma.
		\end{claimproof}
		
		Hence, all summation indices $(F', L)$ in \cref{eq:del-contr} such that $F' \oslash L \cong K$ must satisfy the following:
		\begin{enumerate}
			\item $L = \{e'\}$ for some edge $e' \in E(F)$.
			
			This is immediate from $|V(K)| = |V(F)| - 1$ and $V(F) = V(F')$.
			\item $F' = F$. 
			
			Indeed, if $F' \oslash \{e'\} \cong K$ for some $F' \subseteq F$ with $e' \in E(F')$ and $V(F') = V(F)$
			then, by \cref{obs:edges-contract},
			$|E(F')| - 1 = |E(F' \oslash \{e'\})| + |\Delta_{F'}(e')| \geq
			|E(F' \oslash \{e'\})| = |E(K)| =  |E(F)| - 1$ and thus $E(F') = E(F)$.
		\end{enumerate}
		Each of these summation indices contributes $(-1)^{|E(F')|} = (-1)^{|E(F)|}$ to the coefficient of $\hom(K, -)$ in the linear combination ranging over non-isomorphic graphs.
		Hence, this coefficient is non-zero.
		By \cref{lem:lincomb}, $K \in \cl(\mathcal{F})$.
		
		Hence, $\cl(\mathcal{F})$ is closed under deleting and contracting edges. By \cref{lem:minors}, it is closed under taking minors.
		The implication \ref{rcl4} $\Rightarrow$ \ref{rcl2} follows from \ref{rcl1} $\Rightarrow$ \ref{rcl2} for $\cl(\mathcal{F})$ since $\equiv_{\mathcal{F}}$ and $\equiv_{\cl(\mathcal{F})}$ coincide.
	\end{proof}

	\subsection{Taking Subgraphs and Preservation under Full Complements}
	\label{sec:full-complements}
	
	\Cref{thm:full-complement}, which relates the property of a graph class $\mathcal{F}$ to be closed under taking subgraphs to the property of $\equiv_{\mathcal{F}}$ to be preserved under taking full complements,
	can now be extracted from the insights in \cref{sec:complements}.
	Since our definition of the homomorphism distinguishing closure involves only simple graph in order to be aligned with \cite{roberson_oddomorphisms_2022}, \cref{thm:full-complement} deviates slightly from the other results in \cref{tab:overview}.
	This is because the relations $\equiv_{\mathcal{F}}$ and $\equiv_{\cl(\mathcal{F})}$ a priori coincide only on simple graphs and not necessarily on all graphs, a crucial point raised by a reviewer.
	
	\begin{theorem} \label{thm:full-complement}
		For a graph class $\mathcal{F}$ and the assertions
		\begin{enumerate}
			\item $\mathcal{F}$ is closed under deleting edges,\label{cl1}
			\item $\equiv_{\mathcal{F}}$ is \emph{preserved under taking full complements}, i.e.\@ for all simple graphs $G$ and $H$ it holds that $G \equiv_{\mathcal{F}} H$ if and only if $\widehat{G} \equiv_{\mathcal{F}} \widehat{H}$,\label{cl2}
			\item $\cl(\mathcal{F})$ is closed under deleting edges,\label{cl3}
			\item $\cl(\mathcal{F})$ is \emph{closed under taking subgraphs}, i.e.\@ it is closed under deleting edges and vertices,\label{cl4}
			\item $\equiv_{\cl(\mathcal{F})}$ is preserved under taking full complements,\label{cl5}
		\end{enumerate}
		the implications \ref{cl1} $\Rightarrow$ \ref{cl2} $\Rightarrow$ \ref{cl3} $\Leftrightarrow$ \ref{cl4} $\Leftrightarrow$ \ref{cl5} hold.
	\end{theorem}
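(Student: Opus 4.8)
The backbone of the argument is the complement expansion $\hom(F,\widehat G)=\sum_{F'\subseteq F,\ V(F')=V(F)}(-1)^{|E(F')|}\hom(F',G)$ of \cref{eq:complement}, exactly as in the proof of \cref{thm:complement}; the only genuinely new difficulty, as the text already flags, is that the full complement $\widehat G$ of a simple graph carries a loop at every vertex, so the closure $\cl(\mathcal F)$---defined with respect to simple target graphs only---cannot be applied to $\widehat G$ directly. I would establish the implications in the order $\ref{cl1}\Rightarrow\ref{cl2}$, then $\ref{cl3}\Leftrightarrow\ref{cl4}$, $\ref{cl4}\Rightarrow\ref{cl5}$, and $\ref{cl5}\Rightarrow\ref{cl3}$ to close that cycle, and finally the delicate $\ref{cl2}\Rightarrow\ref{cl3}$.

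For $\ref{cl1}\Rightarrow\ref{cl2}$: if $\mathcal F$ is closed under deleting edges, then for every $F\in\mathcal F$ all subgraphs $F'$ with $V(F')=V(F)$ occurring on the right of \cref{eq:complement} already lie in $\mathcal F$, so $G\equiv_{\mathcal F}H$ forces $\hom(F,\widehat G)=\hom(F,\widehat H)$ for all $F\in\mathcal F$, i.e.\ $\widehat G\equiv_{\mathcal F}\widehat H$; the reverse implication follows by the same computation applied to $\widehat{\widehat G}=G$ and $\widehat{\widehat H}=H$, which is legitimate because \cref{eq:complement} holds for arbitrary, not necessarily simple, target graphs. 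For $\ref{cl3}\Leftrightarrow\ref{cl4}$: $\ref{cl4}\Rightarrow\ref{cl3}$ is trivial and $\ref{cl3}\Rightarrow\ref{cl4}$ is \cref{lem:minors} applied to the homomorphism distinguishing closed class $\cl(\mathcal F)$. For $\ref{cl4}\Rightarrow\ref{cl5}$: $\cl(\mathcal F)$ is then closed under deleting edges, so applying the already-established implication $\ref{cl1}\Rightarrow\ref{cl2}$ with $\cl(\mathcal F)$ in place of $\mathcal F$ yields that $\equiv_{\cl(\mathcal F)}$ is preserved under full complements.

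For $\ref{cl5}\Rightarrow\ref{cl3}$: let $F\in\cl(\mathcal F)$ and $e\in E(F)$. For simple $G,H$ with $G\equiv_{\mathcal F}H$, equivalently $G\equiv_{\cl(\mathcal F)}H$, assertion \ref{cl5} gives $\widehat G\equiv_{\cl(\mathcal F)}\widehat H$, which by the very definition of $\equiv_{\cl(\mathcal F)}$ means $\hom(K,\widehat G)=\hom(K,\widehat H)$ for \emph{every} $K\in\cl(\mathcal F)$, in particular for $K=F$. By \cref{eq:complement} this says precisely that $\equiv_{\mathcal F}$ determines the linear combination $\sum_{F'\subseteq F,\ V(F')=V(F)}(-1)^{|E(F')|}\hom(F',-)$; grouping the $F'$ by isomorphism type, the coefficient of each isomorphism class is a signed positive integer and hence nonzero, so \cref{lem:lincomb} places every such $F'$---in particular $F-e$---in $\cl(\mathcal F)$. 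Thus $\cl(\mathcal F)$ is closed under deleting edges, and by \cref{lem:minors} under taking subgraphs, which also gives $\ref{cl5}\Rightarrow\ref{cl4}$.

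The implication $\ref{cl2}\Rightarrow\ref{cl3}$ is the one I expect to be the main obstacle, precisely because, unlike in \cref{thm:complement}, one cannot exploit that $\overline G$ is simple. A first reduction works cleanly: invoking \ref{cl2} only for graphs \emph{in} $\mathcal F$, for $F\in\mathcal F$ and $G\equiv_{\mathcal F}H$ we have $\widehat G\equiv_{\mathcal F}\widehat H$ and hence $\hom(F,\widehat G)=\hom(F,\widehat H)$, so by \cref{eq:complement}, grouping, and \cref{lem:lincomb} every subgraph $F'$ of $F$ with $V(F')=V(F)$ lies in $\cl(\mathcal F)$; consequently the class $\mathcal F^{-}$ of all such subgraphs of members of $\mathcal F$ satisfies $\mathcal F\subseteq\mathcal F^{-}\subseteq\cl(\mathcal F)$, so $\cl(\mathcal F)=\cl(\mathcal F^{-})$ and one may assume $\mathcal F$ itself is closed under deleting edges. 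What is then left is to prove \ref{cl3} for an edge-deletion-closed class, and here the loops on $\widehat G$ genuinely bite: applying \cref{eq:complement} to an $F\in\cl(\mathcal F)$ not lying in $\mathcal F$ simply reproduces the very subgraphs of $F$ that one is trying to certify. I would resolve this by showing that for an edge-deletion-closed $\mathcal F$ the closure $\cl(\mathcal F)$ taken over simple targets agrees with the analogous closure taken over all (possibly looped) targets---rewriting, via \cref{lem:looping}, homomorphism counts into the reflexive graphs $\widehat G=(\overline G)^{\circ}$ as homomorphism counts into simple graphs---after which $\widehat G\equiv_{\mathcal F}\widehat H$ may be promoted to $\hom(F,\widehat G)=\hom(F,\widehat H)$ for all $F\in\cl(\mathcal F)$ and the argument of the previous paragraph applies verbatim. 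Once $\cl(\mathcal F)$ is known to be closed under deleting edges, \ref{cl4} and \ref{cl5} follow from the equivalences already proved.
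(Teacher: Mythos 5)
Your treatment of the implications $\ref{cl1}\Rightarrow\ref{cl2}$, $\ref{cl3}\Leftrightarrow\ref{cl4}$, and $\ref{cl4}\Rightarrow\ref{cl5}$ agrees with the paper. Your direct proof of $\ref{cl5}\Rightarrow\ref{cl3}$ is correct and is actually a nice simplification over the paper's route: because \ref{cl5} is phrased in terms of $\equiv_{\cl(\mathcal F)}$, the equivalence $\widehat G\equiv_{\cl(\mathcal F)}\widehat H$ hands you $\hom(F,\widehat G)=\hom(F,\widehat H)$ for \emph{every} $F\in\cl(\mathcal F)$ for free, and \cref{eq:complement} plus \cref{lem:lincomb} (applicable since $G,H$ are simple) finish it. The paper instead routes $\ref{cl5}\Rightarrow\ref{cl3}$ through the hard implication $\ref{cl2}\Rightarrow\ref{cl3}$ applied to $\cl(\mathcal F)$, so your argument genuinely shortens this leg.

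However, for $\ref{cl2}\Rightarrow\ref{cl3}$ your proposal has a real gap. You correctly isolate the obstruction: to apply \cref{lem:lincomb} to an arbitrary $F\in\cl(\mathcal F)$ one needs $\hom(F,\widehat G)=\hom(F,\widehat H)$, but $G\equiv_{\mathcal F}H$ only yields $\widehat G\equiv_{\mathcal F}\widehat H$, and since $\widehat G,\widehat H$ carry loops, this cannot be upgraded to $\widehat G\equiv_{\cl(\mathcal F)}\widehat H$ by appealing to ``$\equiv_{\mathcal F}$ and $\equiv_{\cl(\mathcal F)}$ coincide'' --- that coincidence is only guaranteed over simple targets. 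Your proposed fix, ``rewriting via \cref{lem:looping}'', does not close this gap: \cref{lem:looping} rewrites $\hom(F,\widehat G)=\hom(F,(\overline G)^{\circ})=\sum_{L}\hom(F\oslash L,\overline G)$, which shifts the problem to homomorphism counts into $\overline G$, for which you have no control either (you do not know $\overline G\equiv_{\mathcal F}\overline H$ under \ref{cl2}, and the $F\oslash L$ are minors, not edge-deletions). The paper resolves this with a specific two-step device you have not supplied: first, multiply by an arbitrary graph $K$ with a loop at every vertex so that $\widehat{G\times K}$ and $\widehat{H\times K}$ become \emph{simple}, allowing \ref{cl2} and the simple-target coincidence of $\equiv_{\mathcal F}$ and $\equiv_{\cl(\mathcal F)}$ to be applied; second, vary $K$ over a finite basis and invert the matrix $(\hom(F,G^{\circ}))_{F,G}$, whose invertibility is established by combining \cref{lem:looping} with the classical invertibility of $(\hom(F,G))_{F,G}$ and a triangularity observation. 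Without this product-and-inversion argument (or a genuine substitute), the promotion you assert --- ``$\widehat G\equiv_{\mathcal F}\widehat H$ may be promoted to $\hom(F,\widehat G)=\hom(F,\widehat H)$ for all $F\in\cl(\mathcal F)$'' --- is unproved. Your preliminary reduction (replacing $\mathcal F$ by its edge-deletion closure $\mathcal F^{-}$) is harmless but does not make the missing step any easier.
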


	\begin{proof}Assuming \ref{cl1}, by \cref{eq:complement}, for all not necessarily simple graphs $G$ and $H$, if $G \equiv_{\mathcal{F}} H$ and $\mathcal{F}$ is closed under deleting edges it holds that $\hom(F, \widehat{G}) = \hom(F, \widehat{H})$ for all $F \in \mathcal{F}$ as all graphs $F'$ appearing in \cref{eq:complement} are obtained from $F$ by deleting edges. 
		Hence, \ref{cl2} holds.
		
		Suppose \ref{cl2} holds and let $F \in \cl(\mathcal{F})$.
Since \cref{lem:lincomb} is not directly applicable since the condition in \ref{cl2} involves graphs with loops, we first prove the following claim:
		
		\begin{claim} \label{cl:full-complement}
For all graph $G$ and $H$ with loops at every vertex, if $G \equiv_{\mathcal{F}} H$ then $\hom(F, G) = \hom(F, H)$.
		\end{claim}
		\begin{claimproof}
			Let $K$ be an arbitrary graph with a loop at every vertex. Then
			$G \times K$ and $H \times K$ are graph with loops at every vertex.
			Hence, their full complements $\widehat{G \times K}$ and $\widehat{H \times K}$ are simple graphs.
			
			By \cref{eq:product}, if $G \equiv_{\mathcal{F}} H$ then 
			$G \times K \equiv_{\mathcal{F}} H \times K$.
			By \ref{cl2}, $\widehat{G \times K} \equiv_{\mathcal{F}} \widehat{H \times K}$, which implies that $\hom(F, \widehat{G \times K}) = \hom(F, \widehat{H \times K})$ since $\equiv_{\mathcal{F}}$ and $\equiv_{\cl(\mathcal{F})}$ coincide on simple graphs.
			By \cref{eq:complement,eq:product}, this in turn implies that
			\begin{equation} \label{eq:full-complement}
				\sum_{\substack{F' \subseteq F \\ V(F') = V(F)}} (-1)^{|E(F')|} \hom(F', G)\hom(F', K)  = \sum_{\substack{F' \subseteq F \\ V(F') = V(F)}} (-1)^{|E(F')|} \hom(F', H)\hom(F', K).
			\end{equation}

			Let $n \coloneqq |V(F)|$ and write $\mathcal{L}$ for the set of all simple graphs on at most $n$ vertices. We claim that the matrix $(\hom(F, G^\circ)))_{F, G \in \mathcal{L}}$ is invertible. Indeed, by \cref{lem:looping},
			\[
			\hom(F, G^\circ) = \sum_{L \subseteq E(F)} \hom(F \oslash L, G)
			= \sum_{F' \in \mathcal{L}} |\{L \subseteq E(F) \mid F \oslash L \cong F'\}| \hom(F', G).
			\]
			By classical arguments \cite{lovasz_operations_1967}, the matrix $(\hom(F, G))_{F, G \in \mathcal{L}}$ is invertible. When ordering the elements of $\mathcal{L}$ first by number of vertices and then by number of edges, the matrix $(|\{L \subseteq E(F) \mid F \oslash L \cong F'\}|)_{F, F' \in \mathcal{L}}$ is upper triangular and all its diagonal entries are $1$ since $F \oslash L \cong F$ iff $L = \emptyset$. Hence, $(\hom(F, G^\circ)))_{F, G \in \mathcal{L}}$ is invertible as the product of two invertible matrices.
			As in the proof of \cref{lem:lincomb},
			one may multiply \cref{eq:full-complement} with the inverse of this matrix to conclude that $\hom(F, G) = \hom(F, H)$.
		\end{claimproof}
		
		By \ref{cl2}, for all simple graphs $G$ and $H$, the assumption $G \equiv_{\mathcal{F}} H $ implies that $\widehat{G} \equiv_{\mathcal{F}} \widehat{H}$.
		Hence, by \cref{cl:full-complement}, $\hom(F, \widehat{G}) = \hom(F, \widehat{H})$  for all $F \in \cl(\mathcal{F})$.
		Finally, by \cref{eq:complement},
		\[
			\sum_{\substack{F' \subseteq F \\ V(F') = V(F)}} (-1)^{|E(F')|} \hom(F', G)
			= \sum_{\substack{F' \subseteq F \\ V(F') = V(F)}} (-1)^{|E(F')|} \hom(F', H).
		\]
			Since $G$ and $H$ are simple, we are in the setting of \cref{lem:lincomb}.
		Let $K$ be any graph obtained from $F$ by deleting edges, i.e.\@ $V(K) = V(F)$ and $E(K) \subseteq E(F)$.
		Each summation index $F'$ in \cref{eq:complement}  such that $F' \cong K$ contributes $(-1)^{|E(F')|} = (-1)^{|E(K)|}$.
		Hence, the coefficient of $\hom(K, -)$ in the linear combination obtained from the one above by grouping isomorphic summation indices is non-zero.
		By \cref{lem:lincomb}, it holds that $F' \in \cl(\mathcal{F})$.
		
		The implication \ref{cl3} $\Rightarrow$ \ref{cl4} follows from \cref{lem:minors}.
		The implication \ref{cl4} $\Rightarrow$ \ref{cl5} follows from \ref{cl1}~$\Rightarrow$~\ref{cl2}.
		The final implication \ref{cl5} $\Rightarrow$ \ref{cl3} follows from \ref{cl2} $\Rightarrow$ \ref{cl3} observing that $\cl(\cl(\mathcal{F})) = \cl(\mathcal{F})$.
	\end{proof}

	\subsection{Taking Induced Subgraphs, Contracting Edges, and Lexicographic Products}
	
	In this section, it is shown that a homomorphism distinguishing closed graph class is closed under taking induced subgraphs (contracting edges) if and only if its homomorphism indistinguishability relation is preserved under lexicographic products with a fixed graph from the left (from the right).
	
	Examples for equivalence relations preserved under lexicographic products related to chromatic graph parameters are listed in \cref{cor:no-hom}. 
	Further examples are of model-theoretic nature.
	It is for example easy to see that winning strategies of the Duplicator player in Hella's bijective pebble game~\cite{hella_logical_1996} can be composed along lexicographic products.
	
	A correspondence between taking induced subgraphs and left lexicographic products is established by the following \cref{prop:lexprod-indsub}:
	
	\begin{theorem} \label{prop:lexprod-indsub}
		For a graph class $\mathcal{F}$ and the assertions
		\begin{enumerate}
			\item $\mathcal{F}$ is closed under taking induced subgraphs,\label{indsub1}
			\item $\equiv_{\mathcal{F}}$ is \emph{preserved under left lexicographic products}, i.e.\@ for all simple graphs $G$, $H$, and $H'$, if $H \equiv_{\mathcal{F}} H'$ then $G \cdot H \equiv_{\mathcal{F}} G \cdot H'$,\label{indsub2}
			\item $\cl(\mathcal{F})$ is closed under taking induced subgraphs.\label{indsub3}
		\end{enumerate}
		the implications \ref{indsub1} $\Rightarrow$ \ref{indsub2} $\Leftrightarrow$ \ref{indsub3} hold.
	\end{theorem}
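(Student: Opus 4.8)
The plan is to follow the scheme of \cref{sec:summands}: first express $\hom(F, G \cdot H)$ through numbers of homomorphisms from induced subgraphs of $F$ into $H$, then read off both implications. Concretely, I would first establish, as a lemma analogous to \cref{lem:looping}, the identity
\[
	\hom(F, G \cdot H) = \sum_{\mathcal{P} \in \Pi(V(F))} \inj(F/\mathcal{P}, G) \prod_{P \in \mathcal{P}} \hom(F[P], H)
\]
for all simple graphs $F$, $G$, $H$, where $\inj(A, B)$ denotes the number of injective homomorphisms $A \to B$. To prove it, decompose a homomorphism $\varphi \colon F \to G \cdot H$ into its coordinate maps $\varphi_1 \colon V(F) \to V(G)$ and $\varphi_2 \colon V(F) \to V(H)$. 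By the definition of $G \cdot H$, $\varphi$ is a homomorphism iff for every $uv \in E(F)$ either $\varphi_1(u) = \varphi_1(v)$ and $\varphi_2(u)\varphi_2(v) \in E(H)$, or $\varphi_1(u)\varphi_1(v) \in E(G)$. Letting $\mathcal{P}$ be the partition of $V(F)$ into the fibres of $\varphi_1$, this means precisely that the injection $\mathcal{P} \to V(G)$ induced by $\varphi_1$ is a homomorphism $F/\mathcal{P} \to G$ and that $\varphi_2$ restricts to a homomorphism $F[P] \to H$ on every block $P$; conversely any such data assembles into a homomorphism, and since the restrictions of $\varphi_2$ to distinct blocks may be chosen independently, the identity follows. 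Via \eqref{eq:coproduct} the right-hand side is a linear combination of the quantities $\hom(\coprod_{P \in \mathcal{P}} F[P], H)$, i.e.\@ of homomorphism counts from disjoint unions of induced subgraphs of $F$.

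Granting this, \ref{indsub1} $\Rightarrow$ \ref{indsub2} is immediate: if $\mathcal{F}$ is closed under taking induced subgraphs and $F \in \mathcal{F}$, then $F[P] \in \mathcal{F}$ for every block $P$ occurring above, so $H \equiv_{\mathcal{F}} H'$ forces $\hom(F[P], H) = \hom(F[P], H')$, hence $\hom(F, G \cdot H) = \hom(F, G \cdot H')$. The implication \ref{indsub3} $\Rightarrow$ \ref{indsub2} then follows by applying \ref{indsub1} $\Rightarrow$ \ref{indsub2} to the class $\cl(\mathcal{F})$, using that $\equiv_{\mathcal{F}}$ and $\equiv_{\cl(\mathcal{F})}$ coincide.

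The substance is in \ref{indsub2} $\Rightarrow$ \ref{indsub3}. Assume \ref{indsub2}; since $\equiv_{\mathcal{F}} = \equiv_{\cl(\mathcal{F})}$, for every $F \in \cl(\mathcal{F})$ and every $n$, $G \equiv_{\mathcal{F}} H$ yields $K_n \cdot G \equiv_{\mathcal{F}} K_n \cdot H$ and hence $\hom(F, K_n \cdot G) = \hom(F, K_n \cdot H)$. Fix $F \in \cl(\mathcal{F})$ and set $n \coloneqq |V(F)|$. In the displayed identity with $G = K_n$, every coefficient $\inj(F/\mathcal{P}, K_n)$ is at least $1$ because $F/\mathcal{P}$ has at most $n$ vertices and every injection into $V(K_n)$ is a homomorphism. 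Grouping the summands by the isomorphism type of $\coprod_{P \in \mathcal{P}} F[P]$ therefore yields a linear combination over pairwise non-isomorphic graphs with strictly positive coefficients which $\equiv_{\mathcal{F}}$ determines, so \cref{lem:lincomb} gives $\coprod_{P \in \mathcal{P}} F[P] \in \cl(\mathcal{F})$ for every $\mathcal{P} \in \Pi(V(F))$. Choosing for $S \subseteq V(F)$ the partition consisting of $S$ and the singletons of $V(F) \setminus S$ shows $F[S] + m K_1 \in \cl(\mathcal{F})$ where $m \coloneqq |V(F)| - |S|$. To conclude $F[S] \in \cl(\mathcal{F})$ I would strip off the isolated vertices: applying the above to some $F_0 \in \mathcal{F}$ with a vertex (the remaining case being trivial) and $S = \emptyset$ gives $|V(F_0)| K_1 \in \cl(\mathcal{F})$, whence $|V(G)| = |V(H)|$ whenever $G \equiv_{\mathcal{F}} H$, i.e.\@ $K_1 \in \cl(\mathcal{F})$; and if $A + K_1 \in \cl(\mathcal{F})$ and $K_1 \in \cl(\mathcal{F})$ then $\hom(A, G)|V(G)| = \hom(A, H)|V(H)|$ and $|V(G)| = |V(H)|$ by \eqref{eq:coproduct}, so $\hom(A, G) = \hom(A, H)$ and $A \in \cl(\mathcal{F})$. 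Iterating this $m$ times removes all copies of $K_1$ from $F[S] + mK_1$, giving $F[S] \in \cl(\mathcal{F})$.

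I expect the main obstacle to be conceptual rather than computational. The identity only produces homomorphism counts from disjoint unions of induced subgraphs of $F$ that span all of $V(F)$; in particular $F[S]$ for a proper subset $S$ never occurs alone, so \cref{lem:lincomb} cannot place it in $\cl(\mathcal{F})$ directly. Realising that one must instead derive $F[S] + m K_1 \in \cl(\mathcal{F})$, then separately obtain $K_1 \in \cl(\mathcal{F})$ (for which the complete-graph substitution is tailored), and then peel off isolated vertices one at a time — while handling the graph on no vertices as a degenerate case — is the part of the argument that requires care; the rest is bookkeeping of the kind already carried out in \cref{sec:summands,sec:complements}.
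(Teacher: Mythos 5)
Your proof is correct and follows essentially the same route as the paper's. The only substantive difference is the form of the decomposition lemma: the paper's \cref{thm:lexprod-hom} expresses $\hom(F, G\cdot H)$ as a sum over partitions $\mathcal{R}$ whose blocks induce \emph{connected} subgraphs, with coefficients $\hom(F/\mathcal{R}, G)$, whereas you sum over \emph{all} partitions $\mathcal{P}$ with coefficients $\inj(F/\mathcal{P}, G)$. Both identities are correct; they are related by refining each fibre of $\varphi_1$ into the connected components of the induced subgraph, which trades the injectivity of the quotient map for arbitrary homomorphisms from a finer (connected-block) quotient. The paper's version is chosen because it is reused verbatim for \cref{prop:lexprod-contract}, where $F/\mathcal{R}$ must be an edge contraction of $F$ and hence $\mathcal{R}$ must have connected blocks; for the present theorem alone your version is just as serviceable, and arguably yields a slightly more direct bijection. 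The remaining steps match the paper's: set $G = K_n$ with $n = |V(F)|$ so all coefficients are strictly positive; group by isomorphism type and apply \cref{lem:lincomb} to get $\coprod_{P \in \mathcal{P}} F[P] \in \cl(\mathcal{F})$ for every partition; take the discrete partition to obtain $K_1 \in \cl(\mathcal{F})$ and the partition with the one nontrivial block $S$ (the paper instead uses its connected components) to obtain $F[S] + mK_1 \in \cl(\mathcal{F})$; and peel off the $K_1$'s as in \cref{lem:minors}. Your explicit handling of the degenerate case where $\mathcal{F}$ contains no graph with a vertex is a minor point the paper leaves implicit.
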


	An example for a lexicographic product from the right is the \emph{$n$-blow-up} $G \cdot \overline{K_n}$ of a graph~$G$.
	It is well-known \cite{lovasz_large_2012} that every homomorphism indistinguishability relation is preserved under blow-ups. 
	Preservation under arbitrary lexicographic products from the right, however, is a non-trivial property corresponding to the associated graph class being closed under edge contractions:

	\begin{theorem} \label{prop:lexprod-contract}
		For a graph class $\mathcal{F}$ and the assertions
		\begin{enumerate}
			\item $\mathcal{F}$ is closed under edge contractions,\label{contr1}
			\item $\equiv_{\mathcal{F}}$ is \emph{preserved under right lexicographic products}, i.e.\@  for all simple graphs $G$, $G'$, and $H$, if $G \equiv_{\mathcal{F}} G'$ then $G \cdot H \equiv_{\mathcal{F}} G' \cdot H$,\label{contr2}
			\item $\cl(\mathcal{F})$ is closed under edge contractions.\label{contr3}
		\end{enumerate}
		the implications \ref{contr1} $\Rightarrow$ \ref{contr2} $\Leftrightarrow$ \ref{contr3} hold.
	\end{theorem}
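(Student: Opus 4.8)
The plan is to follow the two-step strategy used for \cref{thm:taking-summands,thm:complement}: first express $\hom(F, G \cdot H)$ as a linear combination of quantities $\hom(F', G)$ where the $F'$ range over edge contractions of $F$, and then invoke \cref{lem:lincomb}. The identity I would establish is
\[
	\hom(F, G \cdot H) = \sum_{\mathcal{Q}} \hom(F/\mathcal{Q}, G) \prod_{Q \in \mathcal{Q}} \hom(F[Q], H),
\]
where $\mathcal{Q}$ ranges over all partitions of $V(F)$ such that $F[Q]$ is connected for every $Q \in \mathcal{Q}$. Given a homomorphism $h = (p, q) \colon F \to G \cdot H$, one lets $\mathcal{Q}$ be the partition of $V(F)$ into the connected components of the spanning subgraph of $F$ formed by the edges $uv$ with $p(u) = p(v)$. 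Then $p$ is constant on each block, the induced map $F/\mathcal{Q} \to G$ is a homomorphism (an edge of $F/\mathcal{Q}$ comes from an edge of $F$ whose endpoints lie in different blocks, hence receive distinct $p$-values that are adjacent in $G$), every edge inside a block is sent by $q$ to an edge of $H$, and conversely any such triple of data reassembles into a homomorphism $F \to G \cdot H$ inducing $\mathcal{Q}$. Checking that this correspondence is a bijection is the technical core, but it is routine bookkeeping. Crucially, because the blocks of $\mathcal{Q}$ are connected, each $F/\mathcal{Q}$ is obtained from $F$ by contracting edges in the sense of the paper; thus if $\mathcal{F}$ is closed under edge contractions and $F \in \mathcal{F}$, then every $\hom(F/\mathcal{Q}, -)$ occurring above is $\equiv_{\mathcal{F}}$-invariant while the bracketed products depend only on $F$ and $H$, which yields \ref{contr1} $\Rightarrow$ \ref{contr2}.

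For \ref{contr2} $\Rightarrow$ \ref{contr3} it suffices to show that $\cl(\mathcal{F})$ is closed under contracting a single edge, since an arbitrary edge contraction is reached by contracting the edges of a spanning forest of the blocks one at a time and $\cl(\cl(\mathcal{F})) = \cl(\mathcal{F})$. So fix $F \in \cl(\mathcal{F})$, an edge $e \in E(F)$, put $K \coloneqq F \oslash \{e\}$, and specialise the identity to $H \coloneqq K_2$. Grouping the resulting linear combination in $G$ by the isomorphism type of $F/\mathcal{Q}$ gives $\hom(F, G \cdot K_2) = \sum_L \beta_L \hom(L, G)$ with non-negative integer coefficients $\beta_L = \sum_{\mathcal{Q} \colon F/\mathcal{Q} \cong L} \prod_{Q \in \mathcal{Q}} \hom(F[Q], K_2)$, the sum ranging over pairwise non-isomorphic simple graphs $L$. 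Since $\equiv_{\mathcal{F}}$ and $\equiv_{\cl(\mathcal{F})}$ coincide on simple graphs, $F \in \cl(\mathcal{F})$, and $\equiv_{\mathcal{F}}$ is preserved under right lexicographic products, the relation $\equiv_{\mathcal{F}}$ determines this linear combination, so \cref{lem:lincomb} applies to the (finitely many, pairwise non-isomorphic) graphs $L$ with $\beta_L \neq 0$. Finally I would compute $\beta_K$: a partition $\mathcal{Q}$ with $F/\mathcal{Q} \cong K$ must have $|V(F)| - 1$ blocks, hence exactly one block of size two, which being connected is an edge $e'$ of $F$ with $F \oslash \{e'\} \cong K$; each such $\mathcal{Q}$ contributes $\hom(K_2, K_2)\,\hom(K_1, K_2)^{|V(F)| - 2} = 2^{|V(F)| - 1}$, so $\beta_K \geq 2^{|V(F)| - 1} > 0$ (the index $e' = e$ alone witnesses this). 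Hence $K \in \cl(\mathcal{F})$.

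The remaining implication \ref{contr3} $\Rightarrow$ \ref{contr2} follows by applying \ref{contr1} $\Rightarrow$ \ref{contr2} to $\cl(\mathcal{F})$ in place of $\mathcal{F}$, noting that $\equiv_{\mathcal{F}}$ and $\equiv_{\cl(\mathcal{F})}$ agree on simple graphs and that $G \cdot H$ is simple whenever $G$ and $H$ are. I expect the main obstacle to be pinning down the identity exactly: the subtle point is that one must not sum over all partitions of $V(F)$ but only over those with connected blocks — equivalently, one replaces the fibres of $p$ by their refinement into connected components — and this is precisely what forces the quotients $F/\mathcal{Q}$ to be edge contractions rather than arbitrary minors. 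A pleasant contrast with \cref{thm:complement} is that the coefficients here carry no signs, so there is no cancellation to control and no need to delete edges beforehand; taking $H = K_2$ does everything.
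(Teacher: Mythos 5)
Your proof is correct and follows essentially the same route as the paper: you establish the lexicographic-product formula (which is exactly \cref{thm:lexprod-hom}, since $\hom(\coprod_{R\in\mathcal{R}} F[R], H)=\prod_{R\in\mathcal{R}}\hom(F[R],H)$ by \cref{eq:coproduct}), and then apply \cref{lem:lincomb} after specialising~$H$. The one place you deviate is the choice $H=K_2$, which forces you to reduce to single-edge contractions because $\hom(F[Q],K_2)$ can vanish for non-bipartite blocks. The paper instead takes $H=K_n$ with $n=|V(F)|$, so that $\hom(F[U],K_n)>0$ for \emph{every} $U\subseteq V(F)$; then the coefficient of $\hom(F/\mathcal{P},-)$ in the grouped linear combination is a sum of strictly positive terms for any connected-blocks partition $\mathcal{P}$, and one concludes $F/\mathcal{P}\in\cl(\mathcal{F})$ directly without the spanning-forest reduction. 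Both arguments are sound; the $K_n$ choice is slightly tidier and avoids the extra induction on single-edge contractions (whose soundness you do argue correctly, but which is an avoidable detour). Your coefficient computation $\beta_K\geq 2^{|V(F)|-1}>0$ for $K=F\oslash\{e\}$ is correct, and your closing remark that the absence of signs makes this case easier than \cref{thm:complement} is exactly the right observation.
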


	For the proofs of \cref{prop:lexprod-contract,prop:lexprod-indsub}, homomorphism counts $\hom(F, G \cdot H)$ are written as linear combinations of homomorphism counts $\hom(F', H)$ where $F'$ ranges over induced subgraphs of $F$ in the case of \cref{prop:lexprod-indsub} and over graphs obtained from $F'$ by contracting edges in the case of \cref{prop:lexprod-contract}.
	The following succinct formula for counts of homomorphisms into a lexicographic product may  be of independent interest.
	
	\begin{theorem} \label{thm:lexprod-hom}
		Let $F$, $G$, and $H$ be simple graphs. Then
		\[
			\hom(F, G \cdot H) = \sum_{\mathcal{R}}
			\hom(F/\mathcal{R}, G) \hom(\coprod_{R \in \mathcal{R}} F[R], H)
		\]
		where the outer sum ranges over all $\mathcal{R} \in \Pi(V(F))$ such that $F[R]$ is connected for all $R \in \mathcal{R}$.
	\end{theorem}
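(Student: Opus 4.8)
The plan is to set up a bijection between homomorphisms $f \colon F \to G \cdot H$ and triples consisting of a connected partition $\mathcal{R}$ of $V(F)$, a homomorphism $F/\mathcal{R} \to G$, and a homomorphism $\coprod_{R \in \mathcal{R}} F[R] \to H$. The key structural observation is that a vertex $gh$ of $G \cdot H$ carries two coordinates, so any map $f$ decomposes as $f = (f_G, f_H)$ with $f_G \colon V(F) \to V(G)$ and $f_H \colon V(F) \to V(H)$. The definition of the lexicographic product says: $f(u)f(v) \in E(G \cdot H)$ iff either $f_G(u) = f_G(v)$ and $f_H(u)f_H(v) \in E(H)$, or $f_G(u)f_G(v) \in E(G)$.

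First I would define $\mathcal{R} = \mathcal{R}(f)$ to be the partition of $V(F)$ induced by the equivalence relation generated by declaring $u \approx v$ whenever $uv \in E(F)$ and $f_G(u) = f_G(v)$. I would then check that each block $R \in \mathcal{R}$ satisfies: $F[R]$ is connected (immediate from the generating relation, since the relation only identifies endpoints of edges of $F$) and $f_G$ is constant on $R$. The latter requires a short argument: if $u \approx v$ via a single edge then $f_G(u) = f_G(v)$ by definition, and this propagates along $F$-paths within a block. Granting this, $f_G$ descends to a well-defined map $\bar f_G \colon \mathcal{R} \to V(G)$. I would verify $\bar f_G$ is a homomorphism $F/\mathcal{R} \to G$: if $R \neq R'$ are adjacent in $F/\mathcal{R}$, pick $u \in R$, $v \in R'$ with $uv \in E(F)$; since $u \not\approx v$ we must have $f_G(u) \neq f_G(v)$, and because $f(u)f(v) \in E(G \cdot H)$ the only remaining option in the product definition forces $f_G(u)f_G(v) \in E(G)$, i.e.\ $\bar f_G(R)\bar f_G(R') \in E(G)$. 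Finally $f_H$ restricted to each $R$ gives a homomorphism $F[R] \to H$: an edge $uv \in E(F)$ with $u, v \in R$ has $f_G(u) = f_G(v)$, so the product definition forces $f_H(u)f_H(v) \in E(H)$; assembling over all blocks yields a homomorphism $\coprod_{R \in \mathcal{R}} F[R] \to H$.

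For the reverse map, given $\mathcal{R}$ with all $F[R]$ connected, a homomorphism $\phi \colon F/\mathcal{R} \to G$, and $\psi \colon \coprod_{R \in \mathcal{R}} F[R] \to H$, define $f(v) = (\phi([v]_{\mathcal{R}}), \psi(v))$. I would check $f$ is a homomorphism into $G \cdot H$ by casing on an edge $uv \in E(F)$: if $u, v$ lie in the same block $R$, then $\phi$-coordinates agree and $\psi(u)\psi(v) \in E(H)$ since $uv$ is an edge of $F[R]$, so $f(u)f(v) \in E(G \cdot H)$; if $u \in R \neq R' \ni v$, then $R R' \in E(F/\mathcal{R})$ witnessed by the edge $uv$, so $\phi([u])\phi([v]) \in E(G)$ and again $f(u)f(v) \in E(G \cdot H)$. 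Then I would argue the two constructions are mutually inverse: starting from $f$, recovering $\mathcal{R}(f)$, $\bar f_G$, $f_H|$ and reassembling gives back $f$ on the nose; starting from $(\mathcal{R}, \phi, \psi)$, the recovered partition equals $\mathcal{R}$ (here connectedness of the blocks is what guarantees the generated equivalence relation has exactly the blocks of $\mathcal{R}$ rather than a coarser partition, and the edges \emph{between} blocks do not get identified because $\phi$ is a homomorphism into the simple graph $G$, so distinct adjacent blocks get distinct $G$-vertices) and the recovered $\phi, \psi$ agree with the originals. Summing $1$ over the right-hand side of the bijection, grouped by $\mathcal{R}$, then gives $\sum_{\mathcal{R}} \hom(F/\mathcal{R}, G) \hom(\coprod_{R \in \mathcal{R}} F[R], H)$ by \cref{eq:product}-style multiplicativity of $\hom$ into $H$ over connected components (\cref{eq:coproduct}), yielding the claimed identity.

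The main obstacle I anticipate is the well-definedness of $\mathcal{R}(f)$ together with the verification that it recovers exactly the input partition in the round trip. Specifically, one must argue that distinct adjacent blocks are never merged: if $R, R'$ are distinct blocks of the input $\mathcal{R}$ and $uv \in E(F)$ with $u \in R$, $v \in R'$, then in $f$ we have $f_G(u) = \phi(R) \neq \phi(R') = f_G(v)$ because $R R' \in E(F/\mathcal{R})$ and $\phi$ maps it to a genuine edge (hence distinct endpoints) of the simple graph $G$; this is precisely where simplicity of $G$ and the connectedness requirement on blocks both get used. Everything else is routine case analysis on the product definition, but this point deserves care and is the crux of the correctness of the bijection.
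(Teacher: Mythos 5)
Your proof is correct and follows essentially the same route as the paper's: both establish the bijection between homomorphisms $F \to G \cdot H$ and triples $(\mathcal{R}, g, h)$, with the partition $\mathcal{R}$ being (equivalently) defined either as the equivalence closure of "adjacent with equal $\pi_G$-image" or, as the paper does, by splitting the fibres of $\pi_G \circ f$ into connected components. You also correctly isolate the crux of the round-trip argument — that simplicity of $G$ prevents adjacent blocks from merging when the partition is recovered — which is exactly the point the paper's surjectivity proof hinges on.
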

	\begin{proof}The proof is by constructing a bijection between the set of homomorphisms $F \to G \cdot H$ and the set of triples $(\mathcal{R}, g, h)$ where $\mathcal{R} \in \Pi(V(F))$ is such that all $F[R]$, $R \in \mathcal{R}$, are connected, and $g \colon F/\mathcal{R} \to G$ and $h \colon \coprod_{R \in \mathcal{R}} F[R] \to H$ are homomorphisms.
		To that end, write $\pi_G \colon V(G \cdot H) \to V(G)$ and $\pi_H \colon V(G \cdot H) \to V(H)$ for the projection maps. 
		
		Let $f \colon F \to G \cdot H$ be a homomorphism. Define a partition $\mathcal{R}'$ of $V(F)$ with classes $(\pi_G \circ f)^{-1}(v)$ for $v \in V(G)$ and $\mathcal{R} \leq \mathcal{R}'$ as the coarsest partition whose classes $R \in \mathcal{R}$ all induce connected subgraphs $F[R]$, i.e.\@ for every $R' \in \mathcal{R}'$, the partition $\mathcal{R}$ contains one class for every connected component of $F[R']$.
		
		The homomorphism $g \colon F/\mathcal{R} \to G$ is given by the map sending $R \in \mathcal{R}$ to $(\pi_G \circ f)(v)$ for any $v \in R$. By definition of $\mathcal{R}$, this map is well-defined.
		It is indeed a homomorphism since if $R_1, R_2 \in \mathcal{R}$ are adjacent in $F/\mathcal{R}$,  there exist $x_1 \in R_1$ and $x_2 \in R_2$ such that $x_1x_2 \in E(F)$. 
		Observe that $R_1 \neq R_2$ because $F/\mathcal{R}$ is simple and hence $x_1$ and $x_2$ lie in different classes of $\mathcal{R}'$.
		As $(\pi_G \circ f)(x_1) \neq (\pi_G \circ f)(x_2)$, it holds that $(\pi_G \circ f)(x_1)$ and $(\pi_G \circ f)(x_2)$ are adjacent vertices in $G$ because $f$ is a homomorphism. Hence, $g(R_1)$ and $g(R_2)$ are adjacent in $G$.
		
		The homomorphism $h \colon \coprod_{R \in \mathcal{R}} F[R] \to H$ is given by $\pi_H \circ f$.
		This is indeed a homomorphism since if $x_1, x_2 \in V(F)$ are adjacent in $\coprod_{R \in \mathcal{R}} F[R]$ then they lie in the same class of $\mathcal{R}'$ and hence $h(x_1)h(x_2)$ is an edge of $H$.
		
		For injectivity, suppose that $f, f' \colon F \to G \cdot H$ are both mapped to $(\mathcal{R}, g, h)$. Then, $\pi_H \circ f = h = \pi_H \circ f'$ and furthermore for every $v \in V(G)$ with $v \in R$ for some $R \in \mathcal{R}$, $(\pi_G \circ f) (v) = g(R) = g'(R) = (\pi_G \circ f)(v)$. Hence, $f = f'$. 
		
		For surjectivity, let $(\mathcal{R}, g, h)$ be a triple where $\mathcal{R} \in \Pi(V(F))$ is such that all $F[R]$, $R \in \mathcal{R}$, are connected, and $g \colon F/\mathcal{R} \to G$ and $h \colon \coprod_{R \in \mathcal{R}} F[R] \to H$ are homomorphisms.
		Define $f \colon F \to G \cdot H$ by $v \mapsto ((g \circ \rho) (v), h(v))$ where $\rho \colon F \to F/\mathcal{R}$ is the map sending $v \in V(F)$ to $R \in \mathcal{R}$ such that $v \in R$.
		
		The map $f$ is a homomorphism. Indeed, for $uv \in E(F)$, distinguish cases: If $(g \circ \rho)(u) = (g \circ \rho)(v)$ then $\rho(u) = \rho(v)$ since otherwise $\rho(u)$ and $\rho(v)$ are adjacent in $F/\mathcal{R}$ and $(g \circ \rho)(u) \neq (g \circ \rho)(v)$ as $g$ is a homomorphism into a loop-less graph.
		In this case, $u, v \in R$ for some $R \in \mathcal{R}$ and thus $h(u)h(v)$ is an edge of $H$.
		If $(g \circ \rho)(u) \neq (g \circ \rho)(v)$ then in particular $\rho(u) \neq \rho(v)$, $\rho(u)\rho(v)$ is an edge of $F/\mathcal{R}$ and hence $(g \circ \rho)(u)$ and $(g \circ \rho)(v)$ are adjacent in $G$.
		In any case, $f(u)f(v)$ is an edge of $G \cdot H$.
		
		Write $(\mathcal{R}', g', h')$ for the image of $f$ under the aforementioned construction. It is claimed that $(\mathcal{R}', g', h') = (\mathcal{R}, g, h)$.
		
		To argue that $\mathcal{R} = \mathcal{R}'$, let $x_1 \dots x_\ell$ be a path in $F$ such that $(\pi_G \circ f)(x_1) = \dots = (\pi_G \circ f)(x_\ell)$.
		Then $(g \circ \rho)(x_1) = \dots = (g \circ \rho)(x_\ell)$ by construction. It has to be shown that $\rho(x_1) = \dots = \rho(x_\ell)$. If $\rho(x_i) \neq \rho(x_{i+1})$ for some $1 \leq i < \ell$ then $\rho(x_i)$ and $\rho(x_{i+1})$ are adjacent in $F/\mathcal{R}$, which cannot be since $G$ is loop-less and both of these vertices have the same image under $g$. This implies that if $x, y $ are in the same class of $\mathcal{R}'$ then they are in the same class of $\mathcal{R}$.
		Conversely, let $x_1 \dots x_\ell$ be a path in $F$ such that all $x_1,\dots, x_\ell \in R$ for some $R \in \mathcal{R}$.
		Then $\rho(x_1) = \dots = \rho(x_\ell)$ and hence $(g \circ \rho)(x_1) = \dots = (g \circ \rho)(x_\ell)$. In particular, if $x, y $ are in the same class of $\mathcal{R}$ then they are in the same class under $\mathcal{R}'$.
		
		To argue that $g = g'$, note that for any $R \in \mathcal{R} = \mathcal{R}'$ with $v \in R$, $g'(R) = (\pi_G \circ f)(v) = (g \circ \rho)(v) = g(R)$.
		Finally, $h' = \pi_H \circ f = h$.
	\end{proof}
	
	\cref{thm:lexprod-hom} yields \cref{prop:lexprod-indsub,prop:lexprod-contract} as follows: 

	\begin{proof}[Proof of \cref{prop:lexprod-indsub}]
		That \ref{indsub1} implies \ref{indsub2} is immediate from \cref{thm:lexprod-hom}.
		
		Let $F \in \cl(\mathcal{F})$ and $U \subseteq V(F)$. In order to show that $F[U] \in \cl(\mathcal{F})$ assuming \ref{indsub2}, let $G = K_n$ where $n \coloneqq |V(F)|$.
		Observe that $\hom(F/\mathcal{P}, G) > 0$ for every $\mathcal{P} \in \Pi(V(F))$. For the discrete partition $\mathcal{D}$, $\coprod_{R \in \mathcal{D}} F[R] \cong nK_1$. By \cref{lem:lincomb}, $K_1 \in \cl(\mathcal{F})$. 
		Write $\mathcal{P}$ for the partition whose classes are formed by the connected components of $F[U]$ and singleton classes otherwise. Then $\coprod_{P \in \mathcal{P}} F[P] \cong F[U] + m K_1$ where $m \coloneqq |V(F) \setminus U|$. By \cref{lem:lincomb}, $F[U] + mK_1 \in \cl(\mathcal{F})$ which implies, as for example argued in \cref{lem:minors}, that $F[U] \in \cl(\mathcal{F})$.
		Hence, \ref{indsub2} implies \ref{indsub3}.
		
		The implication \ref{indsub3} $\Rightarrow$ \ref{indsub2} follows from \ref{indsub1} $\Rightarrow$ \ref{indsub2} for $\cl(\mathcal{F})$ since $\equiv_{\mathcal{F}}$ and $\equiv_{\cl(\mathcal{F})}$ coincide.
	\end{proof}
	
	\begin{proof}[Proof of \cref{prop:lexprod-contract}]
		That \ref{contr1} implies \ref{contr2} is immediate from \cref{thm:lexprod-hom}.
		
		Let $F \in \cl(\mathcal{F})$. It is to show that $F/\mathcal{P} \in \cl(\mathcal{F})$ for every $\mathcal{P} \in \Pi(V(F))$ such that all $F[P]$ are connected for $P \in \mathcal{P}$.
		To that end, write $n \coloneqq |V(F)|$ and let $H = K_n$.
		Observe that $\hom(F[U], H) > 0$ for every $U \subseteq V(F)$.
		Hence, the coefficient of $\hom(F/\mathcal{P}, -)$ in the linear combination from \cref{thm:lexprod-hom} is non-zero. By \cref{lem:lincomb}, $F/\mathcal{P} \in \cl(\mathcal{F})$.
		Hence, \ref{contr2} implies \ref{contr3}.
		
		The implication \ref{contr3} $\Rightarrow$ \ref{contr2} follows from \ref{contr1} $\Rightarrow$ \ref{contr2} for $\cl(\mathcal{F})$ since $\equiv_{\mathcal{F}}$ and $\equiv_{\cl(\mathcal{F})}$ coincide.
	\end{proof}
	
	\Cref{prop:lexprod-contract,prop:lexprod-indsub} can be combined to yield the following \cref{cor:lexprod}:
	
	\begin{corollary} \label{cor:lexprod}
		For a graph class $\mathcal{F}$ and the assertions
		\begin{enumerate}
			\item $\mathcal{F}$ is closed under taking induced subgraphs and edge contractions,\label{ccontr1}
			\item $\equiv_{\mathcal{F}}$ is such that for all simple  graphs $G$, $G'$, $H$, and $H'$, if $G \equiv_{\mathcal{F}} G'$ and $H \equiv_{\mathcal{F}} H'$ then $G \cdot H \equiv_{\mathcal{F}} G' \cdot H'$,\label{ccontr2}
			\item $\cl(\mathcal{F})$ is closed under taking induced subgraphs and edge contractions.\label{ccontr3}
		\end{enumerate}
		the implications \ref{ccontr1} $\Rightarrow$ \ref{ccontr2} $\Leftrightarrow$ \ref{ccontr3}  hold.
	\end{corollary}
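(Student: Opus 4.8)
The plan is to obtain \cref{cor:lexprod} simply by combining \cref{prop:lexprod-indsub,prop:lexprod-contract}, exploiting that the two-sided preservation property in \ref{ccontr2} decomposes into the one-sided properties studied there. The only ingredient beyond those two theorems is the transitivity of $\equiv_{\mathcal{F}}$ (immediate from its definition) and the fact, used repeatedly in the preceding proofs, that $\equiv_{\mathcal{F}}$ and $\equiv_{\cl(\mathcal{F})}$ coincide on simple graphs. So I expect no genuine obstacle; the work is purely organisational.

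For \ref{ccontr1} $\Rightarrow$ \ref{ccontr2}, I would argue as follows. Assuming $\mathcal{F}$ is closed under taking induced subgraphs and edge contractions, \cref{prop:lexprod-indsub} (implication \ref{indsub1} $\Rightarrow$ \ref{indsub2}) gives that $\equiv_{\mathcal{F}}$ is preserved under left lexicographic products, and \cref{prop:lexprod-contract} (implication \ref{contr1} $\Rightarrow$ \ref{contr2}) gives that it is preserved under right lexicographic products. Now, given simple graphs with $G \equiv_{\mathcal{F}} G'$ and $H \equiv_{\mathcal{F}} H'$, the right-product preservation yields $G \cdot H \equiv_{\mathcal{F}} G' \cdot H$, the left-product preservation yields $G' \cdot H \equiv_{\mathcal{F}} G' \cdot H'$, and transitivity of $\equiv_{\mathcal{F}}$ gives $G \cdot H \equiv_{\mathcal{F}} G' \cdot H'$, which is \ref{ccontr2}.

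For \ref{ccontr2} $\Rightarrow$ \ref{ccontr3}, I would observe that \ref{ccontr2} specialises, by taking $G = G'$, to assertion \ref{indsub2} of \cref{prop:lexprod-indsub}, and, by taking $H = H'$, to assertion \ref{contr2} of \cref{prop:lexprod-contract}. Applying \ref{indsub2} $\Rightarrow$ \ref{indsub3} of \cref{prop:lexprod-indsub} shows $\cl(\mathcal{F})$ is closed under taking induced subgraphs, and applying \ref{contr2} $\Rightarrow$ \ref{contr3} of \cref{prop:lexprod-contract} shows it is closed under edge contractions; together this is \ref{ccontr3}.

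Finally, \ref{ccontr3} $\Rightarrow$ \ref{ccontr2} follows from \ref{ccontr1} $\Rightarrow$ \ref{ccontr2} applied to the graph class $\cl(\mathcal{F})$, using that $\cl(\cl(\mathcal{F})) = \cl(\mathcal{F})$ and that $\equiv_{\mathcal{F}}$ and $\equiv_{\cl(\mathcal{F})}$ agree on simple graphs, so that the preservation property transfers back to $\equiv_{\mathcal{F}}$. If anything requires care it is merely making sure the quantifier pattern of \ref{ccontr2} matches exactly the specialisations needed to invoke the two constituent theorems, and that the coincidence of $\equiv_{\mathcal{F}}$ and $\equiv_{\cl(\mathcal{F})}$ is only needed (and only available) on simple graphs, which is all that \ref{ccontr2} concerns.
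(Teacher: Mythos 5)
Your proposal is correct and matches the paper's proof essentially verbatim: the same two-step application of \cref{prop:lexprod-contract} and \cref{prop:lexprod-indsub} together with transitivity for \ref{ccontr1}~$\Rightarrow$~\ref{ccontr2}, the same specialisations ($G=G'$ and $H=H'$) for \ref{ccontr2}~$\Rightarrow$~\ref{ccontr3}, and the same appeal to $\cl(\cl(\mathcal{F}))=\cl(\mathcal{F})$ for \ref{ccontr3}~$\Rightarrow$~\ref{ccontr2}.
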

	\begin{proof}Assuming \ref{ccontr1}, if $G \equiv_{\mathcal{F}} G'$ and $H \equiv_{\mathcal{F}} H'$ then $G \cdot H \equiv_{\mathcal{F}} G' \cdot H$ by \cref{prop:lexprod-contract}
		and $G' \cdot H \equiv_{\mathcal{F}} G' \cdot H'$ by \cref{prop:lexprod-indsub}.
		By transitivity, $G \cdot H \equiv_{\mathcal{F}} G' \cdot H'$ and \ref{ccontr2} holds. The implication \ref{ccontr2} $\Rightarrow$ \ref{ccontr3} is immediate from \cref{prop:lexprod-contract,prop:lexprod-indsub}.
		The implication \ref{ccontr3} $\Rightarrow$ \ref{ccontr2} follows from \ref{ccontr1} $\Rightarrow$ \ref{ccontr2} for $\cl(\mathcal{F})$ since $\equiv_{\mathcal{F}}$ and $\equiv_{\cl(\mathcal{F})}$ coincide.  
	\end{proof}

	As a final observation, the following \cref{lem:contr} relates the property of being closed under edge contractions to the other closure properties in \cref{tab:overview,fig:relationship}.

	\begin{lemma} \label{lem:contr}
		If a homomorphism distinguishing closed graph class $\mathcal{F}$ is closed under contracting edges then it is closed under taking summands.
	\end{lemma}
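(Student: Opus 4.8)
The plan is to show directly that $\cl(\mathcal{F}) = \mathcal{F}$ is closed under taking summands, using only that $\mathcal{F}$ is closed under contracting edges and homomorphism distinguishing closed; in contrast to the preceding results, \cref{lem:lincomb} will not be needed. Let $F = F_1 + F_2 \in \mathcal{F}$; by symmetry it suffices to prove $F_1 \in \mathcal{F}$, and we may assume that $F_1$ and $F_2$ each have at least one vertex, the remaining cases being trivial. Write $c$ and $c'$ for the numbers of connected components of $F_2$ and of $F_1$ respectively, so that $c, c' \geq 1$.

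First I would obtain $K_1 \in \mathcal{F}$. Collapsing each connected component of $F$ to a single vertex is a legitimate edge contraction --- the partition of $V(F)$ into the vertex sets of its components has connected classes, and the resulting quotient has no edges since distinct components are non-adjacent --- so it turns $F$ into $(c+c')K_1$, whence $(c+c')K_1 \in \mathcal{F}$. Since $\hom((c+c')K_1, G) = \hom(K_1, G)^{c+c'}$ for every simple graph $G$ by \cref{eq:coproduct}, and since $t \mapsto t^{c+c'}$ is injective on the non-negative integers, $G \equiv_{\mathcal{F}} H$ forces $\hom(K_1, G) = \hom(K_1, H)$. Thus $K_1 \in \cl(\mathcal{F}) = \mathcal{F}$.

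Next I would contract only the connected components of $F_2$, leaving $F_1$ intact; this is again a legitimate edge contraction, now turning $F$ into $F_1 + cK_1$, so $F_1 + cK_1 \in \mathcal{F}$ and hence $\equiv_{\mathcal{F}}$ determines $\hom(F_1 + cK_1, -) = \hom(F_1, -)\cdot \hom(K_1, -)^{c}$. Let $G$ and $H$ be simple graphs with $G \equiv_{\mathcal{F}} H$. The previous paragraph gives $\hom(K_1, G) = \hom(K_1, H) =: n$, and therefore $\hom(F_1, G)\,n^{c} = \hom(F_1, H)\,n^{c}$. If $n > 0$ we cancel $n^{c}$; if $n = 0$ then $G$ and $H$ have no vertices and $\hom(F_1, G) = 0 = \hom(F_1, H)$ because $F_1$ is non-empty. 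In either case $\hom(F_1, G) = \hom(F_1, H)$, so $F_1 \in \cl(\mathcal{F}) = \mathcal{F}$, and symmetrically $F_2 \in \mathcal{F}$.

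I do not anticipate a genuine obstacle; the only point requiring care is reading \enquote{closed under contracting edges} as closure under contracting a single edge at a time, so that iterating it allows one to collapse entire connected components to isolated vertices --- exactly what both steps above rely on. Note that the computation actually shows, without invoking that $\mathcal{F}$ is homomorphism distinguishing closed, that $\cl(\mathcal{F})$ is closed under taking summands whenever $\mathcal{F}$ is closed under contracting edges; the closedness hypothesis enters only to identify $\cl(\mathcal{F})$ with $\mathcal{F}$.
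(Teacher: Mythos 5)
Your proof is correct and follows essentially the same strategy as the paper's: contract connected components of a graph in $\mathcal{F}$ to isolated vertices, deduce $K_1 \in \mathcal{F}$ from $mK_1 \in \mathcal{F}$, and then cancel the resulting power of $\hom(K_1,-)$. The only (minor) difference is that the paper first reduces to the case of a single connected component $C$ — invoking that $\cl(\mathcal{F})$ is closed under disjoint unions to reassemble an arbitrary summand — whereas you contract precisely the components lying outside the target summand $F_1$, obtaining $F_1 + cK_1 \in \mathcal{F}$ directly; this keeps the argument slightly more self-contained. Your explicit handling of the $\hom(K_1,\cdot)=0$ case is a small piece of care that the paper's terse proof of \cref{lem:minors} glosses over.
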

	\begin{proof}Let $F \in \mathcal{F}$. Since every homomorphism distinguishing closed graph class is closed under disjoint unions, cf.\@ \cref{eq:coproduct} and \cite{roberson_oddomorphisms_2022}, it suffices to show that every connected component $C$ of $F$ is in $\mathcal{F}$. Let~$m$ denote the number of connected components of~$F$. 
		By contracting all edges, the graph $mK_1$ can be obtained from~$F$.
		Hence, as argued in \cref{lem:minors}, $K_1 \in \mathcal{F}$. 
		Moreover, the graph $C + (m-1)K_1$ can be obtained from~$F$ by contracting all edges not in~$C$. This implies as in \cref{lem:minors} that $C \in \mathcal{F}$.
	\end{proof}

	\subsection{Applications}
	\label{sec:applications}
	
	As applications of \cref{thm:complement,thm:full-complement,prop:lexprod-indsub,prop:lexprod-contract}, we conclude, in the spirit of \cite{atserias_expressive_2021}, that certain equivalence relations on graphs cannot be homomorphism distinguishing relations.

	\begin{corollary} \label{cor:complement}
		Let $\mathcal{F}$ be a non-empty graph class such that one of the following holds:
		\begin{enumerate}
			\item $\equiv_{\mathcal{F}}$ is preserved under complements, cf.\@ \cref{thm:complement},
			\item $\equiv_{\mathcal{F}}$ is preserved under full complements, cf.\@ \cref{thm:full-complement},
			\item $\equiv_{\mathcal{F}}$ is preserved under left lexicographic products, cf.\@ \cref{prop:lexprod-indsub}, or
			\item $\equiv_{\mathcal{F}}$ is preserved under right lexicographic products, cf.\@ \cref{prop:lexprod-contract}.
		\end{enumerate}
		Then  $G \equiv_{\mathcal{F}} H$ implies that $|V(G)| = |V(H)|$ for all graphs $G$ and $H$.
	\end{corollary}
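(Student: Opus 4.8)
The plan is to treat the four cases uniformly. In each case, the respective cited theorem of this section upgrades the stated preservation property of $\equiv_{\mathcal{F}}$ to a closure property of the homomorphism distinguishing closure: by \cref{thm:complement} (implication \ref{rcl2}$\Rightarrow$\ref{rcl4}), \cref{thm:full-complement} (\ref{cl2}$\Rightarrow$\ref{cl4}), \cref{prop:lexprod-indsub} (\ref{indsub2}$\Rightarrow$\ref{indsub3}), and \cref{prop:lexprod-contract} (\ref{contr2}$\Rightarrow$\ref{contr3}), the class $\cl(\mathcal{F})$ is closed under taking minors, under taking subgraphs, under taking induced subgraphs, or under contracting edges, respectively. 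I would then exploit that each of these operations, applied to a non-empty graph, produces an edgeless graph.

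Concretely, since $\mathcal{F}$ is non-empty, fix a graph $F \in \mathcal{F} \subseteq \cl(\mathcal{F})$ with $V(F) \neq \emptyset$. In the first three cases, a single-vertex subgraph of $F$ is simultaneously a subgraph, an induced subgraph, and a minor of $F$, so $K_1 \in \cl(\mathcal{F})$. In the fourth case, contracting each connected component of $F$ to a single vertex is a valid edge contraction, so $nK_1 \in \cl(\mathcal{F})$, where $n \geq 1$ is the number of connected components of $F$. Hence in every case $\cl(\mathcal{F})$ contains $nK_1$ for some $n \geq 1$.

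To finish, let $G$ and $H$ be graphs with $G \equiv_{\mathcal{F}} H$. Applying \cref{eq:product} with the one-vertex graph $K_1$ gives $G \times K_1 \equiv_{\mathcal{F}} H \times K_1$, and since $K_1$ carries no loop, $G \times K_1 \cong |V(G)| K_1$ and $H \times K_1 \cong |V(H)| K_1$ are simple (indeed edgeless) graphs. As $nK_1 \in \cl(\mathcal{F})$, the definition of the homomorphism distinguishing closure now yields $\hom(nK_1, |V(G)| K_1) = \hom(nK_1, |V(H)| K_1)$, that is $|V(G)|^n = |V(H)|^n$, so $|V(G)| = |V(H)|$ because $n \geq 1$ and both quantities are non-negative integers.

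Once the four theorems of this section are available, the argument is mere bookkeeping, so I do not anticipate a genuine obstacle; the only points needing care are that the edge-contraction case (unlike the other three) may fail to put $K_1$ itself into $\cl(\mathcal{F})$ and only guarantees some $nK_1$ with $n \geq 1$, and that arbitrary $G$ and $H$ must first be replaced by the simple graphs $G \times K_1$ and $H \times K_1$ before the closure $\cl(\mathcal{F})$---which is defined via simple graphs only---can be invoked.
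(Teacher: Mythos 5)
Your proof is correct and follows the same strategy as the paper: pass to $\cl(\mathcal{F})$ via the cited theorems, use non-emptiness together with the closure property to place some edgeless graph $nK_1$ into $\cl(\mathcal{F})$, and conclude $|V(G)| = |V(H)|$ from $|V(G)|^n = |V(H)|^n$. You are in fact slightly more careful than the paper's terse proof, which asserts $K_1 \in \cl(\mathcal{F})$ in all four cases without comment; for the edge-contraction case this requires exactly the extra step you flag, since a disconnected $F \in \cl(\mathcal{F})$ contracts only to $nK_1$ for $n \geq 1$, not to $K_1$ itself, and your handling of the simple-graph convention for $\cl(\mathcal{F})$ is likewise a legitimate (if, under the paper's standing conventions, dispensable) precaution.
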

	\begin{proof}
		By \cref{thm:complement,thm:full-complement,prop:lexprod-indsub,prop:lexprod-contract}, $\mathcal{F}$ can be chosen to be closed under taking minors, subgraphs, induced subgraphs, or contracting edges.
		In any case, $K_1 \in \mathcal{F}$ as $\mathcal{F}$ is non-empty and hence $|V(G)| = \hom(K_1, G) = \hom(K_1, H) = |V(H)|$.
	\end{proof}

	As concrete examples, consider the following relations. 
	
	\begin{corollary} \label{cor:no-hom}
		There is no graph class $\mathcal{F}$ satisfying any of the following assertions for all graphs $G$ and $H$:
		\begin{enumerate}
			\item $G \equiv_{\mathcal{F}} H$ iff $a(G) = a(H)$ where $a$ denotes the order of the automorphism group,\label{cor:app1}
			\item $G \equiv_{\mathcal{F}} H$ iff $\alpha(G) = \alpha(H)$ where $\alpha$ denotes the size of the largest independent set,\label{cor:app2}
			\item $G \equiv_{\mathcal{F}} H$ iff $\omega(G) = \omega(H)$ where $\omega$ denotes the size of the largest clique,\label{cor:app3}
			\item $G \equiv_{\mathcal{F}} H$ iff $\chi(G) = \chi(H)$ where $\chi$ denotes the chromatic number.\label{cor:app4}
		\end{enumerate}
	\end{corollary}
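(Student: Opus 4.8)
The plan is to obtain all four non-existence statements as consequences of \cref{cor:complement}. For each of the parameters $p\in\{a,\alpha,\omega,\chi\}$ one reasons as follows: if some graph class $\mathcal{F}$ satisfied $G\equiv_{\mathcal{F}}H$ if and only if $p(G)=p(H)$, then $\equiv_{\mathcal{F}}$ would be preserved under one of the four operations occurring in \cref{cor:complement}, and hence, by that corollary, $p(G)=p(H)$ would imply $|V(G)|=|V(H)|$; exhibiting two graphs of different orders and equal $p$-value then produces the contradiction. Observe that such an $\mathcal{F}$ cannot be empty, since $p$ is non-constant while $\equiv_{\emptyset}$ identifies all graphs, so \cref{cor:complement} indeed applies; moreover the counterexample graphs may be taken to be simple.

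For \cref{cor:app1}, the relevant observation is that a permutation of $V(G)$ preserves $E(G)$ precisely when it preserves $\binom{V(G)}{2}\setminus E(G)=E(\overline{G})$, so that $\operatorname{Aut}(G)=\operatorname{Aut}(\overline{G})$ and in particular $a(G)=a(\overline{G})$ for every simple graph $G$. Thus $\equiv_{\mathcal{F}}$ would be preserved under complements, and \cref{cor:complement} would force $|V(G)|=|V(H)|$ whenever $a(G)=a(H)$. This is contradicted by $a(K_2)=2=a(K_2+K_1)$ together with $|V(K_2)|=2\neq 3=|V(K_2+K_1)|$.

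For \cref{cor:app2,cor:app3}, I would appeal to the well-known identities $\alpha(G\cdot H)=\alpha(G)\,\alpha(H)$ and $\omega(G\cdot H)=\omega(G)\,\omega(H)$: an independent set (resp.\ a clique) $S$ of $G\cdot H$ projects onto an independent set (resp.\ a clique) of $G$, the set $\{h\mid(g,h)\in S\}$ is an independent set (resp.\ a clique) of $H$ for every $g\in V(G)$, and products of optimal solutions in the two factors realise these bounds. It follows that $\equiv_{\mathcal{F}}$ would be preserved under right lexicographic products (for $\alpha$) or under left lexicographic products (for $\omega$), so by \cref{cor:complement} equality of $\alpha$-values or of $\omega$-values would force equal order. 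This contradicts $\alpha(K_1)=1=\alpha(K_2)$ and $\omega(K_1)=1=\omega(\overline{K_2})$.

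The remaining case \cref{cor:app4} is the one I expect to demand real work; its crux is the identity $\chi(G\cdot H)=\chi(G\cdot K_{\chi(H)})$ for all simple graphs $G$ and $H$. The inequality $\chi(G\cdot H)\le\chi(G\cdot K_{\chi(H)})$ is immediate, since a proper $\chi(H)$-colouring of $H$ is a homomorphism $H\to K_{\chi(H)}$ and hence induces a homomorphism $G\cdot H\to G\cdot K_{\chi(H)}$. For the converse, given a proper colouring $\gamma$ of $G\cdot H$ with $c$ colours, let $C_g$ denote the set of colours that $\gamma$ uses on the fibre $\{g\}\times V(H)$, for $g\in V(G)$; since this fibre induces a copy of $H$ one has $|C_g|\ge\chi(H)$, and since adjacent fibres are completely joined in $G\cdot H$ one has $C_g\cap C_{g'}=\emptyset$ whenever $gg'\in E(G)$. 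Choosing $C'_g\subseteq C_g$ with $|C'_g|=\chi(H)$ and colouring the clique $\{g\}\times V(K_{\chi(H)})$ bijectively by $C'_g$ produces a colouring of $G\cdot K_{\chi(H)}$ that is proper, since distinct colours are used within each fibre and $C'_g\cap C'_{g'}\subseteq C_g\cap C_{g'}=\emptyset$ across adjacent fibres, and which uses at most $c$ colours. Granting this identity, $\chi(H)=\chi(H')$ entails $\chi(G\cdot H)=\chi(G\cdot H')$ for all $G$, so $\equiv_{\mathcal{F}}$ would be preserved under left lexicographic products, whence by \cref{cor:complement} equal chromatic number would force equal order --- contradicting $\chi(\overline{K_2})=1=\chi(\overline{K_3})$.
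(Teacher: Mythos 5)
Your proof is correct and follows essentially the same route as the paper: reduce each case to \cref{cor:complement} by exhibiting a preservation property, then produce two graphs of different order with the same parameter value. The only difference is that where the paper cites Geller--Stahl for the preservation of $\alpha$ and $\chi$ under left lexicographic products (and deduces $\omega$ from $\alpha$ via $\overline{G}\cdot\overline{H}=\overline{G\cdot H}$), you prove the underlying identities $\alpha(G\cdot H)=\alpha(G)\alpha(H)$, $\omega(G\cdot H)=\omega(G)\omega(H)$, and $\chi(G\cdot H)=\chi(G\cdot K_{\chi(H)})$ directly, and you also make the counterexamples explicit; all these arguments are sound.
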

	\begin{proof}
		The relation in \cref{cor:app1} is preserved under taking complements. By \cite[Theorem~1, Corollary p.\@~90]{geller_chromatic_1975}, the relations in \cref{cor:app2,cor:app4} are preserved under left lexicographic products. For \cref{cor:app3}, the same follows from \cite[Theorem~1]{geller_chromatic_1975} observing that $\overline{G} \cdot \overline{H} = \overline{G \cdot H}$ and $\omega(G) = \alpha(\overline{G})$.
		In each case, it is easy to exhibit a pair of graphs $G$ and $H$ in the same equivalence class with different number of vertices. By \cref{cor:complement}, none of the equivalence relations is a homomorphism indistinguishability relation.
	\end{proof}
	
	\section{Equivalences over Self-Complementary Logics}
	\label{sec:logic}
	
	In this section, \cref{thm:main1} is derived from \cref{thm:complement}.
	The theorem applies to self-complementary logics, of which examples are given subsequently.
	Finally, a result from graph minor theory is used to relate logics on graphs to quantum isomorphism.
	
	For convenience, we recall the following definitions from \cref{sec:intro}. A \emph{logic on graphs} \cite{barwise_extended_2017} is a  pair $(\mathsf{L}, \models)$ of a class $\mathsf{L}$ and a relation $\models$ comparing graphs and elements of $\mathsf{L}$ which is isomorphism-invariant, i.e.\@ satisfying that for all $\phi \in \mathsf{L}$ and graphs $G$ and $H$ such that $G \cong H$ it holds that $G \models \phi$ iff $H \models \phi$.
	When convenient, we omit the reference to $\models$ and denote $(\mathsf{L}, \models)$ by $\mathsf{L}$.
	Two graphs are \emph{$\mathsf{L}$-equivalent} if for all $\phi \in \mathsf{L}$ it holds that $G \models \phi$ iff $H \models \phi$.
	One may think of a logic on graphs as a mere collection of isomorphism-invariant graph properties. Every $\phi \in \mathsf{L}$ defines such property.
	This very general definition has to be strengthened only slightly in order to yield \cref{thm:main2}:
	
	\begin{definition} \label{def:self-complementary}
		A logic on graphs $(\mathsf{L}, \models)$ is \emph{self-complementary} if for all 		
		$\phi \in \mathsf{L}$ there is a $\overline{\phi} \in \mathsf{L}$ such that
		for all simple graphs $G$ it holds that $G  \models \phi$ if and only if $ \overline{G} \models \overline{\phi}$.
	\end{definition}
	
	\thmLogic*

	\begin{proof}It is shown that $\equiv_{\mathcal{F}}$ is preserved under taking complements in the sense of \cref{thm:complement}.
		Suppose $G \equiv_{\mathcal{F}} H$. By assumption, for all $\phi \in \mathsf{L}$ it holds that $G \models \phi$ iff $H \models \phi$ and hence, by self-complementarity,
		\[
		\overline{G} \models \phi 
		\iff
		G \models \overline{\phi}
		\iff
		H \models \overline{\phi}
		\iff
		\overline{H} \models \overline{\overline{\phi}}
		\iff
		\overline{H} \models \phi.
		\]
		Here, the penultimate equivalence holds since $H \models \phi$ if and only if $H \models \overline{\overline{\phi}}$ for all~$\phi$ and $H$ by the definition of self-complementarity observing that $\overline{\overline{H}} \cong H$.
		Thus, $\overline{G} \equiv_{\mathcal{F}} \overline{H}$.
		By \cref{thm:complement}, $\mathcal{F}' \coloneqq \cl(\mathcal{F})$ is minor-closed.
	\end{proof}

	In particular, by \cref{cor:complement}, all $\mathsf{L}$-equivalent graphs $G$ and $H$ must have the same number of vertices unless $\mathsf{L}$ is trivial in the sense that all graphs $G$ and $H$ are $\mathsf{L}$-equivalent.

	\subsection{Examples for Self-Complementary Logics}
	A first example of a self-complementarity logic is first-order logic $\mathsf{FO}$ over the signature of graphs $\{E\}$.
	In order to establish this property, 
	a formula $\overline{\phi} \in \mathsf{FO}$ has to be constructed for every $\phi \in \mathsf{FO}$ such that $G \models \phi$ iff $\overline{G} \models \overline{\phi}$ for all graphs $G$. Only subformulae $Exy$ require non-trivial treatment:
	
	\begin{definition} \label{def:fo-complement}
		For every $\phi \in \mathsf{FO}$, define $\overline{\phi} \in \mathsf{FO}$ inductively as follows:
			\begin{enumerate}
				\item if $\phi = Exy$ then $\overline{\phi} \coloneqq \neg Exy \land (x \neq y)$,
				\item if $\phi = \bot$ or $\phi = \top$ then $\overline{\phi} \coloneqq \phi$,
				 if $\phi = (x = y)$ then $\overline{\phi} \coloneqq \phi$,
				 if $\phi = \neg \psi$ then $\overline{\phi} \coloneqq \neg \overline{\psi}$,
				 if $\phi = \psi \land \chi$ then $\overline{\phi} \coloneqq \overline{\psi} \land \overline{\chi}$,
				 if $\phi = \psi \lor \chi$ then $\overline{\phi} \coloneqq \overline{\psi} \lor \overline{\chi}$,
				 if $\phi = \exists x.\ \psi$ then $\overline{\phi} \coloneqq \exists x.\ \overline{\psi}$, and
				 if $\phi = \forall x.\ \psi$ then $\overline{\phi} \coloneqq \forall x.\ \overline{\psi}$.
			\end{enumerate}
	\end{definition}
	
	\begin{lemma} \label{lem:semantics-complemtary-formula}
		Let $\phi \in \mathsf{FO}$ be a formula with $k \geq 0$ free variables.
		Then for all simple graphs~$G$ with $\boldsymbol{v} \in V(G)^k$ it holds that
		\[
		G, \boldsymbol{v} \models \phi \iff 
		\overline{G}, \boldsymbol{v} \models \overline{\phi}.
		\]
		In particular, $\mathsf{FO}$ is self-complementary.
	\end{lemma}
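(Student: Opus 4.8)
The plan is to prove the displayed equivalence by structural induction on $\phi$, following exactly the clauses of \cref{def:fo-complement}. Fix a simple graph $G$ throughout. The first thing to record is that the complement operation does not change the vertex set, $V(\overline{G}) = V(G)$, so any tuple $\boldsymbol{v} \in V(G)^k$ is simultaneously an assignment of the free variables of $\phi$ in $G$ and in $\overline{G}$; thus the statement makes sense and the induction has a uniform parameter space. I expect the entire content of the proof to be concentrated in the atomic case $\phi = Exy$; everything else is bookkeeping.

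For the base cases: if $\phi \in \{\bot, \top\}$ or $\phi = (x=y)$, then $\overline{\phi} = \phi$ and the truth value of $\phi$ under $\boldsymbol{v}$ does not depend on the edge set, only on $V(G) = V(\overline{G})$, so the equivalence is immediate. The substantive case is $\phi = Exy$. Writing $v_x, v_y$ for the vertices assigned to $x,y$, the point is that $G, \boldsymbol{v} \models Exy$ means $v_x v_y \in E(G)$, and since $G$ is simple we have $E(G) \subseteq \binom{V(G)}{2}$ and $E(\overline{G}) = \binom{V(G)}{2} \setminus E(G)$; hence $v_x v_y \in E(G)$ holds if and only if $v_x \neq v_y$ and $v_x v_y \notin E(\overline{G})$, which is precisely $\overline{G}, \boldsymbol{v} \models \neg Exy \land (x \neq y) = \overline{\phi}$. (This is the one place where the loop-freeness hypothesis on $G$ is genuinely used: without it, $Exx$ could be satisfied in $G$, and the claimed equivalence would break.)

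For the inductive step, each remaining clause of \cref{def:fo-complement} is handled by observing that the bar operation commutes with the corresponding connective or quantifier, and then invoking the induction hypothesis. Concretely: for $\phi = \neg\psi$ one negates both sides of the hypothesis for $\psi$ and uses $\overline{\phi} = \neg\overline{\psi}$; for $\phi = \psi \land \chi$ and $\phi = \psi \lor \chi$ one applies the hypothesis to each of $\psi, \chi$; for $\phi = \exists x.\,\psi$ one quantifies over $w \in V(G) = V(\overline{G})$ and applies the hypothesis to $\psi$ with the extended assignment $(\boldsymbol{v}, w)$, and $\phi = \forall x.\,\psi$ is dual. None of these steps presents any difficulty.

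Finally, the ``in particular'' claim follows by taking $k = 0$: for every sentence $\phi \in \mathsf{FO}$ and every simple graph $G$ we obtain $G \models \phi \iff \overline{G} \models \overline{\phi}$, which is exactly the condition in \cref{def:self-complementary}, so $\mathsf{FO}$ is self-complementary. The only point requiring care in the whole argument is the atomic case together with the observation that complementation fixes the vertex set, so I would flag that as the ``main obstacle'', though it is a mild one.
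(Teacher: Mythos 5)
Your proof is correct and follows the same route as the paper: structural induction on $\phi$, with all the content concentrated in the atomic case $Exy$ (using that for simple $G$, $v_xv_y \in E(G)$ iff $v_x \neq v_y$ and $v_xv_y \notin E(\overline{G})$) and the remaining cases being purely syntactic. Your version is just more explicit about the bookkeeping than the paper's terse two-line argument.
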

	\begin{proof}
		The proof is by induction on the structure of $\phi$.
		If $\phi$ is $Exy$, observe that $v_1v_2 \in E(G)$ if and only if $v_1v_2 \not\in E(\overline{G})$ and $v_1 \neq v_2$.
		In all other cases, the claim is purely syntactical. 
	\end{proof}
	
	\cref{lem:semantics-complemtary-formula} gives a purely syntactical criterion for a fragment $\mathsf{L} \subseteq \mathsf{FO}$ to be self-complementary. Indeed, if $\overline{\phi} \in \mathsf{L}$ as defined in \cref{def:fo-complement} for all $\phi \in \mathsf{L}$ then $\mathsf{L}$ is self-complementary. 
	Note that the operation in \cref{def:fo-complement} increases neither the number of variables nor affects the quantifiers in the formula.
	Thus, \cref{lem:semantics-complemtary-formula} automatically extends to fragments of $\mathsf{FO}$ defined by restricting the number of variables, order or number of quantifiers.
	For extensions of $\mathsf{FO}$, \cref{def:fo-complement} can be easily extended.
	This yields a rich realm of self-complementary logics, of which the following \cref{ex:logics}  lists only a selection.
	
	\begin{example} \label{ex:logics} \label{ex:fo-fragment} \label{ex:counting}
		The following logics on graphs are self-complementary. For every $k, d \geq 0$,
		\begin{enumerate}
			\item the $k$-variable and quantifier-depth-$d$ fragments $\mathsf{FO}^k$ and $\mathsf{FO}_d$ of $\mathsf{FO}$,
			\item first-order logic with counting quantifiers $\mathsf{C}$ and its $k$-variable and quantifier-depth-$d$ fragments $\mathsf{C}^k$ and~$\mathsf{C}_d$,
			\item inflationary fixed-point logic $\mathsf{IFP}$, cf.\@ \cite{grohe_descriptive_2017},
			\item second-order logic $\mathsf{SO}$ and its fragments monadic second-order logic $\mathsf{MSO}_1$, existential second-order logic $\mathsf{ESO}$, cf.\@ \cite{courcelle_monadic_1994,blass_model_2010}.
		\end{enumerate}
	\end{example}
		\cref{cor:complement} readily gives an alternative proof of \cite[Propositions~1 and~2]{atserias_expressive_2021}, which assert that neither $\mathsf{FO}^k$-equivalence nor $\mathsf{FO}_d$-equivalence are characterised by homomorphism indistinguishability relations.
	The logic fragments $\mathsf{C}^k$ and $\mathsf{C}_d$ are however characterised by homomorphism indistinguishability relations \cite{dvorak_recognizing_2010,grohe_counting_2020}.

	\subsection{Applications: Graph Minor Theory and Expressive Power}

	The final result of this section demonstrates how graph minor theory can yield insights into the expressive power of logics via \cref{thm:main1}.
	Subject to it are self-complementary logics which have a homomorphism indistinguishability characterisation and are stronger than $\mathsf{C}^k$ for every $k$, e.g.\@ they are capable of distinguishing CFI-graphs \cite{cai_optimal_1992}.
	It is shown that equivalence w.r.t.\@ any such logic is a sufficient condition for quantum isomorphism, an undecidable equivalence comparing graphs \cite{atserias_quantum_2019}.
		
	\begin{theorem} \label{cor:quantum}
		Let $(\mathsf{L}, \models)$ be a self-complementary logic on graphs for which there exists a graph class
		$\mathcal{F}$ such that two graphs $G$ and $H$ are homomorphism indistinguishable over $\mathcal{F}$ if and only if they are $\mathsf{L}$-equivalent.
		Suppose that for all $k \in \mathbb{N}$ there exist graphs $G$ and $H$ such that $G \equiv_{\mathsf{C}^k} H$ and $G \not\equiv_{\mathsf{L}} H$.
		Then all $\mathsf{L}$-equivalent graphs are quantum isomorphic.
	\end{theorem}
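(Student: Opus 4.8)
The plan is to prove that the minor-closed graph class $\mathcal{F}'$ provided by \cref{thm:main1}, whose homomorphism indistinguishability relation coincides with $\mathsf{L}$-equivalence, contains every simple planar graph. Since two graphs are quantum isomorphic if and only if they are homomorphism indistinguishable over the class of all planar graphs \cite{mancinska_quantum_2019}, this suffices: if $G \equiv_{\mathsf{L}} H$ then $G \equiv_{\mathcal{F}'} H$ by \cref{thm:main1}, and since every planar graph lies in $\mathcal{F}'$ we obtain $\hom(P, G) = \hom(P, H)$ for all planar $P$, i.e.\@ $G$ and $H$ are quantum isomorphic.

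To prove that $\mathcal{F}'$ contains all planar graphs I would first show that $\mathcal{F}'$ has \emph{unbounded} treewidth. Suppose towards a contradiction that there is a $k$ with $\tw(F) < k$ for every $F \in \mathcal{F}'$. Then $\mathcal{F}'$ is contained in the class $\mathcal{T}_k$ of graphs of treewidth less than $k$, so $G \equiv_{\mathcal{T}_k} H$ implies $G \equiv_{\mathcal{F}'} H$ for all graphs $G$ and $H$. Combining this with the characterisation of $\mathsf{C}^k$-equivalence as homomorphism indistinguishability over $\mathcal{T}_k$ \cite{dvorak_recognizing_2010} and with \cref{thm:main1}, we conclude that $G \equiv_{\mathsf{C}^k} H$ implies $G \equiv_{\mathsf{L}} H$, contradicting the hypothesis. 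Hence $\mathcal{F}'$ contains graphs of arbitrarily large treewidth.

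Next I would invoke the Excluded Grid Theorem of Robertson and Seymour: there is a function $g \colon \mathbb{N} \to \mathbb{N}$ such that every graph of treewidth at least $g(t)$ contains the $t \times t$ grid as a minor. Since $\mathcal{F}'$ has unbounded treewidth and is minor-closed, it therefore contains the $t \times t$ grid for every $t$. As every simple planar graph is a minor of a sufficiently large grid, minor-closedness of $\mathcal{F}'$ forces every planar graph into $\mathcal{F}'$, as required.

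Almost all of the content is carried by imported results: \cref{thm:main1}, the characterisation of $\mathsf{C}^k$ via treewidth, the Excluded Grid Theorem, and the planar-graph characterisation of quantum isomorphism. The one point needing care is getting the direction of the refinement between homomorphism indistinguishability relations right and, in particular, recognising that the hypothesis comparing $\mathsf{L}$ with $\mathsf{C}^k$ for all $k$ is exactly the assertion that the minor-closed class $\mathcal{F}'$ cannot have bounded treewidth; I do not expect any real obstacle beyond assembling these ingredients correctly.
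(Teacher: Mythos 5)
Your proof is correct and is essentially the same argument as the paper's, just organised in the direct rather than contrapositive direction. Both you and the author reduce the problem to the dichotomy for minor-closed classes: such a class is either of bounded treewidth or contains every planar graph. You derive this from the Excluded Grid Theorem plus the fact that every planar graph is a minor of a large grid, whereas the paper cites the dichotomy directly from Robertson--Seymour (2.1); these are equivalent formulations of the same result. The remaining ingredients — \cref{thm:main1} (equivalently \cref{thm:complement}) to obtain a minor-closed $\mathcal{F}'$, Dvořák's treewidth characterisation of $\mathsf{C}^k$-equivalence, and the Mančinska--Roberson planar-graph characterisation of quantum isomorphism — are used identically in both. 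The only point worth double-checking in your write-up is the off-by-one between \enquote{treewidth less than $k$} and \enquote{treewidth at most $k$}; since the hypothesis quantifies over all $k$ this is harmless, but it is worth being consistent with whichever convention you adopt.
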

	\begin{proof}
		Contrapositively, it is shown that if there exist non-quantum-isomorphic $\mathsf{L}$-equivalent graphs then there exists a $k \in \mathbb{N}$
		such that $G \equiv_{\mathsf{C}^k} H \implies G \equiv_{\mathsf{L}} H$ for all $G$ and $H$.
		By~\cite{mancinska_quantum_2019,dvorak_recognizing_2010}, this statement can be rephrased in the language of homomorphism indistinguishability as $\mathcal{P} \not\subseteq \cl(\mathcal{F}) \implies \exists k \in \mathbb{N}.\ \mathcal{F} \subseteq \cl(\mathcal{TW}_k)$ where $\mathcal{P}$ denotes the class of all planar graphs and $\mathcal{TW}_k$ the class of all graphs of treewidth at most $k$.
		By \cref{thm:complement}, $\cl(\mathcal{F})$ is a minor-closed graph class. 
		By~\cite[(2.1)]{robertson_graph_1986}, cf.\@ \cite[Theorem~3.8]{nesetril_sparsity_2012}, if $\cl(\mathcal{F})$ does not contain all planar graphs then it is of bounded treewidth. Hence, there exists a $k \in \mathbb{N}$ such that $\mathcal{F} \subseteq \cl(\mathcal{F}) \subseteq \mathcal{TW}_k \subseteq \cl(\mathcal{TW}_k)$.
	\end{proof}

	\section{Classification of Homomorphism Distinguishing Closed Essentially Profinite Graph Classes}
	\label{sec:profinite}
	 
	The central result of this section is a classification of the homomorphism distinguishing closed graph classes which are in a sense finite.
	Since every homomorphism distinguishing closed graph class is closed under disjoint unions, 
	infinite graph classes arise naturally when studying the semantic properties of the homomorphism indistinguishability relations of finite graph classes. Nevertheless, the infinite graph classes arising in this way are \emph{essentially finite}, i.e.\@ they exhibit only finitely many distinct connected components.
	One may generalise this definition slightly by observing that all graphs $F$ admitting a homomorphism into some fixed graph $G$ have chromatic number bounded by the chromatic number of $G$.
	Thus, in order to make a graph class $\mathcal{F}$ behave much like an essentially finite class, it suffices to impose a finiteness condition, for every graph $K$, on the subfamily of all $K$-colourable graphs in $\mathcal{F}$.
	
	Formally, for a graph $F$, write $\Gamma(F)$ for the set of connected components of $F$. For a graph class~$\mathcal{F}$, define $\Gamma(\mathcal{F})$ as the union of the $\Gamma(F)$ where $F \in \mathcal{F}$.
	For a graph class $\mathcal{F}$ and a graph $K$, define $\mathcal{F}_K \coloneqq \{F \in \mathcal{F} \mid \hom(F, K) > 0 \}$, the set of $K$-colourable graphs in $\mathcal{F}$.
	
	\begin{definition} \label{def:profinite}
		A graph class $\mathcal{F}$ is \emph{essentially finite} if $\Gamma(\mathcal{F})$ is finite.
		It is \emph{essentially profinite} if $\mathcal{F}_K$ is essentially finite for every graph $K$.
	\end{definition}

	Clearly, every finite graph class is essentially finite and hence essentially profinite.
	Other examples for essentially profinite classes are the class of all cliques.
	They represent a special case of the following construction from \cite[Theorem~6.16]{roberson_oddomorphisms_2022}:
	For every $S \subseteq \mathbb{N}$, the family
	\begin{equation} \label{ex:cliques}
	\mathcal{K}^S \coloneqq \left\{K_{n_1} + \dots + K_{n_r} \mid r \in \mathbb{N}, \{n_1, \dots, n_r\} \subseteq S \right\}
	\end{equation}
	is essentially profinite.
	In particular, there are uncountably many such families of graphs.
	Note that one may replace the sequence of cliques $(K_n)_{n\in \mathbb{N}}$ in \cref{ex:cliques} by any other sequence of connected graphs $(F_n)_{n \in \mathbb{N}}$ such that the sequence of chromatic numbers $(\chi(F_n))_{n \in \mathbb{N}}$ takes every value only finitely often.
	
	Every graph $F$ of an essentially finite family $\mathcal{F}$ can be represented uniquely as vector $\vec{F} \in \mathbb{R}^{\Gamma(\mathcal{F})}$  whose $C$-th entry for $C \in \Gamma(\mathcal{F})$ is the number of times the graph $C$ appears as a connected component in $F$.
	The classification of the homomorphism distinguishing closed essentially profinite graph classes can now be stated as follows.

	\begin{theorem} \label{thm:profinite}
		For an essentially profinite graph class $\mathcal{F}$, the following are equivalent:
		\begin{enumerate}
			\item $\mathcal{F}$ is homomorphism distinguishing closed,\label{pf1}
			\item For every graph $K$,
			if $\vec{K} \in \spn \{\vec{F} \in \mathbb{R}^{\Gamma(\mathcal{F}_{K} \cup \{K\})} \mid F \in \mathcal{F}_{K}\}$ then $K \in \mathcal{F}$,
			\label{pf2}
			\item $\mathcal{F}_K$ is homomorphism distinguishing closed for every graph $K$.\label{pf3}
		\end{enumerate}
	\end{theorem}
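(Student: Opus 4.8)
The plan is to derive all three equivalences from a single characterisation of membership in the homomorphism distinguishing closure. Throughout one may assume $K$ is simple: if $K$ carries a loop then every simple graph maps into it, so $\mathcal{F}_K=\mathcal{F}$ and the $K$-instance of \ref{pf3} reduces to \ref{pf1}, while $\vec{K}$ has a nonzero entry indexed by a loop-carrying component, which no $\vec{F}$ with $F\in\mathcal{F}_K$ has, so the premise of the $K$-instance of \ref{pf2} fails. With this reduction, everything follows from the following key lemma, which I would isolate first.

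\emph{Key Lemma.} Let $\mathcal{G}$ be a graph class with $\mathcal{G}_K$ essentially finite and let $K$ be simple. Then $K\in\cl(\mathcal{G})$ if and only if $\vec{K}\in\spn\{\vec{F}\in\mathbb{R}^{\Gamma(\mathcal{G}_K\cup\{K\})}\mid F\in\mathcal{G}_K\}$. Granting this, \ref{pf1} $\Leftrightarrow$ \ref{pf2} is obtained by applying the Key Lemma to $\mathcal{G}\coloneqq\mathcal{F}$: since $\mathcal{F}\subseteq\cl(\mathcal{F})$ always, \ref{pf1} says exactly that $K\in\cl(\mathcal{F})$ implies $K\in\mathcal{F}$ for every simple $K$, which by the Key Lemma is verbatim \ref{pf2}. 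For \ref{pf2} $\Leftrightarrow$ \ref{pf3} I would apply the Key Lemma to $\mathcal{G}\coloneqq\mathcal{F}_K$ (essentially finite, as $\mathcal{F}$ is essentially profinite), noting $(\mathcal{F}_K)_{K'}=\mathcal{F}_K\cap\mathcal{F}_{K'}$; then \ref{pf3} unwinds to: for all simple $K,K'$, if $\vec{K'}\in\spn\{\vec{F}\mid F\in\mathcal{F}_K\cap\mathcal{F}_{K'}\}$ then $K'\in\mathcal{F}$ and $\hom(K',K)>0$. Taking $K'=K$ gives \ref{pf3} $\Rightarrow$ \ref{pf2}; conversely, if $\vec{K'}$ lies in that span then it lies in $\spn\{\vec{F}\mid F\in\mathcal{F}_{K'}\}$, so $K'\in\mathcal{F}$ by \ref{pf2}, and every component $C$ of $K'$ occurs as a component of some $F\in\mathcal{F}_K\cap\mathcal{F}_{K'}$, whence $\hom(C,K)>0$ and therefore $\hom(K',K)>0$ by \eqref{eq:coproduct}.

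For the Key Lemma, the forward implication is elementary. Given simple $G\equiv_{\mathcal{G}}H$, I would first replace them by $G\times K$ and $H\times K$: by \eqref{eq:product} these are still $\equiv_{\mathcal{G}}$-equivalent and $K$-colourable, and since $\hom(K,K)>0$ it suffices to treat them, so assume $G,H$ are $K$-colourable. Then $\hom(F,G)=\hom(F,H)=0$ for $F\in\mathcal{G}\setminus\mathcal{G}_K$, so in fact $G\equiv_{\mathcal{G}_K}H$. The decisive observation is that every connected component of every $F\in\mathcal{G}_K$ maps into $K$; hence if $\hom(K,G)>0$ then $\hom(C,G)>0$ for all $C\in\Gamma(\mathcal{G}_K\cup\{K\})$. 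Writing $\vec{K}=\sum_F\lambda_F\vec{F}$ over a finite subset of $\mathcal{G}_K$, this shows $\hom(K,G)=0$ iff $\hom(K,H)=0$ (a zero on one side would, through $G\equiv_{\mathcal{G}_K}H$ and the linear relation, force a zero on the other), and that when both are positive all relevant counts are positive, so passing to logarithms turns $\vec{K}=\sum_F\lambda_F\vec{F}$ into $\hom(K,G)=\prod_F\hom(F,G)^{\lambda_F}=\prod_F\hom(F,H)^{\lambda_F}=\hom(K,H)$.

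The reverse implication — producing, from $\vec{K}\notin V\coloneqq\spn\{\vec{F}\mid F\in\mathcal{G}_K\}$, simple graphs $G\equiv_{\mathcal{G}}H$ with $\hom(K,G)\neq\hom(K,H)$ — is the main obstacle, and it is where essential finiteness is genuinely used. Since $\mathbb{R}^{\Gamma(\mathcal{G}_K\cup\{K\})}$ is then finite-dimensional and $V$ is a rational subspace avoiding $\vec{K}$, one can pick an integer vector $\psi\perp V$ with $\langle\psi,\vec{K}\rangle\neq0$; it then suffices to build $K$-colourable simple graphs $G,H$ with all homomorphism counts from $\Gamma(\mathcal{G}_K\cup\{K\})$ positive and $\hom(C,G)=\rho^{\psi_C}\hom(C,H)$ for all $C$ and some $\rho>1$, for then $\hom(F,G)=\hom(F,H)$ for all $F\in\mathcal{G}_K$ (as $\langle\psi,\vec{F}\rangle=0$), hence for all $F\in\mathcal{G}$ by $K$-colourability, while $\hom(K,G)\neq\hom(K,H)$. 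Realising such a pair is the heart of the matter: the construction rests on the blow-up identity $\hom(C,G\cdot\overline{K_t})=t^{\lvert V(C)\rvert}\hom(C,G)$ together with the multiplicative independence of homomorphism counts from pairwise non-isomorphic connected graphs, extending the argument used for disjoint unions of cliques (cf.\ \eqref{ex:cliques} and \cite{roberson_oddomorphisms_2022}). I expect the delicate point to be carrying out this realisation while keeping the constructed graphs $K$-colourable and all relevant counts strictly positive.
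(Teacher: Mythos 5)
Your high-level architecture is sound and in fact matches the paper's route: your Key Lemma is precisely the conjunction of the paper's \cref{lem:reduction} ($K\in\cl(\mathcal{G})\Leftrightarrow K\in\cl(\mathcal{G}_K)$) and \cref{thm:kwiecien22} applied to the essentially finite class $\mathcal{G}_K$, and the derivations of \ref{pf1}$\Leftrightarrow$\ref{pf2} and \ref{pf2}$\Leftrightarrow$\ref{pf3} from it are correct and slightly more streamlined than the paper's, which routes \ref{pf1}$\Leftrightarrow$\ref{pf3} through the auxiliary identity $\cl(\mathcal{F}_K)=\cl(\mathcal{F})_K$ of \cref{lem:pf-col}. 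Your reduction to simple $K$ and the treatment of the forward (``span $\Rightarrow$ closure'') direction, including the zero case, are essentially the paper's arguments.

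The genuine gap is in your sketch of the reverse direction of the Key Lemma, i.e.\ the hard direction of \cref{thm:kwiecien22}. You correctly reduce to producing $K$-colourable simple graphs $G,H$ with all relevant counts positive and $\hom(C,G)=\rho^{\psi_C}\hom(C,H)$ for an integer vector $\psi$ orthogonal to the span but not to $\vec{K}$. But the blow-up identity you propose to realise this, $\hom(C,G\cdot\overline{K_t})=t^{|V(C)|}\hom(C,G)$, only scales each count by $t^{|V(C)|}$: the exponent is forced to be $|V(C)|$, not the free integer $\psi_C$ you need (which may even be negative). There is no way to combine blow-ups to simulate an arbitrary $\psi$, and ``multiplicative independence of homomorphism counts'' is not a lemma one can lean on for this — what is needed is to show that the prescribed vector of counts $(\rho^{\psi_C}\hom(C,H))_C$ actually lies in the image of the map $a\mapsto(\sum_{G\in\mathcal{G}}\hom(C,G)\,a_G)_C$ restricted to $\mathbb{N}^{\mathcal{G}}$ for a suitable basis family $\mathcal{G}$. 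The paper achieves this with a topological perturbation argument: the linear map $\Psi$ given by an invertible homomorphism matrix is a homeomorphism, so $\Psi(\mathbb{R}^{\mathcal{G}}_{>0})$ is open; one starts from a rational interior point $p$, moves continuously along $t\mapsto(t^{z_C}p_C)_C$ for rational $t$ near $1$, stays inside the open image, and clears denominators. This openness-plus-rationality argument is what makes the construction possible, and it is absent from your sketch; as written, the proof of the Key Lemma does not close.
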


	\cref{thm:profinite} directly implies \cref{thm:main3}. Indeed, if $\mathcal{F}$ is union-closed and closed under summands then $\Gamma(\mathcal{F}) \subseteq \mathcal{F}$ and every graph $K$ such that $\Gamma(K) \subseteq \Gamma(\mathcal{F})$ is itself in $\mathcal{F}$.
	In particular, \cref{thm:profinite} implies that all essentially profinite union-closed minor-closed graph classes are homomorphism distinguishing closed.
	For example, for every graph $G$, the union-closure of the class of minors of $G$ is homomorphism distinguishing closed, cf.\@ \cite[Question~4]{roberson_oddomorphisms_2022}.

	Towards proving \cref{thm:profinite}, we first make the following general observation:
	Considering essentially profinite graph classes is very natural in light of the following \cref{lem:reduction}. For every graph $K$, the subset $\mathcal{F}_K$ of $\mathcal{F}$ is the object prescribing whether $K \in \cl(\mathcal{F})$.
	
	\begin{lemma} \label{lem:reduction}
		Let $\mathcal{F}$ be a graph class and $K$ be a graph. 
		Then $K \in \cl(\mathcal{F})$ if and only if $K \in \cl(\mathcal{F}_K)$.
	\end{lemma}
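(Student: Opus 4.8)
The plan is to prove the two implications separately, with the backward one essentially immediate and the forward one hinging on the categorical product with $K$.

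For the \enquote{if} direction, note that $\mathcal{F}_K \subseteq \mathcal{F}$ holds by definition, so the monotonicity of the closure operator recorded just after the definition of $\cl$ gives $\cl(\mathcal{F}_K) \subseteq \cl(\mathcal{F})$. Hence $K \in \cl(\mathcal{F}_K)$ implies $K \in \cl(\mathcal{F})$.

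For the \enquote{only if} direction, I would assume $K \in \cl(\mathcal{F})$ and take simple graphs $G$ and $H$ with $G \equiv_{\mathcal{F}_K} H$, aiming for $\hom(K, G) = \hom(K, H)$. The key step is to pass to the graphs $G \times K$ and $H \times K$, which are again simple because $G$, $H$, and $K$ are loopless. Using \cref{eq:product}, for each $F \in \mathcal{F}$ one has $\hom(F, G \times K) = \hom(F, G)\hom(F, K)$ and likewise for $H$. I would then split into cases: if $\hom(F, K) > 0$ then $F \in \mathcal{F}_K$, so $\hom(F, G) = \hom(F, H)$ and the two products agree; if $\hom(F, K) = 0$ then both products vanish. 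Either way $\hom(F, G \times K) = \hom(F, H \times K)$, i.e.\@ $G \times K \equiv_{\mathcal{F}} H \times K$. Now invoking $K \in \cl(\mathcal{F})$ yields $\hom(K, G \times K) = \hom(K, H \times K)$, which by \cref{eq:product} reads $\hom(K, G)\hom(K, K) = \hom(K, H)\hom(K, K)$. Since the identity map witnesses $\hom(K, K) \geq 1$, I can cancel this factor and conclude $\hom(K, G) = \hom(K, H)$. As $G$ and $H$ were arbitrary simple graphs with $G \equiv_{\mathcal{F}_K} H$, this shows $K \in \cl(\mathcal{F}_K)$.

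I do not expect a genuine obstacle here; the only points needing a moment's care are that $G \times K$ and $H \times K$ remain simple (so that they are admissible in the definition of $\cl$), and that $\hom(K, K) \neq 0$, which legitimises the cancellation in the final step. The heart of the argument is the auxiliary construction $- \times K$, which upgrades $\equiv_{\mathcal{F}_K}$ on $G, H$ to $\equiv_{\mathcal{F}}$ on $G \times K, H \times K$ precisely because tensoring with $K$ kills homomorphism counts from the graphs in $\mathcal{F} \setminus \mathcal{F}_K$.
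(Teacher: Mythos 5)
Your proof is correct and follows essentially the same route as the paper's: the backward direction via monotonicity of $\cl$, and the forward direction by tensoring with $K$ to upgrade $\equiv_{\mathcal{F}_K}$ on $G, H$ to $\equiv_{\mathcal{F}}$ on $G \times K, H \times K$, then cancelling $\hom(K,K) > 0$. The extra remark that $G \times K$ and $H \times K$ remain simple is a sensible precaution that the paper leaves implicit.
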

	\begin{proof}The backward implication is immediate since $\mathcal{F}_K \subseteq \mathcal{F}$ and thus $\cl(\mathcal{F}_K) \subseteq \cl(\mathcal{F})$.
		Conversely, suppose that $K \in \cl(\mathcal{F})$. Let $G$ and $H$ be arbitrary graphs such that $G \equiv_{\mathcal{F}_K} H$.
		Then, by \cref{eq:product}, $G \times K \equiv_{\mathcal{F}} H \times K$ since $\hom(F, K) = 0$ for all $F \in \mathcal{F} \setminus \mathcal{F}_K$.
		By assumption, $\hom(K,G \times K) = \hom(K, H \times K)$, and therefore $\hom(K, G) = \hom(K, H)$ since $\hom(K, K) > 0$. Thus, $K \in \cl(\mathcal{F}_K)$.
	\end{proof}
	
	The proof of \cref{thm:profinite} is based on a generalisation of a result by Kwiecie{\'n}, Marcinkowski, and Ostropolski-Nalewaja~\cite{kwiecien_determinacy_2022}.
	They proved the following \cref{thm:kwiecien22} for finite graph classes.
	The extension to essentially finite graph classes does not require much additional work but might make core ideas appear more transparently.

	\begin{theorem}[Kwiecie{\'n}--Marcinkowski--Ostropolski-Nalewaja \cite{kwiecien_determinacy_2022}] \label{thm:kwiecien22}
		Let $\mathcal{F}$ be an essentially finite family of graphs and $K$ be a graph. 
		Suppose that $\hom(F, K) > 0$ for all $F \in \mathcal{F}$.
		Let $\mathcal{C} \coloneqq \Gamma(\mathcal{F} \cup \{K\})$.
		Then $K \in \cl(\mathcal{F})$ if and only if $\vec{K} \in \spn \{\vec{F} \in \mathbb{R}^{\mathcal{C}} \mid F \in \mathcal{F}\}$.
	\end{theorem}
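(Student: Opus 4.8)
The plan is to reduce the statement to a finite-dimensional linear algebra argument, essentially transporting the finite-case proof of \cite{kwiecien_determinacy_2022} through the finite index set $\mathcal{C} = \Gamma(\mathcal{F} \cup \{K\})$. The key structural fact is that, by \cref{eq:coproduct}, for every $F \in \mathcal{F} \cup \{K\}$ and every graph $X$ we have $\hom(F, X) = \prod_{C \in \mathcal{C}} \hom(C, X)^{\vec{F}_C}$, so $\log \hom(F, X) = \sum_{C \in \mathcal{C}} \vec{F}_C \log \hom(C, X)$ whenever all the relevant homomorphism counts are positive. Thus $\vec{F} \mapsto \hom(F, -)$ translates linear dependence among the vectors $\vec{F}$ into multiplicative dependence among the homomorphism-count functions.

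For the \emph{backward} direction, suppose $\vec{K} = \sum_{i} \lambda_i \vec{F_i}$ with $F_i \in \mathcal{F}$ and $\lambda_i \in \mathbb{R}$. I would show $K \in \cl(\mathcal{F})$ directly: given simple graphs $G \equiv_{\mathcal{F}} H$, I first need $\hom(C, G) = \hom(C, H)$ for all $C \in \Gamma(\mathcal{F})$, which follows from $G \equiv_{\mathcal{F}} H$ together with the multiplicativity \cref{eq:coproduct} and the invertibility trick already used in \cref{lem:minors} (each connected component of a graph in $\mathcal{F}$ is, after padding with isolated vertices, detectable). Then, writing $\vec{K}$ in the basis-like spanning set, $\hom(K, G) = \prod_{C} \hom(C,G)^{\vec{K}_C} = \prod_C \hom(C,H)^{\vec K_C} = \hom(K, H)$ — with the caveat that some $\hom(C, G)$ may be $0$; here one uses $\hom(F, K) > 0$ for all $F \in \mathcal{F}$, which forces $\hom(C, K) > 0$ for all $C \in \mathcal{C}$, so that on graphs homomorphically mapping into $K$ the relevant counts stay positive, and for the general case one passes to $G \times K$ and $H \times K$ as in \cref{lem:reduction}.

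For the \emph{forward} (contrapositive) direction, suppose $\vec{K} \notin \spn\{\vec{F} : F \in \mathcal{F}\}$. By finite-dimensional duality over $\mathbb{R}^{\mathcal{C}}$, there is a functional $y \in \mathbb{R}^{\mathcal{C}}$ with $\langle y, \vec{F}\rangle = 0$ for all $F \in \mathcal{F}$ but $\langle y, \vec{K}\rangle \neq 0$. The task is then to manufacture simple graphs $G, H$ with $G \equiv_{\mathcal{F}} H$ but $\hom(K, G) \neq \hom(K, H)$, by realising the exponents $y_C$ as actual homomorphism counts. Concretely, I would build $G$ and $H$ so that $\hom(C, G)/\hom(C, H) = t^{y_C}$ for a single transcendental/generic parameter $t$ (or, after clearing denominators, $\hom(C,G) = a_C$, $\hom(C,H) = b_C$ with $\prod_C (a_C/b_C)^{\vec F_C} = 1$ for all $F\in\mathcal F$ but $\neq 1$ for $K$); a standard device is to take $G$ and $H$ to be large weighted blow-ups / disjoint unions of the components in $\mathcal{C}$ tuned against the vector $y$, invoking the classical fact that finitely many prescribed independent homomorphism-count ratios can be simultaneously realised. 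This realisability step is where \cref{thm:kwiecien22} genuinely lies; the essential-finiteness hypothesis enters precisely because it makes $\mathcal{C}$ finite, so only finitely many constraints must be met at once.

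The main obstacle I anticipate is this last realisation step: constructing, from an abstract kernel vector $y$, an honest pair of simple graphs whose component-wise homomorphism counts exhibit exactly the prescribed multiplicative discrepancy, while keeping $G \equiv_{\mathcal{F}} H$ exactly. I expect the paper handles this either by the weighted-graph/partition-function machinery of \cite{lovasz_large_2012} (work with edge-weighted or vertex-weighted graphs, which suffice since homomorphism counts extend, then round to genuine simple graphs via blow-ups) or by a direct combinatorial construction generalising the one in \cite{kwiecien_determinacy_2022}; in either case the finiteness of $\mathcal{C}$ is what keeps the construction finite and the argument from \cite{kwiecien_determinacy_2022} essentially intact.
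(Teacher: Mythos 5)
Your backward direction contains a genuine error. You claim to first deduce $\hom(C,G)=\hom(C,H)$ for every connected component $C\in\Gamma(\mathcal{F})$ from $G\equiv_{\mathcal{F}}H$ via multiplicativity and ``the invertibility trick.'' This does not follow: if $\mathcal{F}=\{F_1+F_2\}$ with $F_1,F_2$ connected, non-isomorphic and homomorphically equivalent, then $G\equiv_{\mathcal{F}}H$ means only $\hom(F_1,G)\hom(F_2,G)=\hom(F_1,H)\hom(F_2,H)$, which does not force $\hom(F_1,G)=\hom(F_1,H)$. The invertibility trick in \cref{lem:minors} relies on the padded graphs $C+mK_1$ themselves lying in the class, which is not assumed here. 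Note also that if your claim were available, the spanning hypothesis would be superfluous, since then $\hom(K,-)=\prod_C\hom(C,-)^{\vec K_C}$ would already be determined component-by-component. The correct route is to never isolate component counts: handle the case $\hom(F,G)=0$ for some $F\in\mathcal{F}$ separately (this forces $\hom(K,G)=0=\hom(K,H)$ precisely because $\hom(F,K)>0$), and otherwise all counts are positive so one may substitute $\vec K_C=\sum_i\alpha_i(\vec{F_i})_C$ into $\prod_C\hom(C,G)^{\vec K_C}$ and regroup to obtain $\hom(K,G)=\prod_i\hom(F_i,G)^{\alpha_i}=\prod_i\hom(F_i,H)^{\alpha_i}=\hom(K,H)$, using only $\hom(F_i,G)=\hom(F_i,H)$.

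Your forward direction is at the right level of abstraction and correctly isolates the realisation step as the crux, but you explicitly defer it, so there is no proof there yet. For the record, the paper carries it out as follows: pick a finite $\mathcal{G}$ with the matrix $M=(\hom(C,G))_{C\in\mathcal{C},G\in\mathcal{G}}$ invertible, consider $\Psi(a)=Ma$ on $\mathbb{R}^{\mathcal G}$, take an integer $z$ orthogonal to all $\vec F$ but not to $\vec K$, fix a rational point $p\in\Psi(\mathbb{Q}_{>0}^{\mathcal G})$, and use openness of $\Psi(\mathbb{R}_{>0}^{\mathcal G})$ to find a rational $t>1$ with $(t^{z_C}p_C)_C$ again in $\Psi(\mathbb{Q}_{>0}^{\mathcal G})$; after clearing denominators, the two preimages encode disjoint unions of graphs from $\mathcal{G}$ that agree on $\mathcal{F}$ but differ on $K$ by the factor $t^{\langle z,\vec K\rangle}\neq 1$. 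This is closer to a direct combinatorial construction than to the weighted-graph/blow-up route you conjectured.
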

	\begin{proof}
		For the backward direction, suppose that $\vec{K} \in \spn \{\vec{F} \in \mathbb{R}^{\mathcal{C}} \mid F \in \mathcal{F}\}$. 
		Observe that this implies that $\Gamma(\mathcal{F}) = \mathcal{C}$, i.e.\@ all connected components of $K$ appear as connected components of some $F \in \mathcal{F}$.
		Write $\vec{K} = \sum_{i=1}^r \alpha_i \vec{F_i}$ for some $\alpha_1, \dots, \alpha_r \in \mathbb{R}$ and $F_1, \dots, F_r \in \mathcal{F}$.
		Also write $K = \coprod_{C \in \mathcal{C}} \beta_C C$ for some $\beta_C \in \mathbb{N}$.
		Observe that $\beta_C = \vec{K}_C = \sum_{i=1}^r \alpha_i (\vec{F_i})_C$ for all $C \in \mathcal{C}$.
		
		Let $G$ and $H$ be graphs such that $G \equiv_{\mathcal{F}} H$. If $\hom(F, G) = 0 = \hom(F, H)$ for some $F \in \mathcal{F}$ then $\hom(K, G) = 0 = \hom(K, H)$ by the assumption that $\hom(F, K) > 0$ for all $F \in \mathcal{F}$. 
		Hence, it may be supposed that $\hom(F, G) = \hom(F, H) > 0$ for all $F \in \mathcal{F}$.
		This is crucial for ruling out division by zero in the following argument.
		Observe that
		\[
		\hom(F, -) = \prod_{C \in \mathcal{C} \text{ s.t. } \vec{F}_C \geq 1} \hom(C, -)^{\vec{F}_C}
		\]
		for all $F \in \mathcal{F} \cup \{K\}$. 
		It follows that $\hom(C, G) > 0$ and $\hom(C, H) > 0$ for all $C \in \Gamma(\mathcal{F}) = \mathcal{C}$. 
		Thus, in every step of the following calculation, the bases of all exponentials are positive integers.
		\begin{align*}
			\hom(K, G) 
			&= \prod_{C \in \mathcal{C}} \hom(C, G)^{\beta_C}
			= \prod_{C \in \mathcal{C}} \hom(C, G)^{\sum_{i=1}^r \alpha_i (\vec{F_i})_C}
			= \prod_{i=1}^r \prod_{C \in \mathcal{C}}  \hom(C, G)^{\alpha_i(\vec{F_i})_C} \\
			&= \prod_{i=1}^r \hom(F_i, G)^{\alpha_i}
			= \prod_{i=1}^r \hom(F_i, H)^{\alpha_i}
			= \hom(K, H).
		\end{align*}
		
		Conversely, pick a finite family of graphs $\mathcal{G}$ such that the matrix $M \coloneqq (\hom)|_{\mathcal{C} \times \mathcal{G}}$ is invertible, e.g.\@ in virtue of \cite[Proposition~5.44(b)]{lovasz_large_2012}. Consider the map
		\begin{align*}
			\Psi \colon \mathbb{R}^{\mathcal{G}} & \to \mathbb{R}^{\mathcal{C}} \\
			a &\mapsto \sum_{G \in \mathcal{G}} \hom(\mathcal{C}, G) a_G = M a.
		\end{align*}
		We think of $\mathbb{N}^{\mathcal{G}} \subseteq \mathbb{R}^{\mathcal{G}}$ as the space of instructions for constructing graphs as disjoint unions of elements in~$\mathcal{G}$. The vector $a \in \mathbb{N}^{\mathcal{G}}$ corresponds to the graph $\coprod_{G \in \mathcal{G}} a_G G$.
		The map $\Psi$ associates with such a graph its $\mathcal{C}$-homomorphism vector. In this way, $\mathbb{R}^{\mathcal{C}}$ may be thought of as the space of $\mathcal{C}$-homomorphism vectors.
		As a vector space isomorphism, $\Psi$ is a homeomorphism.
		
		Contrapositively, suppose that $\vec{K} \not\in \spn \{\vec{F} \in \mathbb{R}^{\mathcal{C}} \mid F \in \mathcal{F}\}$.
		Pick an integer vector $z \in \mathbb{Z}^{\mathcal{C}}$ such that $\langle z, \vec{F} \rangle = 0$ for all $F \in \mathcal{F}$ and $\langle z, \vec{K} \rangle \neq 0$.
		This can be done by Gram--Schmidt orthogonalisation applied to the rational vectors spanning $\spn \{\vec{F} \in \mathbb{R}^{\mathcal{C}} \mid F \in \mathcal{F}\}$ and to the rational vector $\vec{K}$.
		The resulting vector~$z'$ with rational entries is orthogonal to all $\vec{F}$, $F \in \mathcal{F}$ and has non-zero inner-product with $z$. The integer vector $z$ is then obtained from $z'$ by clearing denominators.
		
		Pick $p \in \Psi(\mathbb{Q}^\mathcal{G}_{>0}) \subseteq \mathbb{Q}^\mathcal{C}_{>0}$. 
		In what follows, the vector $p$ is perturbed in a direction depending on~$z$. Technical complications arise when proving that the perturbed vector can again be interpreted as an instruction for constructing a graph, i.e.\@ has a positive rational preimage under~$\Psi$.
		To that end, consider the continuous function $\phi \colon \mathbb{R}_{>0} \to \mathbb{R}_{>0}^\mathcal{C}$ which maps $t \mapsto (t^{z_C} p_C \mid C \in \mathcal{C})$. 
		\begin{claim} \label{thm:kwiecien22-claim1}
			There exists $t > 1$ such that $\phi(t) \in \Psi(\mathbb{Q}^\mathcal{G}_{>0})$.
		\end{claim}
		\begin{claimproof}
			As the image of an open set under a homeomorphism, the set $\Psi(\mathbb{R}^\mathcal{G}_{>0})$ is open in $\mathbb{R}^{\mathcal{C}}$.
			Hence, there exists $\epsilon > 0$ such that $B_\epsilon(p) \subseteq \Psi(\mathbb{R}^\mathcal{G}_{>0})$.
			Since $\phi^{-1}(B_\epsilon(p))$ is open, $\phi(1) = p$, and $\mathbb{Q}_{>0}$ is dense in $\mathbb{R}_{>0}$, there exists a rational $t > 1$ such that $\phi(t) \in B_\epsilon(p) \subseteq \Psi(\mathbb{R}^\mathcal{G}_{>0})$.
			Because $t$ and $p$ are rational and $z$ is integral, $\phi(t) \in \mathbb{Q}^{\mathcal{C}}$. The matrix $M$ from definition of $\Psi$ has integer entries and hence its preimages of rational vectors are rational. This implies that $\phi(t) \in \Psi(\mathbb{Q}_{> 0}^{\mathcal{G}})$.
		\end{claimproof}
		Write $p' \coloneqq \phi(t)$ for the $t$ whose existence is guaranteed by \cref{thm:kwiecien22-claim1}.
		Let $s, s' \in \mathbb{Q}_{> 0}^{\mathcal{G}}$ denote the preimages of $p$ and $p'$ under $\Psi$, respectively. There exists a natural number $\lambda \geq 1$ such that $\lambda s$ and $\lambda s'$ lie in $\mathbb{N}^{\mathcal{G}}$. Write $H$ and $H'$ for the structures obtained by interpreting $\lambda s$ and $\lambda s'$ as instructions for disjoint unions over elements in $\mathcal{G}$, i.e.\@ $H = \coprod_{G \in \mathcal{G}} (\lambda s)_G G$ and $H' = \coprod_{G \in \mathcal{G}} (\lambda s')_G G$. Then
		\[
		\hom(F, H) 
		= \prod_{C \in \mathcal{C}} \hom(C, H)^{\vec{F}_C}
		= \prod_{C \in \mathcal{C}} \Psi(\lambda s)_C^{\vec{F}_C}
		= \prod_{C \in \mathcal{C}} \left(\lambda \Psi(s)_C\right)^{\vec{F}_C}
		= \prod_{C \in \mathcal{C}} \left(\lambda p_C \right)^{\vec{F}_C}
		\]
		and, similarly,
		\begin{align*}
			\hom(F, H') 
			= \prod_{C \in \mathcal{C}} \Psi(\lambda s')_C^{\vec{F}_C}
			= \prod_{C \in \mathcal{C}} \left(\lambda \Psi(s')_C\right)^{\vec{F}_C}
			= \prod_{C \in \mathcal{C}} \left(\lambda p'_C \right)^{\vec{F}_C}
			= t^{\langle z, \vec{F} \rangle} \prod_{C \in \mathcal{C}} \left(\lambda p_C \right)^{\vec{F}_C}
		\end{align*}
		for all graphs $F$ which are disjoint unions of graphs in $\mathcal{C}$. In particular, $\hom(F, H) = \hom(F, H')$ for all $F \in \mathcal{F}$ but $\hom(K, H) \neq \hom(K, H')$.
		Thus $K \not\in \cl(\mathcal{F})$.
	\end{proof}

	Towards the proof of \cref{thm:profinite}, we collect the following lemmas:

	\begin{lemma}\label{lem:cl-col}
		For every graph class $\mathcal{F}$ and every graph $K$, 
		$\cl(\mathcal{F})_K \subseteq \cl(\mathcal{F}_K)$.
	\end{lemma}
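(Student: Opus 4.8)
The goal is to show $\cl(\mathcal{F})_K \subseteq \cl(\mathcal{F}_K)$, i.e.\ every $K$-colourable graph in $\cl(\mathcal{F})$ already lies in $\cl(\mathcal{F}_K)$. So fix some $L \in \cl(\mathcal{F})$ with $\hom(L, K) > 0$; I want $L \in \cl(\mathcal{F}_K)$. By definition this means: for all simple graphs $G, H$, if $G \equiv_{\mathcal{F}_K} H$ then $\hom(L, G) = \hom(L, H)$. The natural move is to mimic the proof of \cref{lem:reduction}: twist $G$ and $H$ by a categorical product with a graph that kills off the non-$K$-colourable members of $\mathcal{F}$ while leaving $\hom(L, -)$ recoverable.

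First I would take arbitrary simple $G, H$ with $G \equiv_{\mathcal{F}_K} H$. Set $G' \coloneqq G \times K$ and $H' \coloneqq H \times K$. Using \cref{eq:product}, for every $F \in \mathcal{F}$ we have $\hom(F, G') = \hom(F, G)\hom(F, K)$. If $F \notin \mathcal{F}_K$ then $\hom(F, K) = 0$, so $\hom(F, G') = 0 = \hom(F, H')$; and if $F \in \mathcal{F}_K$ then $\hom(F, G') = \hom(F, G)\hom(F,K) = \hom(F, H)\hom(F, K) = \hom(F, H')$ by the hypothesis $G \equiv_{\mathcal{F}_K} H$. Hence $G' \equiv_{\mathcal{F}} H'$. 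Since $L \in \cl(\mathcal{F})$, this gives $\hom(L, G') = \hom(L, H')$, i.e.\ $\hom(L, G)\hom(L, K) = \hom(L, H)\hom(L, K)$ by \cref{eq:product}. Because $\hom(L, K) > 0$ (this is exactly where $L \in \cl(\mathcal{F})_K$ is used), we may divide to conclude $\hom(L, G) = \hom(L, H)$. Therefore $L \in \cl(\mathcal{F}_K)$, as desired.

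**Main obstacle.** Honestly, there is no serious obstacle here — the argument is essentially a one-line adaptation of \cref{lem:reduction}, and the only point requiring any care is making sure the product trick interacts correctly with the distinguished graph: one must observe that $G \times K$ is again a simple graph when $G$ and $K$ are simple (so that it is an admissible test graph for membership in $\cl(\mathcal{F}_K)$), and that the factor $\hom(L,K)$ being strictly positive — which holds precisely because $L$ is $K$-colourable — is what licenses the final cancellation. One could equivalently phrase the whole thing as: $L \in \cl(\mathcal{F})$ implies $L \in \cl(\mathcal{F})_L \subseteq \cl(\mathcal{F}_L)$ by \cref{lem:reduction}, and then note $\mathcal{F}_L \subseteq \mathcal{F}_K$ whenever $\hom(L, K) > 0$ since $\hom(F, L) > 0$ and $\hom(L, K) > 0$ compose to $\hom(F, K) > 0$; monotonicity of $\cl$ then finishes it. I would present the direct product argument as it is self-contained, but flag this shorter route as a remark.
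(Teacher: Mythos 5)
Your proposal is correct, and you have in fact given both proofs that are worth giving. The \emph{alternate route} you flag as a remark --- passing through $\cl(\mathcal{F}_L)$ via \cref{lem:reduction}, noting $\mathcal{F}_L \subseteq \mathcal{F}_K$ by composing $\hom(F,L) > 0$ with $\hom(L,K) > 0$, and finishing with monotonicity of $\cl$ --- is exactly the paper's argument, the only cosmetic difference being that the paper packages the for-every-$L$ quantification into a chain of inclusions and invokes \cref{lem:intersection} to commute $\cl$ past the union. Your \emph{main} proof instead inlines the product trick from \cref{lem:reduction}: rather than using that lemma as a black box, you re-run its computation ($G \equiv_{\mathcal{F}_K} H \Rightarrow G \times K \equiv_{\mathcal{F}} H \times K$, then cancel $\hom(L,K) > 0$) directly for the target graph $L$. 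This is a slightly more self-contained and, one could argue, more transparent presentation, since it makes plain why $K$-colourability of $L$ is exactly the hypothesis needed for the cancellation; the paper's version buys modularity by reusing \cref{lem:reduction,lem:intersection} but loses this visibility. One small point worth making explicit in the direct version is that $G \times K$ and $H \times K$ remain simple when $G$ and $H$ are (a loop at $(g,k)$ would require a loop at $g$), so that $L \in \cl(\mathcal{F})$ is legitimately applicable; you touch on this but it deserves a sentence.
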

	\begin{proof}
		Consider the following chain of inclusions:
		\begin{align*}
			\cl(\mathcal{F})_K 
			= \bigcup_{L \in \cl(\mathcal{F})_K} \{L\}
			\subseteq \bigcup_{L \in \cl(\mathcal{F})_K} \cl(\mathcal{F}_L)
			\subseteq \cl\left( \bigcup_{L \in \cl(\mathcal{F})_K} \mathcal{F}_L \right)
			\subseteq \cl(\mathcal{F}_K).
		\end{align*}
		The first inclusion follows from \cref{lem:reduction}. Indeed, if $L \in \cl(\mathcal{F})$ then $L \in \cl(\mathcal{F}_L)$.
		The second inclusion is implied by \cref{lem:intersection}.
		The third inclusion holds since if $F \in \mathcal{F}_L$ for $L \in \cl(\mathcal{F})_K$ then $\hom(F, K) > 0$.
	\end{proof}
	For essentially profinite graph classes, \cref{thm:kwiecien22} implies that the converse of \cref{lem:cl-col} holds.
	
	\begin{lemma} \label{lem:pf-col}
		For every essentially profinite $\mathcal{F}$, it holds that $\cl(\mathcal{F}_K) = \cl(\mathcal{F})_K$ for every graph $K$.
	\end{lemma}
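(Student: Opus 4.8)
The plan is to prove the inclusion $\cl(\mathcal{F}_K) \subseteq \cl(\mathcal{F})_K$, since the reverse inclusion is exactly \cref{lem:cl-col}. So I would fix a graph $K$, take an arbitrary $L \in \cl(\mathcal{F}_K)$, and show that $\hom(L, K) > 0$ and $L \in \cl(\mathcal{F})$, which together say precisely that $L \in \cl(\mathcal{F})_K$.

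First I would put myself in a position to apply \cref{thm:kwiecien22}. By \cref{lem:reduction} applied to the class $\mathcal{F}_K$, the hypothesis $L \in \cl(\mathcal{F}_K)$ is equivalent to $L \in \cl\bigl((\mathcal{F}_K)_L\bigr)$, where $(\mathcal{F}_K)_L = \{F \in \mathcal{F} \mid \hom(F, K) > 0 \text{ and } \hom(F, L) > 0\}$. This family is a subfamily of the essentially finite family $\mathcal{F}_K$, hence essentially finite, and each of its members is $L$-colourable; thus \cref{thm:kwiecien22}, with target graph $L$, applies and yields, for $\mathcal{C} \coloneqq \Gamma\bigl((\mathcal{F}_K)_L \cup \{L\}\bigr)$, that $\vec{L} \in \spn\{\vec{F} \in \mathbb{R}^{\mathcal{C}} \mid F \in (\mathcal{F}_K)_L\}$. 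I would then fix such a representation $\vec{L} = \sum_i \alpha_i \vec{F_i}$ with all $F_i \in (\mathcal{F}_K)_L$.

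Reading off the $C$-coordinate of this equation for any connected component $C$ of $L$ (where $\vec{L}_C \geq 1$) shows $(\vec{F_i})_C \geq 1$ for some $i$, i.e.\ $C$ is a component of some $F_i \in \mathcal{F}_K$; since $\hom(F_i, K) > 0$ and $\hom$ is multiplicative over components (\cref{eq:coproduct}), $\hom(C, K) > 0$, and multiplying over all components of $L$ gives $\hom(L, K) > 0$. For $L \in \cl(\mathcal{F})$ I would invoke \cref{lem:reduction} once more: it suffices that $L \in \cl(\mathcal{F}_L)$. Now $\mathcal{F}_L$ is essentially finite with all members $L$-colourable, and $(\mathcal{F}_K)_L \subseteq \mathcal{F}_L$, so $\mathcal{C} \subseteq \mathcal{C}' \coloneqq \Gamma(\mathcal{F}_L \cup \{L\})$; reinterpreting $\vec{L}$ and the $\vec{F_i}$ as elements of $\mathbb{R}^{\mathcal{C}'}$ (extending by zeros on the new coordinates), the relation $\vec{L} = \sum_i \alpha_i \vec{F_i}$ persists and witnesses $\vec{L} \in \spn\{\vec{F} \in \mathbb{R}^{\mathcal{C}'} \mid F \in \mathcal{F}_L\}$. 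By \cref{thm:kwiecien22}, $L \in \cl(\mathcal{F}_L) = \cl(\mathcal{F})$, and together with $\hom(L, K) > 0$ this gives $L \in \cl(\mathcal{F})_K$, completing the argument. The only genuine care point is the index-set bookkeeping in that last step — verifying that a spanning relation over $\mathbb{R}^{\mathcal{C}}$ re-reads correctly over the larger $\mathbb{R}^{\mathcal{C}'}$ — which is routine because the component-multiplicity vector $\vec{F}$ of a graph is unchanged by enlarging the ambient index set, as long as that set still contains all components of $F$.
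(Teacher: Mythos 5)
Your proof is correct and follows essentially the same route as the paper: you take the inclusion $\cl(\mathcal{F})_K \subseteq \cl(\mathcal{F}_K)$ from \cref{lem:cl-col} and derive the reverse inclusion by combining \cref{lem:reduction} with the span characterisation of \cref{thm:kwiecien22} to deduce $K$-colourability of any $L \in \cl(\mathcal{F}_K)$ (your explicit reduction to $(\mathcal{F}_K)_L$ is in fact slightly more careful than the paper's phrasing, which elides that reduction). The one place you overcomplicate matters is in establishing $L \in \cl(\mathcal{F})$: you re-run \cref{lem:reduction} and \cref{thm:kwiecien22} against $\mathcal{F}_L$ with the index-set bookkeeping you flagged, whereas it is immediate from the monotonicity of $\cl$ that $\mathcal{F}_K \subseteq \mathcal{F}$ gives $\cl(\mathcal{F}_K) \subseteq \cl(\mathcal{F})$ directly, which is what the paper does. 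Your detour is sound but unnecessary.
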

	\begin{proof}
		That $\cl(\mathcal{F})_K \subseteq \cl(\mathcal{F}_K)$ is the assertion of \cref{lem:cl-col}.	
		Conversely, $\cl(\mathcal{F}_K) \subseteq \cl(\mathcal{F})$ since $\mathcal{F}_K \subseteq \mathcal{F}$. It remains to argue that every $F \in \cl(\mathcal{F}_K)$ is $K$-colourable.
		If $F \in \cl(\mathcal{F}_K)$  then, by \cref{thm:kwiecien22}, $\Gamma(F) \subseteq \Gamma(\mathcal{F}_K)$. In other words, all connected components of $F$ are $K$-colourable. This implies that $F$ is $K$-colourable. Hence, $\cl(\mathcal{F}_K) \subseteq \cl(\mathcal{F})_K$.
	\end{proof}
	
	The concludes the preparations for the proof of \cref{thm:profinite}.
	
	\begin{proof}[Proof of \cref{thm:profinite}]
		Suppose that $\mathcal{F}$ is homomorphism distinguishing closed.
		Let $K$ be a graph such that $\Gamma(K) \subseteq \Gamma(\mathcal{F}_K)$.
		If $\vec{K} \in \spn \{\vec{F} \in \mathbb{R}^{\Gamma(\mathcal{F}_{K})} \mid F \in \mathcal{F}_{K}\}$
		then \cref{thm:kwiecien22} applies. Hence, $K \in \cl(\mathcal{F})$ and thus $K \in \mathcal{F}$ since $\mathcal{F}$ is homomorphism distinguishing closed.
		
		Assuming \cref{pf2}, let $L \not\in \mathcal{F}$.
		It follows that $\vec{L} \not\in \spn\{\vec{F} \in \mathbb{R}^{\Gamma(\mathcal{F}_L \cup \{L\}) } \mid F \in \mathcal{F}_L\}$.
		Indeed, if $\Gamma(L) \subseteq \Gamma(\mathcal{F}_L)$ then this holds by \cref{pf2}. If $\Gamma(L) \not\subseteq \Gamma(\mathcal{F}_L)$ then $L$ has a connected component which is not in $\mathcal{F}_L$ and the claim follows readily.
		Hence, by \cref{thm:kwiecien22,lem:reduction}, $L \not\in \cl(\mathcal{F})$. So $\mathcal{F}$ is homomorphism distinguishing closed.
		
		The equivalence of \cref{pf1,pf3} follows from \cref{lem:pf-col}. Indeed, if $\mathcal{F}$ is homomorphism distinguishing closed then
		$\cl(\mathcal{F}_K) = \cl(\mathcal{F})_K = \mathcal{F}_K$ for every $K$.
		Conversely,
		\[
		\cl(\mathcal{F}) = \bigcup_{K} \cl(\mathcal{F})_K = \bigcup_{K} \cl(\mathcal{F}_K) = \bigcup_K \mathcal{F}_K = \mathcal{F}. \qedhere
		\]
	\end{proof}
	
	Finally, we deduce the following \cref{lem:def-epf,lem:def-ef} from \cref{thm:kwiecien22}:

	\begin{corollary} \label{lem:def-ef}
		If a graph class $\mathcal{F}$ is essentially finite then $\cl(\mathcal{F})$ is essentially finite.
	\end{corollary}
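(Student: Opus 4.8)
The plan is to establish the inclusion $\Gamma(\cl(\mathcal{F})) \subseteq \Gamma(\mathcal{F})$; since $\Gamma(\mathcal{F})$ is finite by hypothesis, this immediately yields that $\cl(\mathcal{F})$ is essentially finite.

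First I would fix an arbitrary $K \in \cl(\mathcal{F})$ and pass to its $K$-colourable part: by \cref{lem:reduction}, $K \in \cl(\mathcal{F}_K)$. The family $\mathcal{F}_K$ is again essentially finite, since $\Gamma(\mathcal{F}_K) \subseteq \Gamma(\mathcal{F})$, and by definition $\hom(F, K) > 0$ for every $F \in \mathcal{F}_K$. Hence \cref{thm:kwiecien22} is applicable to the family $\mathcal{F}_K$ and the graph $K$, and with $\mathcal{C} \coloneqq \Gamma(\mathcal{F}_K \cup \{K\})$ it yields $\vec{K} \in \spn\{\vec{F} \in \mathbb{R}^{\mathcal{C}} \mid F \in \mathcal{F}_K\}$.

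The decisive step is then to read off that this membership forces $\Gamma(K) \subseteq \Gamma(\mathcal{F}_K)$: were there a connected component $C$ of $K$ with $C \notin \Gamma(\mathcal{F}_K)$, we would have $\vec{F}_C = 0$ for every $F \in \mathcal{F}_K$ whereas $\vec{K}_C \geq 1$, contradicting that $\vec{K}$ lies in the span. Therefore $\Gamma(K) \subseteq \Gamma(\mathcal{F}_K) \subseteq \Gamma(\mathcal{F})$, and since $K \in \cl(\mathcal{F})$ was arbitrary, $\Gamma(\cl(\mathcal{F})) \subseteq \Gamma(\mathcal{F})$, which is finite.

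I do not expect a genuine obstacle here; the statement is essentially a repackaging of \cref{thm:kwiecien22} — indeed it is the same component-containment phenomenon already exploited in the proof of \cref{lem:pf-col}. The only points deserving a line of care are verifying the hypotheses of \cref{thm:kwiecien22} for $\mathcal{F}_K$ (essential finiteness and $K$-colourability, both immediate) and, if one wishes to be scrupulous, the degenerate case $\mathcal{F}_K = \emptyset$, where the span is $\{0\}$, so that $\vec{K} = 0$ forces $K$ to have no connected components and $\Gamma(K) = \emptyset \subseteq \Gamma(\mathcal{F})$ holds trivially.
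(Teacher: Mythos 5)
Your proof is correct and follows essentially the same route as the paper's: reduce to the $K$-colourable part via \cref{lem:reduction}, invoke \cref{thm:kwiecien22} to place $\vec{K}$ in the span, and read off $\Gamma(K)\subseteq\Gamma(\mathcal{F}_K)\subseteq\Gamma(\mathcal{F})$. You give slightly more detail than the paper (spelling out why the span membership forces the component containment, and the degenerate case $\mathcal{F}_K=\emptyset$), but the argument is the same.
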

	\begin{proof}
				Let $K \in \cl(\mathcal{F})$. By \cref{lem:reduction}, $K \in \cl(\mathcal{F}_K)$. 
		By \cref{thm:kwiecien22}, $\vec{K} \in \spn \{\vec{F} \in \mathbb{R}^{\Gamma(\mathcal{F}_K \cup \{K\})} \mid F \in \mathcal{F}_K\}$. In particular, $\Gamma(K) \subseteq \Gamma(\mathcal{F}_K)$. Hence, $\Gamma(\cl(\mathcal{F})) = \bigcup_{K \in \cl(\mathcal{F})} \Gamma(K) \subseteq \bigcup_{K \in \cl(\mathcal{F})} \Gamma(\mathcal{F}_K) \subseteq \Gamma(\mathcal{F})$. Thus, $\cl(\mathcal{F})$ is essentially finite.
	\end{proof}

	Since the class of all graphs is not essentially profinite,
the following \cref{lem:def-epf} implies that no homomorphism indistinguishability relation of an essentially profinite graph class is as fine as isomorphism.

\begin{corollary} \label{lem:def-epf}
	If a graph class $\mathcal{F}$ is essentially profinite then $\cl(\mathcal{F})$ is essentially profinite.
\end{corollary}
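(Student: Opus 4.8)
The plan is to chain together the two lemmas that were already established for this purpose, namely \cref{lem:pf-col} and \cref{lem:def-ef}. Fix an arbitrary graph $K$; the goal is to show that $\cl(\mathcal{F})_K$ is essentially finite, i.e.\@ that $\Gamma(\cl(\mathcal{F})_K)$ is a finite set of connected graphs. The first step is to observe that, since $\mathcal{F}$ is essentially profinite by hypothesis, \cref{lem:pf-col} applies and yields the identity $\cl(\mathcal{F})_K = \cl(\mathcal{F}_K)$. This reduces the problem to showing that $\cl(\mathcal{F}_K)$ is essentially finite.

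The second step is to note that $\mathcal{F}_K$ is essentially finite: this is immediate from the definition of essential profiniteness of $\mathcal{F}$ (\cref{def:profinite}), applied to the graph $K$ itself. Hence \cref{lem:def-ef} applies to the graph class $\mathcal{F}_K$ and gives that $\cl(\mathcal{F}_K)$ is essentially finite. Combining this with the identity from the first step, $\cl(\mathcal{F})_K = \cl(\mathcal{F}_K)$ is essentially finite. Since $K$ was arbitrary, $\cl(\mathcal{F})$ is essentially profinite by \cref{def:profinite}, which completes the argument.

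I do not expect any genuine obstacle here: all of the substantive work—in particular the appeal to the theorem of Kwiecie{\'n}, Marcinkowski, and Ostropolski-Nalewaja (\cref{thm:kwiecien22}) that underlies both \cref{lem:pf-col} and \cref{lem:def-ef}—has already been carried out. The only point deserving a moment's care is making sure that the two lemmas are invoked with the correct graph classes: \cref{lem:pf-col} is applied to $\mathcal{F}$ (which is essentially profinite, as required by its hypothesis), whereas \cref{lem:def-ef} is applied to $\mathcal{F}_K$ (which is essentially finite, as required by its hypothesis). With that bookkeeping in place the proof is a two-line deduction.
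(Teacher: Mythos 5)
Your proof is correct and takes essentially the same route as the paper: fix $K$, reduce to showing $\cl(\mathcal{F})_K$ is essentially finite by relating it to $\cl(\mathcal{F}_K)$, and invoke \cref{lem:def-ef} on the essentially finite class $\mathcal{F}_K$. The only cosmetic difference is that you invoke the equality of \cref{lem:pf-col} (which requires the essentially profinite hypothesis), whereas the paper gets by with the one-sided inclusion $\cl(\mathcal{F})_K \subseteq \cl(\mathcal{F}_K)$ from \cref{lem:cl-col}, which holds unconditionally and is all that is needed since subclasses of essentially finite classes are essentially finite.
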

\begin{proof}
	It has to be argued that for every $K$ the class $\cl(\mathcal{F})_K$ is essentially finite. By \cref{lem:cl-col}, $\cl(\mathcal{F})_K \subseteq \cl(\mathcal{F}_K)$ and the right hand-side is an essentially finite graph class by \cref{lem:def-ef}.
\end{proof}
	
	\subsection{Applications of \cref{thm:profinite}}
	
	To demonstrate the inner workings of condition~\cref{pf2} in \cref{thm:profinite}, we consider the following examples.
	The first example shows that not even the weakest closure property from \cref{fig:relationship} is shared by all homomorphism distinguishing closed families.
	The second example answers a question from \cite[p.\@~29]{roberson_oddomorphisms_2022} negatively: Is the disjoint union closure of the union of homomorphism distinguishing closed families homomorphism distinguishing closed?
	Finally, the second and third example illustrate that the inclusions in \cref{lem:intersection} can be proper.
	
	\begin{example} \label{ex1} \label{ex:union}
		Let $F_1$ and $F_2$ be connected non-isomorphic homomorphically equivalent graphs.
		\begin{enumerate}
			\item The class $\mathcal{F}_1 \coloneqq \{n(F_1 + F_2) \mid n \geq 1\}$ is homomorphism distinguishing closed and not closed under taking summands.
			\item For the homomorphism distinguishing closed $\mathcal{F}_2 \coloneqq \{n F_1 \mid n \geq 1\}$, 
			the disjoint union closure of  $\mathcal{F}_1 \cup \mathcal{F}_2$ is not homomorphism distinguishing closed,
			\item Let $\mathcal{F}_3 \coloneqq \{n_1F_1 + n_2F_2 \mid n_1 \geq n_2 \geq 1\}$ and $\mathcal{F}_4 \coloneqq \{n_1F_1 + n_2F_2 \mid n_2 \geq n_1 \geq 1\}$.
			Then $\cl(\mathcal{F}_3 \cap \mathcal{F}_4) \subsetneq \cl(\mathcal{F}_3) \cap \cl(\mathcal{F}_4)$.
		\end{enumerate}
	\end{example}
	\begin{proof}For the first example, observe that clearly $\Gamma(\mathcal{F}) = \{F_1, F_2\}$.
		To verify the assumptions of \cref{thm:profinite}, let $K \coloneqq n_1 F_1 + n_2 F_2$ for $n_1, n_2 \geq 0$. 
		It holds that $\spn\{\vec{F} \in \mathbb{R}^{\Gamma(\mathcal{F})} \mid F \in \mathcal{F}\} = \left\{ \lambda \left( \begin{smallmatrix}
			1\\1
		\end{smallmatrix} \right) \mid \lambda \in \mathbb{R} \right\}$ and thus $\vec{K}$ is in this space only if $n_1 = n_2$. In this case, however, $K \in \mathcal{F}$ and thus $\mathcal{F}$ is homomorphism distinguishing closed. It remains to observe that $F_1 \not\in \mathcal{F}$.
		
		In the second example, by \cref{thm:profinite}, $\mathcal{F}_2$ is homomorphism distinguishing closed. 
		The disjoint union closure of $\mathcal{F}_1 \cup \mathcal{F}_2$ is $\mathcal{F} \coloneqq \{n_1 F_1 + n_2 F_2 \mid n_1 \geq n_2 \geq 1\}$.
		Since $\hom(F_1, F_2) > 0$, it holds that $\mathcal{F}_{F_2} = \mathcal{F}$ and hence $\vec{F_2} \in \spn\{\vec{F} \in \mathbb{R}^{\Gamma(\mathcal{F})} \mid \vec{F} \in \mathcal{F}_{F_2}\} = \mathbb{R}^{\Gamma(\mathcal{F})}$. However, $F_2 \not\in \mathcal{F}$. By \cref{thm:profinite}, $\mathcal{F}$ is not homomorphism distinguishing closed.
		
		In the third example, by \cref{thm:profinite}, $\cl(\mathcal{F}_3) = \cl(\mathcal{F}_4) = \{n_1F_1 + n_2F_2 \mid n_1, n_2 \geq 1\}$.
		However, $\mathcal{F}_3 \cap \mathcal{F}_4 = \{n(F_1 + F_2) \mid n \geq 1\}$, which is homomorphism distinguishing closed by the first example.
	\end{proof}

	\subsection{Complexity of Homomorphism Indistinguishability over Essentially Profinite Graph Classes}
	
	From a computational perspective, the central open question on homomorphism indistinguishability regards the complexity and computability of the decision problem  $\HomInd(\mathcal{F})$. 
	For a fixed graph class $\mathcal{F}$, this problem asks to determine whether two input graphs $G$ and $H$ are  homomorphism indistinguishable over~$\mathcal{F}$.
	While $\HomInd(\mathcal{TW}_k)$ for the class~$\mathcal{TW}_k$ of all graphs of treewidth at most $k$ is polynomial-time~\cite{dvorak_recognizing_2010,cai_optimal_1992}, 
	$\HomInd(\mathcal{P})$ for the class~$\mathcal{P}$ of all planar graphs is undecidable~\cite{mancinska_quantum_2019}.
	Furthermore, $\HomInd(\mathcal{G})$ for the class~$\mathcal{G}$ of all graphs coincides with graph isomorphism, which poses well-known complexity-theoretic challenges~\cite{babai_graph_2016}.
	These examples illustrate that the complexity-theoretic landscape of the problems  $\HomInd(\mathcal{F})$ for graph classes $\mathcal{F}$ is diverse and far from being comprehensively understood.
	
	In this section, we show that for all essentially finite classes $\mathcal{F}$, $\HomInd(\mathcal{F})$ is in polynomial time. 
	For essentially profinite $\mathcal{F}$, the problem can be arbitrarily hard.
	
	\begin{theorem} \label{thm:finite}
		Let $\mathcal{F}$ be an essentially finite graph class. 
		Then there exists a finite graph class $\mathcal{F}'$ such that $G \equiv_{\mathcal{F}} H$ if and only if $G \equiv_{\mathcal{F}'} H$ for all graphs $G$ and $H$.
		In particular, there is a polynomial-time algorithm for $\HomInd(\mathcal{F})$.
	\end{theorem}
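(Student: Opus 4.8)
The plan is to show that $\equiv_{\mathcal{F}}$ is already determined by a carefully chosen finite subfamily $\mathcal{F}' \subseteq \mathcal{F}$. Write $\Gamma(\mathcal{F}) = \{C_1, \dots, C_m\}$. By iterating \cref{eq:coproduct}, $\hom(F, G) = \prod_{C \in \Gamma(\mathcal{F})} \hom(C, G)^{\vec{F}_C}$ for every $F \in \mathcal{F}$, so whether $G \equiv_{\mathcal{F}} H$ depends on $G$ only through the vector $(\hom(C, G))_{C \in \Gamma(\mathcal{F})} \in \mathbb{Z}_{\geq 0}^{\Gamma(\mathcal{F})}$, and $\equiv_{\mathcal{F}}$ is a conjunction of multiplicative equalities indexed by the vectors $\vec{F}$. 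A first attempt would be to pick $\mathcal{F}'$ realising a basis of $\spn\{\vec{F} \mid F \in \mathcal{F}\}$. This is finite but insufficient: for $\mathcal{F} = \{C_1 + C_2,\, C_2,\, a C_1\}$ with $a$ large and $C_2 \not\to G$, the equalities for $C_1 + C_2$ and $C_2$ read $0 = 0$ and carry no information about $\hom(a C_1, G)$, even though the vectors of $C_1 + C_2$ and $C_2$ form a basis.

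To repair this, for every $S \subseteq \Gamma(\mathcal{F})$ I would set $\mathcal{F}_S \coloneqq \{F \in \mathcal{F} \mid \Gamma(F) \subseteq S\}$, pick finitely many graphs in $\mathcal{F}_S$ whose vectors form a basis $\mathcal{B}_S$ of $\spn\{\vec{F} \mid F \in \mathcal{F}_S\}$, and let $\mathcal{F}' \coloneqq \bigcup_{S \subseteq \Gamma(\mathcal{F})} \mathcal{B}_S$. As there are $2^m$ sets $S$ and $|\mathcal{B}_S| \leq m$, the family $\mathcal{F}' \subseteq \mathcal{F}$ is finite; hence $G \equiv_{\mathcal{F}} H$ implies $G \equiv_{\mathcal{F}'} H$. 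For the converse, suppose $G \equiv_{\mathcal{F}'} H$, fix $F \in \mathcal{F}$, put $S \coloneqq \Gamma(F)$, and write $\vec{F} = \sum_{B \in \mathcal{B}_S} \mu_B \vec{B}$ with $\mu_B \in \mathbb{Q}$; each $B \in \mathcal{B}_S$ satisfies $\Gamma(B) \subseteq S$, and since $\vec{F}$ is this combination also $S = \Gamma(F) \subseteq \bigcup_{B \in \mathcal{B}_S} \Gamma(B)$, so $\bigcup_{B \in \mathcal{B}_S} \Gamma(B) = S$. Writing $Z(G) \coloneqq \{C \in \Gamma(\mathcal{F}) \mid \hom(C, G) = 0\}$, one first shows that $\Gamma(F) \cap Z(G) = \emptyset$ iff $\Gamma(F) \cap Z(H) = \emptyset$: if $\Gamma(F) \cap Z(G) = \emptyset$ then $\hom(B, G) \neq 0$, hence $\hom(B, H) = \hom(B, G) \neq 0$, hence $\Gamma(B) \cap Z(H) = \emptyset$ for each $B \in \mathcal{B}_S$, and the union over $B$ gives $\Gamma(F) \cap Z(H) = \emptyset$; the other direction is symmetric. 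If $\Gamma(F)$ meets $Z(G)$, and hence $Z(H)$, then $\hom(F, G) = 0 = \hom(F, H)$. Otherwise $\hom(C, G)$ and $\hom(C, H)$ are positive for all $C \in S$, so passing to logarithms turns the equalities $\hom(B, G) = \hom(B, H)$, $B \in \mathcal{B}_S$, into linear identities; since $\vec{F}$ lies in the span of the $\vec{B}$, the same identity holds for $\vec{F}$, i.e.\ $\hom(F, G) = \hom(F, H)$. Thus $\equiv_{\mathcal{F}} \,=\, \equiv_{\mathcal{F}'}$.

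Given this finite $\mathcal{F}'$, the polynomial-time algorithm for $\HomInd(\mathcal{F})$ simply computes $\hom(F', G)$ and $\hom(F', H)$ for each of the finitely many $F' \in \mathcal{F}'$ by brute force, in time $|V(G)|^{O(1)} + |V(H)|^{O(1)}$, and accepts iff all these values coincide. The main obstacle is exactly the zero-pattern bookkeeping above: the naive basis of $\spn\{\vec{F} \mid F \in \mathcal{F}\}$ fails, and one has to see why ranging over all $S \subseteq \Gamma(\mathcal{F})$ fixes it, namely that on the set of graphs $G$ with a fixed set $Z(G)$ of non-mapping components the relation $\equiv_{\mathcal{F}}$ is governed by $\spn\{\vec{F} \mid \Gamma(F) \cap Z(G) = \emptyset\}$, while $Z(G)$ itself is recovered from which of the counts $\hom(B, -)$, $B \in \mathcal{F}'$, vanish.
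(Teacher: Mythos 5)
Your proof is correct and establishes the theorem, but it takes a more self-contained route than the paper. The paper constructs $\mathcal{F}'$ similarly (for each $\Lambda \subseteq \Gamma(\mathcal{F})$ it takes a spanning set of $\{\vec{F} \mid F \in \mathcal{F}_L\}$ where $L = \coprod_{C \in \Lambda} C$, i.e.\ the $L$-colourable members of $\mathcal{F}$, a slightly larger family than your $\{F \mid \Gamma(F) \subseteq S\}$), but then concludes by invoking \cref{lem:reduction} and \cref{thm:kwiecien22}: for $K \in \mathcal{F}$ with $\Lambda = \Gamma(K)$ and $L = \coprod_{C\in\Lambda} C$, homomorphic equivalence of $K$ and $L$ gives $\mathcal{F}'_K = \mathcal{F}'_L$, and $\vec K$ lying in the span of $\{\vec F_i^\Lambda\}$ then puts $K$ into $\cl(\mathcal{F}')$ by the ``if'' direction of \cref{thm:kwiecien22}. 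You instead re-prove directly the fragment of \cref{thm:kwiecien22} that is actually needed, via the case split on the zero-pattern $Z(\cdot)$ and the logarithmic linearisation. The argument that $\Gamma(F) \cap Z(G) = \emptyset \iff \Gamma(F) \cap Z(H) = \emptyset$ (using $\bigcup_{B \in \mathcal{B}_S} \Gamma(B) = S$, which you correctly justify from the fact that $\vec F$ has full support on $S$ and is a combination of the $\vec B$) is sound, and the remaining multiplicative/logarithmic step is exactly the easy direction of \cref{thm:kwiecien22}'s proof. Both constructions of $\mathcal{F}'$ have size at most $2^{|\Gamma(\mathcal{F})|}\cdot|\Gamma(\mathcal{F})|$, and your counterexample to the naive one-basis attempt is precisely the phenomenon that forces the exponential blow-up in both proofs; the paper absorbs this observation silently into the hypothesis $\hom(F,K)>0$ of \cref{thm:kwiecien22}. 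What the paper's route buys is brevity by reuse; what yours buys is an elementary argument that avoids the (unneeded for this direction) perturbation machinery of \cref{thm:kwiecien22} and works verbatim for arbitrary, not only simple, target graphs.
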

	\begin{proof}Construct $\mathcal{F}'$ by choosing for every $\Lambda \subseteq \Gamma(\mathcal{F})$ a
		finite set $F_1^\Lambda, \dots, F_\ell^\Lambda \in \mathcal{F}_L$ such that the 
		$\vec{F}_1^\Lambda, \dots, \vec{F}_\ell^\Lambda \in \mathbb{R}^{\Gamma(\mathcal{F})}$ span the finite-dimensional space $\spn\{\vec{F} \in \mathbb{R}^{\Gamma(\mathcal{F})} \mid F \in \mathcal{F}_L\}$
		where $L \coloneqq \coprod_{C \in \Lambda} C$ and taking the union over all graphs $F_1^\Lambda, \dots, F_\ell^\Lambda \in \mathcal{F}_L$ constructed in this way. By construction, $\mathcal{F}' \subseteq \mathcal{F}$. Thus, it suffices to show that $\mathcal{F} \subseteq \cl(\mathcal{F}')$.
		
		Let $K \in \mathcal{F}$ be a graph. 
		By \cref{lem:reduction}, $K \in \cl(\mathcal{F}')$ if and only if $K \in \cl(\mathcal{F}'_K)$.
		Let $L \coloneqq \coprod_{C \in \Lambda} C$ where $\Lambda \coloneqq \Gamma(K)$. Since $L$ and $K$ are homomorphically equivalent, it holds that $\mathcal{F}'_K = \mathcal{F}'_L$.
		By construction, $\vec{K} \in \spn \{\vec{F}_1^\Lambda, \dots, \vec{F}_\ell^\Lambda\} \subseteq \mathbb{R}^{\Gamma(\mathcal{F})}$ and, by projection, the analogous statement holds in $\mathbb{R}^{\Delta}$ for $\Delta \coloneqq \Gamma(\{K, F_1^\Lambda, \dots, F_\ell^\Lambda\})$.
		By \cref{thm:kwiecien22}, $K \in \cl(\{F_1^{\Lambda}, \dots, F_\ell^{\Lambda}\}) \subseteq \cl(\mathcal{F}')$.
	\end{proof}
	For essentially profinite $\mathcal{F}$, the problem $\HomInd(\mathcal{F})$ can be arbitrarily hard.
	For a set $S \subseteq \mathbb{N}$, write $\Mem(S)$ for the problem of deciding given $n \in \mathbb{N}$ whether $n \not\in S$.
	Here, $n$ is encoded unarily, i.e.\@ the size of an instance is $n$. 
	Note that (the complement of) any decision problem $L \subseteq \{0,1\}^*$ can be reduced to $\Mem(S)$ for some $S$ at the expense of an exponential increase in the size of the instances.
	Recall the definition of the graph class $\mathcal{K}^S$ from \cref{ex:cliques}.

	\begin{theorem} \label{thm:complexity-profinite}
		For every $S \subseteq \mathbb{N}$, the problem $\Mem(S)$ polynomial-time many-one reduces to $\HomInd(\mathcal{K}^S)$.
	\end{theorem}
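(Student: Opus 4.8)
The plan is to construct, uniformly in $S$ and in time polynomial in $n$ (which is given in unary), a pair of simple graphs $(G_n, H_n)$ with $\hom(K_m, G_n) = \hom(K_m, H_n)$ for all $m \neq n$ and $\hom(K_n, G_n) \neq \hom(K_n, H_n)$, and to take $n \mapsto (G_n, H_n)$ as the reduction. The first step is to record why such a pair suffices: by \cref{eq:coproduct}, $\hom(\coprod_i K_{s_i}, G) = \prod_i \hom(K_{s_i}, G)$, so $G \equiv_{\mathcal{K}^S} H$ holds if and only if $\hom(K_s, G) = \hom(K_s, H)$ for every $s \in S$. Hence if $n \in S$ then $K_n \in \mathcal{K}^S$ already separates $G_n$ and $H_n$, whereas if $n \notin S$ then every $s \in S$ differs from $n$ and so $\hom(K_s, G_n) = \hom(K_s, H_n)$ for all $s \in S$, i.e.\ $G_n \equiv_{\mathcal{K}^S} H_n$. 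Thus $G_n \equiv_{\mathcal{K}^S} H_n$ iff $n \notin S$, i.e.\ iff $n$ is a yes-instance of $\Mem(S)$.

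To build the pair I would pass to clique polynomials. Writing $C_G(x) := \sum_{m \geq 0} c_m(G) x^m$ with $c_m(G)$ the number of $m$-cliques of $G$, and using that $\hom(K_m, G) = m!\,c_m(G)$ for simple $G$, the requirement on $(G_n, H_n)$ becomes: $C_{G_n} - C_{H_n}$ is a nonzero scalar multiple of $x^n$. I would then exploit that a clique of the join $G * H$ is the disjoint union of a clique of $G$ and a clique of $H$, giving $C_{G * H} = C_G\, C_H$, while a clique of $G + H$ lies in one part, giving $C_{G + H} = C_G + C_H - 1$. From this the set $\mathcal{D} := \{\, C_G - C_H : G, H \text{ simple}\,\}$ is closed under negation (swap $G$ and $H$), under addition (since $(C_G - C_H) + (C_{G'} - C_{H'}) = C_{G + G'} - C_{H + H'}$), and under multiplication (since $(C_G - C_H)(C_{G'} - C_{H'}) = C_{(G*G') + (H*H')} - C_{(G*H') + (H*G')}$, the stray constants cancelling), and it contains $x = C_{2K_1} - C_{K_1}$.

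It then follows formally that $x^n \in \mathcal{D}$ for every $n \ge 1$; the actual work is to produce a witness $(G_n, H_n)$ that is still of polynomial size, since realising $x^n = x \cdot x \cdots x$ by $n-1$ successive products in $\mathcal{D}$ makes the graphs exponentially large. Here I would use binary exponentiation: compute witnesses for $x^{2^0}, x^{2^1}, \dots, x^{2^{\lfloor \log_2 n\rfloor}}$ by repeated squaring inside $\mathcal{D}$, then multiply the $O(\log n)$ factors $x^{2^j}$ picked out by the binary expansion of $n$. Each product (in particular each squaring) replaces witness graphs on $a$ and $b$ vertices by ones on at most $2(a + b)$ vertices, so after $O(\log n)$ squarings the witness for each $x^{2^j}$ has $O(n^2)$ vertices and the final product has $O(n^3)$ vertices; everything is computable in time $\mathrm{poly}(n)$, and joins and disjoint unions of simple graphs are simple, so $G_n$ and $H_n$ are simple. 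The routine pieces are verifying $C_{G*H} = C_G C_H$, the constant cancellations in $\mathcal{D}$, and the size bookkeeping; the one genuinely load-bearing idea is that $\mathcal{D}$ is closed under multiplication (not merely under multiplication by clique polynomials), which is what makes fast exponentiation available and keeps the reduction polynomial-time.
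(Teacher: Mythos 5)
Your reduction has exactly the shape of the paper's: map $n$ to a pair of simple graphs $(G_n, H_n)$ that agree on homomorphism counts from every clique except $K_n$, and then observe, via $\hom(\coprod_i K_{s_i}, G) = \prod_i \hom(K_{s_i}, G)$, that $G_n \equiv_{\mathcal{K}^S} H_n$ iff $n \notin S$. Where you diverge is how you obtain $(G_n, H_n)$. The paper simply invokes \cref{thm:boeker-cor14}, i.e.\ Corollary~14 of B\"oker, Chen, Grohe, and Rattan~\cite{boeker_complexity_2019}, which hands over such a pair of order at most $\max\{1, 2(n-1)\}$, computable in $\mathrm{poly}(n)$. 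You instead give a self-contained construction: encode the requirement as $C_{G_n} - C_{H_n} = x^n$ for the clique polynomial $C_G(x) = \sum_m c_m(G)\,x^m$ (using $\hom(K_m,G) = m!\,c_m(G)$), verify that $\mathcal{D} = \{C_G - C_H\}$ is closed under addition and, crucially, under multiplication via $C_{G*H} = C_G C_H$ and $C_{G+H} = C_G + C_H - 1$ with the constants cancelling, seed with $x = C_{2K_1} - C_{K_1}$, and then reach $x^n$ by binary exponentiation. I have checked the closure computations and the size bookkeeping: each squaring of a witness $(a,b)$ produces $(2(a+b),2(a+b))$, so after $O(\log n)$ squarings each $x^{2^j}$ has $O(n^2)$-vertex witnesses, and multiplying the $O(\log n)$ selected factors stays within $O(n^3)$ vertices, all of them simple since joins and disjoint unions preserve simplicity. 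So your argument is correct; the trade-off is that the cited result gives a linear-size gadget while yours is cubic, but yours avoids importing an external theorem and, as you note, isolates the genuinely load-bearing algebraic fact --- that $\mathcal{D}$ is a subring of $\mathbb{Z}[x]$ --- which is what makes repeated squaring available.
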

	
	The proof of \cref{thm:complexity-profinite} is build on the following result from \cite{boeker_complexity_2019}.
	
	\begin{theorem}[{\cite[Corollary~14]{boeker_complexity_2019}}] \label{thm:boeker-cor14}
		For every $n \in \mathbb{N}$, 
		there exist graphs $G_n$ and $H_n$ of order $\leq \max\{1,2(n-1)\}$ such that for all $\ell \in \mathbb{N}$,
		\[
		\hom(K_\ell, G) \neq \hom(K_\ell, H) \iff \ell = n.
		\]
		Furthermore, $G_n$ and $H_n$ can be constructed in polynomial time in $n$.
	\end{theorem}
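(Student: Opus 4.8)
First I would translate the statement into the language of clique polynomials. Since $K_\ell$ is loopless and complete, a homomorphism $K_\ell\to G$ is an injection onto an $\ell$-element clique, so $\hom(K_\ell,G)=\ell!\,c_\ell(G)$ for $\ell\ge 1$ (where $c_\ell(G)$ is the number of $\ell$-cliques) and $\hom(K_0,G)=1$. Setting $Q_G(x):=\sum_{\ell\ge 0}c_\ell(G)x^\ell$, the required property ``$\hom(K_\ell,G)\neq\hom(K_\ell,H)\iff\ell=n$'' becomes ``$Q_G-Q_H=\mu x^n$ for some $\mu\neq 0$''. Two constraints fix the shape of the search: $c_1$ is the number of vertices, so $G_n$ and $H_n$ must have equally many vertices, namely at most $2(n-1)$; and whichever of the two has the larger $c_n$ must contain a copy of $K_n$. (The case $n=1$ is handled by $G_1=K_1$ and $H_1$ the empty graph.)

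The combinatorial toolkit for the construction is the clique-polynomial calculus: $Q_{K_m}=(1+x)^m$; $Q_{A+B}=Q_A+Q_B-1$ for disjoint union; $Q_{A\ast B}=Q_A\cdot Q_B$ for the join; $Q_{K_{a_1,\dots,a_r}}=\prod_{i=1}^r(1+a_ix)$ for complete multipartite graphs (count transversals); and $Q_G=Q_{G-v}+x\,Q_{G[N(v)]}$ for any vertex $v$. A warm-up already producing the identity $Q_G-Q_H=x^n$ is the disjoint union of cliques $G_n^{0}:=\coprod_{1\le k\le n,\;k\equiv n\pmod 2}\binom{n}{k}K_k$ and $H_n^{0}:=\coprod_{1\le k\le n,\;k\not\equiv n\pmod 2}\binom{n}{k}K_k$: by the binomial theorem $\sum_{k=1}^{n}\binom{n}{k}(-1)^{n-k}(1+x)^k=((1+x)-1)^n-(-1)^n=x^n-(-1)^n$, and the stray $(-1)^n$ is exactly cancelled by the difference in the numbers of connected components, $r_{G^0}-r_{H^0}=(-1)^{n+1}$, which enters $Q_{G^0}-Q_{H^0}$ with a minus sign through the constant $-1$ of the disjoint-union rule. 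This is clean, but $G_n^0$ and $H_n^0$ have $\Theta(n\,2^{n})$ vertices; it only pinpoints where the difficulty lies, namely achieving the bound $2(n-1)$.

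The economical construction I would aim for keeps both graphs on exactly $2(n-1)$ vertices. Take $H_n$ to be a $K_n$-free but clique-rich graph on $2(n-1)$ vertices whose clique polynomial has a convenient closed form---a complete multipartite graph such as $K_{1,\dots,1,n}$ (with $n-2$ singleton parts and one part of size $n$), for which $Q_{H_n}(x)=(1+x)^{n-2}(1+nx)$ and whose clique number is $n-1$---and take $G_n$ to be a graph on the same vertex set built from one copy of $K_n$ together with an auxiliary gadget on the remaining $n-2$ vertices (with edges into the $K_n$ permitted), engineered so that $Q_{G_n}-Q_{H_n}=\mu x^n$ for some $\mu\ge 1$; then $\hom(K_\ell,G_n)-\hom(K_\ell,H_n)=\ell!\,(c_\ell(G_n)-c_\ell(H_n))$ vanishes exactly for $\ell\neq n$. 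Concretely, the gadget must contribute to $Q_{G_n}$, beyond the term $(1+x)^n$ of the $K_n$, a prescribed polynomial---for the choice above this is $x\bigl((n-2)-x\bigr)(1+x)^{n-2}+\mu x^n$---and one checks that it has vanishing constant term, non-negative coefficients and low degree, which is the feasibility condition to be met. Once $G_n$ and $H_n$ are pinned down, verifying $Q_{G_n}-Q_{H_n}=\mu x^n$ is a routine application of the identities above, and polynomial-time constructibility in $n$ is automatic, as both graphs are specified by explicit vertex- and edge-lists of size $O(n^2)$.

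The main obstacle is precisely the design and analysis of this gadget under the tight vertex budget. Only $n-2$ spare vertices are available, and they must simultaneously (i) create the desired surplus of $n$-cliques, (ii) leave every clique count $c_\ell$ with $\ell\neq n$ equal to the corresponding count of the chosen $K_n$-free host, and (iii) create no clique of size exceeding $n$. A disjoint add-on is too weak (already for $n\ge 5$ the required number of gadget edges exceeds $\binom{n-2}{2}$), so the gadget must be wired into the $K_n$; consequently each added edge disturbs clique counts at several levels at once, and forcing all levels to agree while keeping the total order at $2(n-1)$ is the delicate part of the argument---and it may require tuning both the $K_n$-free host and the multiplicity $\mu$. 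The small cases $n\le 4$ (for instance $K_2$ versus $2K_1$; $K_3+K_1$ versus $K_{1,3}$; $K_4$ with three extra triangle-free edges versus $K_{1,1,4}$) exhibit the mechanism and anchor the bookkeeping, but the uniform construction is where the real work resides.
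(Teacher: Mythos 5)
First, a point of reference: the paper does not prove this statement at all --- it is imported verbatim as Corollary~14 of B\"oker, Chen, Grohe, and Rattan (2019) --- so there is no in-paper argument to measure your attempt against; what follows measures it against what the statement actually claims. Your reduction to clique polynomials is correct ($\hom(K_\ell,G)=\ell!\,c_\ell(G)$, so the task is to realise $Q_{G}-Q_{H}=\mu x^n$ on few vertices), your clique-polynomial calculus is correct, the warm-up pair $G_n^0,H_n^0$ genuinely satisfies the required identity (it is binomial inversion), and the cases $n\le 4$ check out.

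None of this, however, proves the theorem, and the missing piece is exactly its quantitative content. The warm-up graphs have $\Theta(n2^n)$ vertices, so they meet neither the order bound $2(n-1)$ nor the polynomial-time constructibility claim; the construction that is supposed to meet the bound is never produced. The ``feasibility condition'' you state --- that the prescribed gadget contribution $x\bigl((n-2)-x\bigr)(1+x)^{n-2}+\mu x^n$ has zero constant term, non-negative coefficients and low degree --- is necessary but far from sufficient: not every such polynomial arises as the clique-count contribution of a graph on $n-2$ vertices wired into a $K_n$. Moreover, for your proposed host $K_{1,\dots,1,n}$ the contribution has coefficient $0$ at $x^{n-1}$, so no $(n-1)$-clique of $G_n$ may touch the gadget; since every $n$-clique contains $(n-1)$-cliques, this forces $\mu=1$ and simultaneously demands $\binom{n-1}{2}$ gadget-touching $(n-2)$-cliques, none extendable to an $(n-1)$-clique, all from only $n-2$ extra vertices. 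You neither solve this system for general $n$ nor show it is solvable, and you explicitly concede the host and multiplicity may need to be changed. As it stands the proposal is a correct reformulation plus a plan whose central step --- the step that \emph{is} the theorem --- is missing; to complete it you would have to exhibit and verify an explicit gadget for every $n$, or else fall back on the cited source.
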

	
	\begin{proof}[Proof of \cref{thm:complexity-profinite}]
		Consider the following reduction from $\Mem(S)$ to $\HomInd(\mathcal{K}^S)$.
		Given $n \in \mathbb{N}$, construct $G_n$ and $H_n$ via \cref{thm:boeker-cor14}.
		Then $G_n \equiv_{\mathcal{K}^S} H_n$ if and only if $n \not\in S$.
	\end{proof}

	Although \cref{thm:complexity-profinite} implies that $\HomInd(\mathcal{F})$ for essentially profinite graph classes $\mathcal{F}$ can be arbitrarily hard, the graph classes $\mathcal{K}^S$ arising there are not very well-behaved from a graph-theoretic point of view.
	In light of Roberson's conjecture \cite{roberson_oddomorphisms_2022}, minor-closed graph classes are of special interest.
	The following \cref{lem:profinite-minor} shows that considering essentially profinite rather than essentially finite graph classes, in a sense, does not add any value for minor-closed graph classes.

	\begin{lemma} \label{lem:profinite-minor}
		Every essentially profinite minor-closed graph class $\mathcal{F}$ is essentially finite.
	\end{lemma}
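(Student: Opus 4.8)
The plan is to reduce, via a single complete graph, to the defining property of essential profiniteness. The key observation will be that a minor-closed class either contains arbitrarily large cliques — in which case it is the class of all simple graphs and cannot be essentially profinite — or it omits some $K_t$, in which case it has bounded chromatic number and hence coincides with $\mathcal{F}_{K_c}$ for a single $c$.

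First I would show that, under the hypotheses, $K_t \notin \mathcal{F}$ for some $t \in \mathbb{N}$. Suppose not. Every finite simple graph $G$ is a subgraph, hence a minor, of $K_{|V(G)|}$, so if $K_n \in \mathcal{F}$ for all $n$ then minor-closedness forces $\mathcal{F}$ to be the class of all simple graphs. But that class is not essentially profinite: taking $K = K_2$ one has $\mathcal{F}_{K_2} = \{F \mid \hom(F, K_2) > 0\}$, the class of all bipartite graphs, and $\Gamma(\mathcal{F}_{K_2})$ contains the path $P_n$ for every $n$, so it is infinite. This contradicts the assumption that $\mathcal{F}$ is essentially profinite.

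Next, fixing $t$ with $K_t \notin \mathcal{F}$, minor-closedness implies that no $F \in \mathcal{F}$ has a $K_t$ minor. By a classical theorem of Mader, $K_t$-minor-free graphs have average degree bounded by a constant $d(t)$ depending only on $t$; since having no $K_t$ minor is preserved under taking subgraphs, every such graph is $d(t)$-degenerate and hence $c$-colourable for $c \coloneqq d(t)+1$, cf.\@ \cite{nesetril_sparsity_2012}. Therefore $\hom(F, K_c) > 0$ for every $F \in \mathcal{F}$, i.e.\@ $\mathcal{F} = \mathcal{F}_{K_c}$. Since $\mathcal{F}$ is essentially profinite, $\mathcal{F}_{K_c}$ — and hence $\mathcal{F}$ itself — is essentially finite, as desired.

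I do not anticipate a genuine obstacle here; the only step requiring care is invoking the correct off-the-shelf fact, namely that $K_t$-minor-free graphs have bounded chromatic number, and organising the argument so that one controls $\mathcal{F}$ through the single colour class $K_c$ rather than attempting to bound $\Gamma(\mathcal{F})$ directly.
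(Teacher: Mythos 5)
Your proof is correct and follows essentially the same route as the paper: exclude a clique minor $K_t$ (since the class of all graphs is not essentially profinite), bound the chromatic number, and conclude $\mathcal{F} = \mathcal{F}_{K_c}$ is essentially finite. The only cosmetic difference is that you invoke Mader's average-degree bound and degeneracy where the paper cites Wagner's weak form of Hadwiger's conjecture to get $2^{t-1}$-colourability directly.
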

	\begin{proof}Since the class of all graphs is not essentially profinite, there exists a number $t \in \mathbb{N}$ such that all $F \in \mathcal{F}$ do not contain the clique $K_t$ as a minor.
		By a weak form of Hadwiger's conjecture \cite{seymour_hadwigers_2016} proven by Wagner \cite{wagner_beweis_1964}, this implies that all $F \in \mathcal{F}$ are $2^{t-1}$-colourable.
		In particular, $\mathcal{F} = \mathcal{F}_{K_{2^{t-1}}}$ is essentially finite.
	\end{proof}

	\subsection{Cancellation Laws}
	
	The results summarised in \cref{tab:overview} give necessary conditions for an equivalence relation $\approx$ comparing graphs to be a homomorphism indistinguishability relation over a graph class with certain closure properties.
	They may be viewed as a step towards an axiomatic characterisation of homomorphism indistinguishability relations, similar to the characterisation~\cite{freedman_reflection_2007,lovasz_semidefinite_2009} of functions $f \colon \mathcal{G} \to \mathbb{N}$ on the set of all graphs $\mathcal{G}$ which are of the form $f = \hom(-, G)$ for some graph $G$.
	Such a characterisation should give sufficient and necessary criteria for an equivalence relation $\approx$ to be a homomorphism indistinguishability relation over some graph class.
	In this context, exploring abstract properties of equivalence relations comparing graphs and their connection to homomorphism indistinguishability is imperative.
	
	\begin{definition}
	Let $K$ be a graph. An equivalence relation $\approx$ comparing graphs \emph{admits $K$-cancellation} if for all graphs $G$ and $H$ it holds that
	\(
	G \times K \approx H \times K \implies G \approx H.
	\)
	\end{definition}
	Lovász~\cite{lovasz_cancellation_1971} proved that the isomorphism relation $\cong$ admits $K$-cancellation if and only if $K$ is non-bipartite.
	The following \cref{lem:cancellation} shows that $K$-cancellation of a homomorphism indistinguishability relation $\equiv_{\mathcal{F}}$ depends solely on the homomorphism distinguishing closure of the subclass $\mathcal{F}_K \subseteq \mathcal{F}$ of $K$-colourable graphs in $\mathcal{F}$.
	In fact, the proof is based on the observation that the equivalence relation $- \times K \equiv_{\mathcal{F}} - \times K$ coincides with the homomorphism indistinguishability relation $\equiv_{\mathcal{F}_K}$.
	
	\begin{lemma} \label{lem:cancellation}
		For a graph class $\mathcal{F}$ and a graph $K$, the following are equivalent:
		\begin{enumerate}
			\item $\equiv_{\mathcal{F}}$ admits $K$-cancellation,\label{canc1}
			\item $\mathcal{F} \subseteq \cl(\mathcal{F}_K)$,\label{canc2}
			\item $\cl(\mathcal{F}) = \cl(\mathcal{F}_K)$.\label{canc3}
		\end{enumerate}
	\end{lemma}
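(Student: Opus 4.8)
The plan is to route all three equivalences through the observation announced just before the statement: the binary relation $R_K$ on graphs defined by $G \mathrel{R_K} H :\Longleftrightarrow G \times K \equiv_{\mathcal{F}} H \times K$ is exactly $\equiv_{\mathcal{F}_K}$. To see this I would use \cref{eq:product} to write $\hom(F, G \times K) = \hom(F,G)\,\hom(F,K)$ for every graph $F$. If $F \in \mathcal{F} \setminus \mathcal{F}_K$ then $\hom(F,K) = 0$, so $\hom(F, G\times K) = 0 = \hom(F, H\times K)$ holds unconditionally; if $F \in \mathcal{F}_K$ then $\hom(F,K)$ is a positive integer, so $\hom(F, G \times K) = \hom(F, H\times K)$ is equivalent to $\hom(F,G) = \hom(F,H)$. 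Ranging over $F \in \mathcal{F}$ gives $G \mathrel{R_K} H \Longleftrightarrow G \equiv_{\mathcal{F}_K} H$. This single short computation, and in particular the fact that $\hom(F,K)>0$ for $F\in\mathcal{F}_K$ which is precisely what licenses the cancellation, is the heart of the argument; everything else is formal.

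Given this, \cref{canc1} unwinds to: for all graphs $G, H$, $G \equiv_{\mathcal{F}_K} H \Rightarrow G \equiv_{\mathcal{F}} H$. Since $\mathcal{F}_K \subseteq \mathcal{F}$ already gives the converse implication, \cref{canc1} is equivalent to the coincidence of the relations $\equiv_{\mathcal{F}}$ and $\equiv_{\mathcal{F}_K}$. I would then observe that $\equiv_{\mathcal{F}} = \equiv_{\mathcal{F}_K}$ if and only if $\cl(\mathcal{F}) = \cl(\mathcal{F}_K)$: the forward direction holds because, by its very definition, $\cl(\mathcal{G})$ is a function of the relation $\equiv_{\mathcal{G}}$ alone; the backward direction uses that $\equiv_{\mathcal{G}}$ and $\equiv_{\cl(\mathcal{G})}$ coincide for every $\mathcal{G}$, whence $\equiv_{\mathcal{F}} = \equiv_{\cl(\mathcal{F})} = \equiv_{\cl(\mathcal{F}_K)} = \equiv_{\mathcal{F}_K}$. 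This settles \cref{canc1} $\Leftrightarrow$ \cref{canc3}.

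Finally, \cref{canc2} $\Leftrightarrow$ \cref{canc3} is a purely formal consequence of $\cl$ being a monotone idempotent closure operator together with $\mathcal{F}_K \subseteq \mathcal{F}$. Indeed, if $\cl(\mathcal{F}) = \cl(\mathcal{F}_K)$ then $\mathcal{F} \subseteq \cl(\mathcal{F}) = \cl(\mathcal{F}_K)$; conversely, from $\mathcal{F} \subseteq \cl(\mathcal{F}_K)$ one gets $\cl(\mathcal{F}) \subseteq \cl(\cl(\mathcal{F}_K)) = \cl(\mathcal{F}_K)$, while $\mathcal{F}_K \subseteq \mathcal{F}$ gives $\cl(\mathcal{F}_K) \subseteq \cl(\mathcal{F})$, so the two closures agree. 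Chaining the three equivalences proves the lemma. I do not anticipate a real obstacle beyond being careful that all relations are considered over the same class of graphs as in the definition of $K$-cancellation.
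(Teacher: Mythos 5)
Your proof is correct and is essentially the paper's own argument: both hinge on the same observation that $G \times K \equiv_{\mathcal{F}} H \times K$ is equivalent to $G \equiv_{\mathcal{F}_K} H$ via \cref{eq:product}, and both then reduce the lemma to closure-operator bookkeeping. The only difference is cosmetic — you establish \cref{canc1}~$\Leftrightarrow$~\cref{canc3} first and then \cref{canc2}~$\Leftrightarrow$~\cref{canc3}, whereas the paper goes \cref{canc1}~$\Leftrightarrow$~\cref{canc2} and then \cref{canc2}~$\Leftrightarrow$~\cref{canc3}.
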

	\begin{proof}
		First observe that for graphs $G$ and $H$ the conditions $G \times K \equiv_{\mathcal{F}} H \times K$ and $G \equiv_{\mathcal{F}_K} H$ are equivalent.
		Indeed, if $G \times K \equiv_{\mathcal{F}} H \times K$ and $F \in \mathcal{F}_K$ then $\hom(F, G) = \hom(F, G \times K)/\hom(F, K) = \hom(F, H)$ and hence $G \equiv_{\mathcal{F}_K} H$.
		Conversely, if $G \equiv_{\mathcal{F}_K} H$ and $F \in \mathcal{F}$ then $\hom(F, G \times K) = \hom(F, G)\hom(F, K) = \hom(F, H) \hom(F, K) = \hom(F, G \times K)$ since $\hom(F, G) = \hom(F, H)$ if $\hom(F, K) \neq 0$.
		
		By the initial observation, \cref{canc1} is equivalent to the assertion that $G \equiv_{\mathcal{F}_K} H$ implies $G \equiv_{\mathcal{F}} H $ for all graphs $G$ and $H$. Hence, \cref{canc1,canc2} are equivalent.
		Since $\mathcal{F} \subseteq \cl(\mathcal{F})$ and $\cl(\mathcal{F}_K) \subseteq \cl(\mathcal{F})$, assertions \cref{canc2,canc3} are equivalent.
	\end{proof}
	
	There is a succinct characterisation of essentially profinite graph classes admitting $K$-cancellation.
	
	\begin{lemma} \label{cor:cancellation-pf}
		Let $\mathcal{F}$ be essentially profinite and let $K$ be a graph.
		Then $\equiv_{\mathcal{F}}$ admits $K$-cancellation if and only if all graphs in $\mathcal{F}$ are $K$-colourable.
	\end{lemma}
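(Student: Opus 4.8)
The plan is to route everything through \cref{lem:cancellation}, which already reduces $K$-cancellation of $\equiv_{\mathcal{F}}$ to the purely combinatorial inclusion $\mathcal{F} \subseteq \cl(\mathcal{F}_K)$. It therefore suffices to show that, for essentially profinite $\mathcal{F}$, this inclusion holds if and only if $\hom(F, K) > 0$ for every $F \in \mathcal{F}$.

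The ``if'' direction is immediate: if all graphs in $\mathcal{F}$ are $K$-colourable then $\mathcal{F}_K = \mathcal{F}$, so $\mathcal{F} \subseteq \cl(\mathcal{F}) = \cl(\mathcal{F}_K)$, and \cref{lem:cancellation} yields $K$-cancellation. For the ``only if'' direction, assume $\equiv_{\mathcal{F}}$ admits $K$-cancellation, so that $\mathcal{F} \subseteq \cl(\mathcal{F}_K)$ by \cref{lem:cancellation}. Now I would invoke essential profiniteness through \cref{lem:pf-col}, which gives $\cl(\mathcal{F}_K) = \cl(\mathcal{F})_K$; since every graph in $\cl(\mathcal{F})_K$ is $K$-colourable by the definition of the subscript notation, every $F \in \mathcal{F}$ is $K$-colourable, as desired.

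The argument is short because its substance is borrowed: the one genuinely non-formal step is the appeal to \cref{lem:pf-col}, whose proof in turn rests on \cref{thm:kwiecien22} (the Kwiecie{\'n}--Marcinkowski--Ostropolski-Nalewaja characterisation). This is precisely where I expect essential profiniteness to be indispensable. Without it, \cref{lem:cl-col} still provides the inclusion $\cl(\mathcal{F})_K \subseteq \cl(\mathcal{F}_K)$, but the reverse inclusion can fail, and then $\equiv_{\mathcal{F}}$ could admit $K$-cancellation while some member of $\mathcal{F}$ fails to be $K$-colourable. So the crux of the matter is recognising that the finiteness condition is exactly what forces the homomorphism distinguishing closure of the class $\mathcal{F}_K$ of $K$-colourable graphs to remain inside the $K$-colourable world.
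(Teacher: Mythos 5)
Your proof is correct and follows essentially the same route as the paper: both directions are routed through Lemma~\ref{lem:cancellation}, and the only-if direction invokes Lemma~\ref{lem:pf-col} (hence essential profiniteness) to land back in the $K$-colourable world. Your explicit unpacking of why $\mathcal{F} \subseteq \cl(\mathcal{F})_K$ forces $\mathcal{F} = \mathcal{F}_K$ is just a slightly more verbose rendering of the paper's one-line conclusion.
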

	\begin{proof}
		By \cref{lem:pf-col}, $\cl(\mathcal{F}_K) \subseteq \cl(\mathcal{F})_K$.
		By \cref{lem:cancellation}, if $\equiv_{\mathcal{F}}$ admits $K$-cancellation then $\mathcal{F} \subseteq \cl(\mathcal{F}_K)$ which implies that $\mathcal{F} = \mathcal{F}_K$.
		The converse is straightforward.
	\end{proof}
	
	Beyond essentially profinite graph classes, it is less clear when $\equiv_{\mathcal{F}}$ admits $K$-cancellation.
	If $K$ is bipartite then this depends solely on whether the graphs in $\mathcal{F}$ are $K$-colourable.

	\begin{lemma} \label{cor:bipartite-exception}
		For every graph class $\mathcal{F}$ and every bipartite graph $K$, $\cl(\mathcal{F}_K) =  \cl(\mathcal{F})_K$.
		In particular, $\equiv_{\mathcal{F}}$ admits $K$-cancellation if and only if all graphs in $\mathcal{F}$ are $K$-colourable.
	\end{lemma}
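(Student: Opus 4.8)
The plan is to combine the inclusion $\cl(\mathcal{F})_K \subseteq \cl(\mathcal{F}_K)$ from \cref{lem:cl-col} with its converse. Since $\mathcal{F}_K \subseteq \mathcal{F}$ already yields $\cl(\mathcal{F}_K) \subseteq \cl(\mathcal{F})$ and $\cl(\mathcal{F})_K$ is by definition the subclass of $K$-colourable graphs in $\cl(\mathcal{F})$, the only thing left is to prove that every graph in $\cl(\mathcal{F}_K)$ is $K$-colourable. I would first pass to the core of $K$: a bipartite $K$ is homomorphically equivalent to $K_2$, to $K_1$, or to the empty graph $K_0$, according to whether it has an edge, a vertex but no edge, or no vertex at all. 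Correspondingly $\mathcal{F}_K$ is contained in the class $\mathcal{B}$ of all bipartite graphs, the class $\mathcal{E}$ of edgeless graphs, or $\{K_0\}$, and in each case $K$-colourability of a graph is equivalent to membership in the respective class. As $\cl$ is monotone, it therefore suffices to establish $\cl(\mathcal{B}) = \mathcal{B}$ — the substantial case — together with the routine facts $\cl(\mathcal{E}) = \mathcal{E}$ and $\cl(\{K_0\}) = \{K_0\}$.

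For $\cl(\mathcal{B}) = \mathcal{B}$ only the inclusion $\cl(\mathcal{B}) \subseteq \mathcal{B}$ requires work. Since $\mathcal{B}$ is closed under deleting edges, \cref{thm:full-complement} shows that $\cl(\mathcal{B})$ is closed under taking subgraphs; as every non-bipartite graph contains an odd cycle as a subgraph, it thus suffices to show that no odd cycle belongs to $\cl(\mathcal{B})$. For each $m \ge 1$ I would use the witness pair $G_m \coloneqq C_{2m+1} + C_{2m+1}$ and $H_m \coloneqq C_{4m+2}$. Their tensor products with $K_2$ agree, namely $G_m \times K_2 \cong C_{4m+2} + C_{4m+2} \cong H_m \times K_2$, because the canonical double cover of an odd cycle $C_n$ is $C_{2n}$ while that of an even cycle $C_n$ is $C_n + C_n$. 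By \eqref{eq:product} this gives $\hom(F, G_m)\hom(F, K_2) = \hom(F, H_m)\hom(F, K_2)$ for every graph $F$, and since $\hom(F, K_2) > 0$ whenever $F$ is bipartite we conclude $G_m \equiv_{\mathcal{B}} H_m$. However $\hom(C_{2m+1}, G_m) = 2\hom(C_{2m+1}, C_{2m+1}) > 0$ by \eqref{eq:disjoint}, whereas $\hom(C_{2m+1}, H_m) = 0$ as $C_{4m+2}$ is bipartite. Hence $C_{2m+1} \notin \cl(\mathcal{B})$, so $\cl(\mathcal{B}) \subseteq \mathcal{B}$.

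The two remaining cases are immediate: $\cl(\{K_0\}) = \{K_0\}$ since $\hom(K_0, -)$ is constant, and $\cl(\mathcal{E}) = \mathcal{E}$ since $\equiv_{\mathcal{E}}$ records nothing beyond the number of vertices, so comparing $K_n$ with the edgeless graph on $n$ vertices for large $n$ rules every graph with an edge out of $\cl(\mathcal{E})$. Assembling the pieces gives $\cl(\mathcal{F}_K) \subseteq \cl(\mathcal{F})_K$ and hence the claimed equality. The final assertion follows by feeding this into \cref{lem:cancellation}: $\equiv_{\mathcal{F}}$ admits $K$-cancellation iff $\mathcal{F} \subseteq \cl(\mathcal{F}_K) = \cl(\mathcal{F})_K$, which, as $\mathcal{F} \subseteq \cl(\mathcal{F})$ anyway, amounts to all graphs in $\mathcal{F}$ being $K$-colourable. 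I expect the crux to be the separating pair $C_{2m+1} + C_{2m+1}$ versus $C_{4m+2}$: it encodes a failure of isomorphism cancellation by the bipartite graph $K_2$ in the sense of Lovász, and it is precisely what makes homomorphism counts from bipartite graphs unable to detect odd cycles.
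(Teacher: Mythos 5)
Your proof is correct, but it takes a genuinely different route from the paper on the central ingredient. The paper simply cites \cite[Lemma~5.8]{roberson_oddomorphisms_2022} for the fact that the class $\mathcal{B}$ of bipartite graphs is homomorphism distinguishing closed, invokes \cref{thm:profinite} for the edgeless case, and then applies \cref{lem:intersection} and \cref{lem:cl-col} directly. You instead prove $\cl(\mathcal{B}) = \mathcal{B}$ from scratch: you use \cref{thm:full-complement} to get subgraph-closure of $\cl(\mathcal{B})$ (correctly avoiding \cref{thm:complement}, since $\mathcal{B}$ is not closed under edge contraction) and then exhibit, for each odd cycle, the explicit witness pair $C_{2m+1} + C_{2m+1}$ versus $C_{4m+2}$, whose bipartite double covers coincide, so that $\equiv_{\mathcal{B}}$ cannot see the distinction while $\hom(C_{2m+1}, -)$ does. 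This is exactly the Lovász-style failure of $K_2$-cancellation and yields a self-contained argument at the cost of some length; the paper's version is shorter but opaque in that the crux is a black-boxed citation. You also treat the degenerate case of a bipartite $K$ with no vertices explicitly, which the paper silently omits. One cosmetic point: the way you derive $\cl(\mathcal{F}_K) \subseteq \cl(\mathcal{F})_K$ (monotonicity plus $K$-colourability of everything in $\cl(\mathcal{F}_K)$) is exactly the specialization of \cref{lem:intersection} to $\mathcal{F} \cap \mathcal{G}_K$ once one knows $\cl(\mathcal{G}_K) = \mathcal{G}_K$, so the two proofs converge once your self-contained lemma is in place.
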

	\begin{proof}
		By \cite[Lemma~5.8]{roberson_oddomorphisms_2022}, the class $\mathcal{G}_{K_2}$ of all bipartite graphs is homomorphism distinguishing closed.
		By \cref{thm:profinite}, so is $\mathcal{G}_{K_1}$, the family of all graphs which are $K_1$-colourable, i.e.\@ the edge-less graphs.
		Hence, for $i \in \{1,2\}$, by \cref{lem:intersection},
		\[
		\cl(\mathcal{F}_{K_i}) = \cl(\mathcal{F} \cap \mathcal{G}_{K_i}) \subseteq \cl(\mathcal{F}) \cap \cl(\mathcal{G}_{K_i}) = \cl(\mathcal{F}) \cap \mathcal{G}_{K_i} = \cl(\mathcal{F})_{K_i}.
		\]
		The converse containment follows from \cref{lem:cl-col}.
		It remains to observe that if $K$ is bipartite then $\mathcal{F}_K$ equals either $\mathcal{F}_{K_1}$ or $\mathcal{F}_{K_2}$, depending on whether $K$ is edge-less.
	\end{proof}

	Under the additional assumption that $\mathcal{F}$ is closed under subdivisions, the following \cref{prop:subdiv} completes the characterisation of the graphs $K$ for which $\equiv_{\mathcal{F}}$ admits $K$-cancellation.
	For example, the quantum isomorphism relation $\equiv_{\mathcal{P}}$ \cite{mancinska_quantum_2019} and $k$-variable counting logic equivalence \cite{dvorak_recognizing_2010} admit $K$-cancellation if and only if $K$ is non-bipartite.
	\Cref{prop:subdiv} strengthens Lov\'asz' result \cite{lovasz_cancellation_1971} using an argument from \cite{dvorak_recognizing_2010}.
	
	\begin{theorem} \label{prop:subdiv}
		If $\mathcal{F}$ is closed under subdivisions then $\mathcal{F} \subseteq \cl(\mathcal{F}_{K})$ for every non-bipartite graph $K$.
		In particular, $\equiv_{\mathcal{F}}$ admits $K$-cancellation.
	\end{theorem}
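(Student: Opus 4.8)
The plan is to prove the core assertion $\mathcal{F} \subseteq \cl(\mathcal{F}_{K})$; the statement about $K$-cancellation then follows immediately from \cref{lem:cancellation} (implication \cref{canc2} $\Rightarrow$ \cref{canc1}). So I would fix $F \in \mathcal{F}$ together with simple graphs $G, H$ satisfying $G \equiv_{\mathcal{F}_K} H$, and aim to show $\hom(F, G) = \hom(F, H)$. The main device is the \emph{$s$-subdivision} $F^{(s)}$ of $F$, obtained by replacing every edge by a path with $s$ edges, so that $F^{(1)} = F$ and $F^{(s)} \in \mathcal{F}$ for every $s \geq 1$ since $\mathcal{F}$ is closed under subdivisions. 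The first step is to use non-bipartiteness of $K$ to produce a threshold $s_0 \in \mathbb{N}$, depending only on $K$, such that $F^{(s)} \to K$ for all $s \geq s_0$: picking a non-bipartite connected component $K'$ of $K$ and a vertex $x_0 \in V(K')$, the vertex $x_0$ lies on a closed walk of length $2$ and on one of some odd length $q$, hence — as $\gcd(2,q)=1$ — on a closed walk of every length $s \geq q-1$; mapping all original vertices of $F$ to $x_0$ and routing each subdivided edge along such a closed walk yields the homomorphism, with $s_0 \coloneqq \max\{1, q-1\}$. Consequently $F^{(s)} \in \mathcal{F}_K$, and therefore $\hom(F^{(s)}, G) = \hom(F^{(s)}, H)$ for all $s \geq s_0$.

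The second step analyses the dependence on $s$. A homomorphism $F^{(s)} \to G$ amounts to a map $\phi\colon V(F) \to V(G)$ together with, for each $uv \in E(F)$, a walk of length $s$ in $G$ from $\phi(u)$ to $\phi(v)$; hence $\hom(F^{(s)}, G) = \sum_{\phi} \prod_{uv \in E(F)} (A^s)_{\phi(u)\phi(v)}$ with $A \coloneqq A(G)$. Since $A$ is real symmetric, $A^s = \sum_{\lambda} \lambda^s P_\lambda$ over the finitely many eigenvalues $\lambda$ of $A$ with spectral projections $P_\lambda$; expanding the product over edges and grouping the resulting terms by the value of the product of the chosen eigenvalues yields, for all $s \geq 1$, an identity $\hom(F^{(s)}, G) = \sum_{\mu \in P_G} \mu^{s}\, d_\mu(G)$ with a finite set $P_G \subseteq \mathbb{R}$ and coefficients $d_\mu(G)$ not depending on $s$; in particular at $s = 1$ this reads $\hom(F, G) = \sum_{\mu \in P_G} \mu\, d_\mu(G)$. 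Performing the same computation for $H$ and combining over the finite set $P \coloneqq P_G \cup P_H$ (extending the coefficients by $0$), one obtains $\hom(F^{(s)}, G) - \hom(F^{(s)}, H) = \sum_{\mu \in P} \mu^{s} e_\mu$ for all $s \geq 1$, where $e_\mu \coloneqq d_\mu(G) - d_\mu(H)$, and by the first step this vanishes for all $s \geq s_0$.

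Evaluating this at $s = s_0, s_0+1, \dots, s_0 + |P| - 1$ gives a homogeneous linear system whose matrix, after discarding the row for $\mu = 0$ (which is zero since $s_0 \geq 1$), is a diagonal matrix times a Vandermonde matrix in the distinct nonzero elements of $P$, hence invertible; therefore $e_\mu = 0$ for every nonzero $\mu \in P$. Specialising the identity to $s = 1$ then yields $\hom(F, G) - \hom(F, H) = \sum_{\mu \in P} \mu\, e_\mu = 0$, since the $\mu = 0$ term vanishes as well. This shows $F \in \cl(\mathcal{F}_K)$, and since $F \in \mathcal{F}$ was arbitrary, $\mathcal{F} \subseteq \cl(\mathcal{F}_K)$.

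I expect the main obstacle to be precisely the $s = 1$ extrapolation: the subdivision $F^{(1)} = F$ is essentially never $K$-colourable, so the equality of homomorphism counts is available only for the large subdivisions $F^{(s)}$ with $s \geq s_0$, and the crux is that this propagates back down to $F$ itself — which is exactly what the spectral expansion together with the Vandermonde argument delivers (this is the argument adapted from \cite{dvorak_recognizing_2010}, strengthening Lovász' cancellation theorem \cite{lovasz_cancellation_1971}). A secondary technical point is that the colourability threshold $s_0$ must be uniform in $F$; this is why it is convenient to route all subdivided edges through a single vertex $x_0$ of a non-bipartite component of $K$.
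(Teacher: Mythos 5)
Your proof is correct. It shares its engine with the paper's proof --- a linear-independence argument on the functions $s \mapsto \mu^s$ that forces equality of homomorphism counts from long subdivisions to propagate back down to short ones --- but the packaging is genuinely different. The paper argues by contrapositive and outsources the analytic core to a citation of \cite[Lemma~11]{dvorak_recognizing_2010}, which lengthens a \emph{single} edge at a time; one must then iterate over edges (this iteration is left rather implicit in the paper) until the resulting subdivision has odd girth large enough to be $C_{2n+1}$-colourable. You instead subdivide every edge simultaneously by the same amount $s$, so that $F^{(s)} \to K$ holds uniformly for all $s \geq s_0$ with one threshold $s_0$ depending only on $K$, and a single application of the spectral expansion $\hom(F^{(s)},-)=\sum_\mu \mu^s d_\mu(-)$ together with one Vandermonde step closes the argument directly. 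This is fully self-contained, avoids the edge-by-edge iteration, and handles the extrapolation to $s=1$ cleanly by observing that the $\mu=0$ contribution vanishes identically for $s\geq 1$. The paper's route buys brevity by delegating the technical work; yours buys transparency and a uniform treatment. One small slip of wording: you say ``discarding the row for $\mu=0$'' when you mean the \emph{column} --- the rows of the linear system are indexed by $s$ and the columns by $\mu$ --- but the argument is unaffected.
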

	\begin{proof}
		The proof is by showing the contrapositive, i.e.\@ for graphs $G$ and $H$, if $G \not\equiv_{\mathcal{F}} H$  then $G \not\equiv_{\mathcal{F}_K} H$.
		If there is $F \in \mathcal{F}$ without edges such that $\hom(F, G) \neq \hom(F, H)$ then $G \not\equiv_{\mathcal{F}_K} H$ because $F \in \mathcal{F}_K$.
		Hence, it may be supposed that there is $F \in \mathcal{F}$ with edge $e \in E(F)$ such that $\hom(F, G) \neq \hom(F, H)$.
		By~\cite[Lemma~11]{dvorak_recognizing_2010}, there exists a graph $F'$ obtained from $F$ by replacing $e$ by a path of length at least three such that $\hom(F', G) \neq \hom(F', H)$.
		Since $K$ is non-bipartite, it contains an odd cycle $C_{2n+1}$ for some $n \in \mathbb{N}$.
		By subdividing sufficiently many edges, any graph can be turned into a $C_{2n+1}$-colourable graph and any such graph is $K$-colourable. Hence, $G \not\equiv_{\mathcal{F}_K} H$. 
		By \cref{lem:cancellation}, this implies that $\mathcal{F} \subseteq \cl(\mathcal{F}_K)$.
	\end{proof}

	\section{Conclusion}
	
	The main technical contribution of this work is a characterisation of closure properties of graph classes $\mathcal{F}$ in terms of preservation properties of their homomorphism indistinguishability relations $\equiv_{\mathcal{F}}$, cf.\@ \cref{tab:overview}.
	In consequence, a surprising connection between logical equivalences and homomorphism indistinguishability over minor-closed graph classes is established.
	In this way, results from graph minor theory are made available to the study of the expressive power of logics on graphs.
	Finally, a full classification of the homomorphism distinguishing closed graph classes which are essentially profinite is given. 
	Various open questions of \cite{roberson_oddomorphisms_2022} are answered by results clarifying the properties of homomorphism indistinguishability relations and of the homomorphism distinguishing closure.
	
	It is tempting to view the results in \cref{tab:overview} as instances of a potentially richer connection between graph-theoretic properties of $\mathcal{F}$ and \emph{polymorphisms} of $\equiv_{\mathcal{F}}$, i.e.\@ isomorphism-invariant maps $\mathfrak{f}$ sending tuples of graphs to graphs such that $\mathfrak{f}(G_1, \dots, G_k) \equiv_{\mathcal{F}} \mathfrak{f}(H_1, \dots, H_k)$ whenever $G_i \equiv_{\mathcal{F}} H_i$ for all $i \in [k]$.
	Recalling the algebraic approach to CSPs, cf.\@ \cite{barto_polymorphisms_2017}, one may ask what structural insights into $\mathcal{F}$ can be gained by considering polymorphisms of $\equiv_{\mathcal{F}}$.
	More concretely, can closure under topological minors be characterised in terms of some polymorphism?
	
	\paragraph*{Acknowledgements}
	I would like to thank David E.\@ Roberson, Louis Härtel, Martin Grohe, Gaurav Rattan, and Christoph Standke for fruitful discussions.
	
	I was supported by the German Research Foundation (\textsmaller{DFG}) within Research Training Group 2236/2 (\textsmaller{UnRAVeL}) and by the European Union (\textsmaller{ERC}, SymSim, 101054974).
	
	Views and opinions expressed are however those of the author(s) only and do not necessarily reflect those of the European Union or the European Research Council Executive Agency. Neither the European Union nor the granting authority can be held responsible for them.

\end{document}